\DeclareMathSymbol{\subsetneqq}{\mathbin}{AMSb}{36}
\newcommand{\R}{\mathbb{R}}
\newcommand{\N}{\mathbb{N}}
\newcommand{\C}{\mathbb{C}}
\newcommand{\cL}{\mathcal L}
\newcommand{\dint}{\displaystyle\int}
\newcommand{\dsup}{\displaystyle\sup}
\newcommand{\dinf}{\displaystyle\inf}
\newcommand{\beq}{\begin{eqnarray}}
\newcommand{\eeq}{\end{eqnarray}}
\newcommand{\bq}{\begin{equation}}
\newcommand{\eq}{\end{equation}}
\newcommand{\beqn}{\begin{eqnarray*}}
\newcommand{\eeqn}{\end{eqnarray*}}
\newcommand{\bex}{\begin{exo}}
\newcommand{\eex}{\end{exo}}
\newcommand{\ben}{\begin{enumerate}}
\newcommand{\een}{\end{enumerate}}
\newtheorem{th1}{{\bf Theorem}}[section]
\newtheorem{thm}[th1]{{\bf Theorem}}
\newtheorem{lem}[th1]{{\bf Lemma}}
\newtheorem{prop}[th1]{{\bf Proposition}}
\newtheorem{rem}[th1]{\bf Remark}
\newtheorem{defi}[th1]{\bf Definition}
\author[S.~Ibrahim]{Slim Ibrahim}
\address{Department of Mathematics and Statistics,\\University of Victoria\\
 PO Box 3060 STN CSC\\   Victoria, BC, V8P 5C3\\ Canada}
\email{\sl ibrahim@math.uvic.ca}
\urladdr{ http://www.math.uvic.ca/~ibrahim/}
\thanks{{\sf S. Ibrahim} is partially supported by NSERC\# 371637-2009 grant and
a start up fund from the University of Victoria.}
\author[R. Jrad]{Rym Jrad}
\address{University Tunis ElManar,
Faculty of Sciences of Tunis, Department of Mathematics.}
\email{\sl rym.jrad@gmail.com}
\author[M. Majdoub]{Mohamed Majdoub}
\address{University Tunis ElManar,
Faculty of Sciences of Tunis, Department of Mathematics.}
\email{\sl mohamed.majdoub@fst.rnu.tn}
\author[T. Saanouni]{Tarek Saanouni}
\address{University Tunis ElManar,
Faculty of Sciences of Tunis, Department of Mathematics.}
\email{\sl tarek.saanouni@ipeiem.rnu.tn}
\thanks{{\sf R. Jrad, M. Majdoub \& T. Saanouni} are grateful to the Laboratory of
PDE and Applications at the Faculty of Sciences of Tunis.}
\keywords{Nonlinear heat equation, Existence, Uniqueness, Moser-Trudinger  inequality, Orlicz space, ...}
\title[Well posedness and unconditional uniqueness ...]{Well posedness and unconditional non uniqueness for a 2D semilinear heat equation%The Cauchy problem for a two dimensional semi-linear heat equation with exponential growth
}
\date{\today}
\begin{document}
\begin{abstract}
We investigate the initial value problem for a semilinear heat equation with exponential-growth nonlinearity in two space dimension. First, we prove the local existence and unconditional uniqueness of solutions in the Sobolev space $H^1(\R^2)$. The uniqueness part is non trivial although it follows Brezis-Cazenave's proof \cite{Br} in the case of monomial nonlinearity in dimension $d\geq3$. Next, %Following Caffarelli-Vasseur \cite{cv},
we show that in the defocusing case our solution is bounded, and therefore exists for all time. In the focusing case, we prove that any solution with negative energy blows up in finite time. Lastly, we show that the unconditional result is lost
once we slightly enlarge the Sobolev space $H^1(\R^2)$. The proof consists in constructing a singular stationnary solution that will gain some regularity when it serves as initial data in the heat equation. The Orlicz space appears to be appropriate for this result since, in this case, the potential term is only an integrable function.
\end{abstract}
%@@@@@@@@@@@@@@@@@@@@@@@@@@@@@@@@@@@@%@@@@@@@@@@@@@@@@@@@@@@@@@@@@@@@@@@@@%@@@@@@@@@@@@@@

%@@@@@@@@@@@@@@@@@@@@@@@@@@@@@@@@@@@@%@@@@@@@@@@@@@@@@@@@@@@@@@@@@@@@@@@@@%@@@@@@@@@@@@@@

\maketitle

%\tableofcontents
\vspace{ 1\baselineskip}
\renewcommand{\theequation}{\thesection.\arabic{equation}}

\section{Introduction}
Consider the initial value problem for a semilinear heat equation
\begin{equation}
\label{eq1}
\left\{
\begin{matrix}
\partial_t u=\Delta u+ f(u)\\
u(0)= u_0\\
\end{matrix}
\right.
\end{equation}
where $u(t,x) : \R^+\times\R^d\to\R$, $d\geq2$ and $f\in{\mathcal C}^1(\R,\R)$ is a given function satisfying $f(0)=0$. The Cauchy problem \eqref{eq1} has been extensively studied in the scale of Lebesgue spaces $L^q$, especially for polynomial type nonlinearities i.e
\begin{equation}
\label{mono}
f(u):= \pm |u|^{\gamma-1}u,\quad \gamma>1.
\end{equation}
In such a case, observe that the equation enjoys an interesting property of scaling invariance
\begin{equation}
\label{scaling}
u_\lambda(t,x):=\lambda^{2/\gamma}\,u(\lambda^2 t, \lambda x), \quad \lambda>0
\end{equation}
i.e. if $u$ solves \eqref{mono} then also does $u_\lambda$. The Lebesgue space $L^{q_c}(\R^d)$ with index $q_c:=\frac{d(\gamma-1)}{2}$ is also the only one invariant under the same scaling \eqref{scaling}. This property defines a sort of  trichotomy in the dynamic of solutions of \eqref{mono}, and basically one can notice the following three different regimes:\\

{\sc {The subcritical case i.e. $q> q_c\geq1$:}} Weissler in \cite{Ws1} proved the existence of a unique solution $u\in {\mathcal C}([0,T); L^q(\R^d))\cap L_{loc}^\infty(]0,T]; L^\infty(\R^d))$. Later on, Brezis-Cazenave \cite{Br} proved the unconditional uniqueness of Weissler's solutions.\footnote{Uniqueness in the natural space where solutions exist, namely ${\mathcal C}(L^q)$.}\\

{ {\sc The critical case} i.e. $q= q_c$ and $d\geq 3$:} There are two sub-cases:
\begin{itemize}
\item
If $q_c>\gamma+1$, then we have local wellposedness of the Cauchy problem where the existence is also due to  Weissler \cite{Ws1} and the {unconditional uniqueness} to Brezis-Cazenave \cite{Br}.\\
\item
If $q= q_c=\gamma+1$ or equivalently $q={\frac{d}{d-2}}$ and $\gamma-1={\frac{2}{d-2}}$ (double critical or energy critical case\footnote{Observe that in such a case, the potential energy term is finite.}): Weissler \cite{Ws2} proved the conditional wellposedness. When the underlying space is the unit ball of $\R^d$, Ni-Sacks \cite{NiSa} showed that the unconditional uniqueness fails. This result was extended to the whole space by E. Terraneo \cite{T} for suitable intial data. See also \cite{MaTerr} for general initial data.
\end{itemize}

\vspace{0.5cm}

{\sc{The supercritical case i.e. $q<q_c$:}} there are indications that there exists no (local) solution in any reasonable weak sense ({\rm cf.} \cite{Br, Ws1, Ws2}). Moreover, it is known that uniqueness is lost for the initial data $u_0=0$ and for $1+\frac{1}{d}<\gamma<\frac{d+2}{d-2}$, see Haraux-Weissler \cite{HW}.\\

The way in constructing solutions consists in using a fixed point argument in suitable spaces where the free solution lives and the nonlinear terms can be estimated using the heat regularizing properties. Note that the solution can be written as
$$
u(t)={\rm e}^{t\Delta}u_0+M(u)(t),
$$
where the integral operator $M(u)(t):=\int_0^t{\rm e}^{(t-s)\Delta}f(u(s))\;ds$. This operator behaves differently in the sub and critical cases. It is clearly continuous in ${\mathcal C}([0,T);L^q(\R^d))$ when the nonlinearity is subcritical, while it is discontinuous in the critical case (see \cite{NiSa} for more details).

In the energy critical case, the nice idea of Ni and Sacks \cite{NiSa} to prove the non-uniqueness is constructive and based on the fact that the Poisson equation does not regularize as much as the heat equation when the source term is only an integrable function. In the energy critical case, the potential term $|u|^{\gamma-1}u\in L^\infty_t(L^1)$. So, Ni and Sacks constructed a singular stationary solution in the punctured unit ball. The singularity holds only at the center of the ball and is weak enough to extend the singular solution (in the distributional sense) to the whole ball. Then, they constructed a local solution which will immediately enjoy a smoothing effect that the stationary singular solution will never have. This makes the two solutions different and the unconditional non-uniqueness immediately follows. Let us mention that the well posedness in Sobolev and Besov spaces was investigated in \cite{Rib, MI}.\\

In two space dimension, observe that the energy\footnote{i.e. $q_c=\gamma+1$.} scaling index  $q_c={\frac{d}{d-2}}$ becomes infinite. So any power nonlinearity $1<\gamma<\infty$ is subcritical in the sense that one can always choose a Lebesgue space $L^q$ (other than $L^\infty$) where one can prove the well-posedness for the Cauchy problem \eqref{eq2}. However, when taking an infinite polynomial e.g exponential nonlinearity, the only Lebesgue space in which Weissler's result is applicable is $L^\infty$. To this extent, the Cauchy problem \eqref{eq2} is always subcritical in $L^\infty$ and one can wonder if there is any notion of criticality in two space dimension. The loss of the scaling property for inhomogeneous nonlinearities also does not help in having any insight toward an answer.\\

 The aim of this paper is to show that in $2D$, a kind of trichotomy (similar to the one described above in higher dimensions ) can still be defined. It is based on the topology of the initial data. More precisely, consider the Cauchy problem
\begin{equation}
\label{eq2}
\left\{
\begin{matrix}
\partial_t u-\Delta u=\pm u({\rm e}^{ {u}^2}-1)\quad\mbox{in}\quad \R^2\\\\
u(0)= u_0\,.\\
\end{matrix}
\right.
\end{equation}
Our first goal in this paper is to study whether or not there exists local/global solution to the Cauchy problem \eqref{eq2} when the data is no longer in $L^\infty$. \\

First, observe that for an exponential nonlinearity, the largest Lebesgue type space in which the equation is meaningful in the distributional sense is of Orlicz kind. In this respect, Ruf and Terraneo \cite{RT} showed a local existence result for {\it small} initial data in Orlicz space in four space dimension. In what follows, we will focus our attention only to the case $d=2$.\\
Recall that the Sobolev space $H^{1}(\R^2)$ is embedded in all Lebesgue spaces $L^p$ for every $2\leq p<\infty$ but not in $L^\infty$. The optimal (critical) Sobolev embedding is known to be
 \bq
 \label{Embed}
 H^{1}(\R^2)\hookrightarrow {\cL}(\R^2),
 \eq
 where ${\cL}(\R^2)$ is the Orlicz space associated to the function $\phi(s)={\rm e}^{s^2}-1$ (see next section for the precise definition of Orlicz space). The embedding \eqref{Embed}  is sharp within the context of Orlicz spaces in the sense that the target
space $\cL$ cannot be replaced by an essentially smaller Orlicz space. In addition, note that initial data in $H^1(\R^2)$ or ${\cL}(\R^2)$ are not necessarily bounded functions and therefore, Weissler's result for the local existence of solutions does not apply.\\
Second, we will show that we have a ``good" $H^1$ theory for the Cauchy problem \eqref{eq2} i.e. finite time/global existence of solutions (depending on the sign of the nonlinearity), and unconditional uniqueness.\\
Finally, once we enlarge a little bit the space of data by taking them in ${\cL}(\R^2)$, we show that the unconditional uniqueness is lost. This space is quite natural for this result because the potential term is only an $L^1$ function.\\
Our results show that even though there is no a scaling property for this problem, a sort of trichotomy analogous to the one described in higher dimension can still be defined. It is based on the topology of the initial data. In a forthcoming paper, we show the non-existence of solutions of the Cauchy problem \eqref{eq2} if the initial data is in the Sobolev space $H^s(\R^2)$ with $s<1$.\\

This paper is organized as follows. In the next section, we state our main results. In Section 3, we recall some basic definitions and auxiliary lemmas. The fourth section deals with the $H^1$ regularity regime. Section 5 is devoted to the Orlicz regularity data.\\

Finally, we mention that $C$ will be used to denote a constant which may vary from line to line.
We also use $A\lesssim B$ to denote an estimate of the form $A\leq C B$
for some absolute constant $C$ and $A\approx B$ if $A\lesssim B$ and $B\lesssim A$.

 %%%%%%%%%%%%%%%%%%%%%%%%%%%%%%%%%%%%%%%%%%%%%%%%%%%%%%%%%%%%%%%%%%%%%%%%%%%
 \section{Main results}
%%%%%%%%%%%%%%%%%%%%%%%%%%%%%%%%%%%%%%%%%%%%%%%%%%%%%%%
First, we prove that without any restriction on the size of the initial data, the Cauchy problem \eqref{eq2} is locally well-posed in the Sobolev space $H^1(\R^2)$. To do so, we use a standard fixed point argument. The uniqueness part is non trivial and follows the steps of Brezis-Cazenave's proof \cite{Br} in the case of monomial nonlinearity in dimension $d\geq3$. Following Caffarelli-Vasseur \cite{cv} and using the energy estimate, we prove that in the defocusing case our solution is bounded. Hence by a standard blow-up
criterion (see for example \cite{Br}) the solution extends to a global one. Proceeding in the same way as in \cite{Tan}, we show that in the focusing case that any solution to \eqref{eq2} with an initial data with a negative energy blows up in finite time.
Recall that the energy is given by
$$
J(u(t)):=\frac12\|\nabla u(t)\|_{L^2(\R^2)}^2-\int_{\R^2}F(u(t))\;dx, \quad\mbox{with}\quad F(u)=\int_0^u\,f(v)\,dv\,.
$$
Our first main result can be stated as follows.
 \begin{thm}
\label{MainH1}
Let $u_0\in H^1(\R^2)$.
\begin{enumerate}
\item[1)]There exists a {\it unique} $u$ solution to \eqref{eq2} in ${\mathcal C}([0,T]; H^1)$.
\item[2)] If $f(u)=-u({\rm e}^{  u^2}-1)$, then the (above) solution is global.
\item[3)] If $f(u)=u({\rm e}^{u^2}-1)$, then a data $u_0\neq0$ with $J(u_0)\leq0$ gives a unique solution blowing up in finite time.
\end{enumerate}
\end{thm}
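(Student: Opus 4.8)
The plan is to establish the three parts in order, treating local well-posedness first and then leveraging it for the global/blow-up dichotomy. For part 1), I would run a standard contraction-mapping argument for the Duhamel formulation $u(t)={\rm e}^{t\Delta}u_0+M(u)(t)$ in the space ${\mathcal C}([0,T];H^1)$. The key analytic input is that the nonlinearity $f(u)=\pm u({\rm e}^{u^2}-1)$ and its derivative obey estimates of the form $|f(u)|\lesssim |u|({\rm e}^{\kappa u^2}-1)$, and that such exponential-type quantities are controlled in $L^p(\R^2)$ for every finite $p$ by $\|u\|_{H^1}$ via the Moser–Trudinger embedding \eqref{Embed}; combining this with the $L^q\to H^1$ smoothing of the heat semigroup $\|{\rm e}^{t\Delta}g\|_{H^1}\lesssim t^{-1/2+\cdots}\|g\|_{L^q}$ makes $M$ a contraction on a small ball for $T$ small. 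For \emph{unconditional} uniqueness one cannot simply subtract two solutions in the contraction norm; instead, following Brezis–Cazenave \cite{Br}, I would take two solutions $u,v\in{\mathcal C}([0,T];H^1)$ with the same data, write the equation for $w=u-v$, and estimate $w$ in a lower-integrability norm (e.g. $L^2$ or a weighted/time-decaying norm) where the loss from the exponential nonlinearity is absorbed using that $u,v$ are \emph{a priori} in ${\mathcal C}([0,T];H^1)\subset{\mathcal C}([0,T];{\cL})$, so the multiplier ${\rm e}^{\theta(|u|^2+|v|^2)}$ lies in every $L^p$; a Gronwall argument on a short interval then forces $w\equiv0$, and the interval can be reopened since the $H^1$ bound is uniform. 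I expect this uniqueness step to be the main obstacle, exactly as flagged in the abstract: the exponential nonlinearity is not Lipschitz on $H^1$-bounded sets in any uniform way, so one must be careful to choose the auxiliary norm and exponent so that the Moser–Trudinger constant stays subcritical over the (short) time interval.

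For part 2) (defocusing, $f(u)=-u({\rm e}^{u^2}-1)$), note that $F(u)=-\frac12({\rm e}^{u^2}-1-u^2)\le 0$, so the energy $J(u(t))=\frac12\|\nabla u(t)\|_{L^2}^2-\int F(u)\,dx\ge \frac12\|\nabla u(t)\|_{L^2}^2\ge 0$ and is non-increasing in $t$ (multiply the equation by $u_t$ and integrate). Hence $\|\nabla u(t)\|_{L^2}\le \sqrt{2J(u_0)}$ for all $t$ in the interval of existence; combined with the $L^2$ bound coming from $\frac{d}{dt}\frac12\|u\|_{L^2}^2=-\|\nabla u\|_{L^2}^2-\int u^2({\rm e}^{u^2}-1)\,dx\le 0$, one gets a uniform $H^1$ bound. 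Following Caffarelli–Vasseur \cite{cv}, this uniform $H^1$ control plus the Moser–Trudinger inequality upgrades to an $L^\infty_{t,x}$ (or at least $L^\infty_t L^p_x$ for all $p$) bound on $u$, which feeds the standard blow-up criterion of \cite{Br}: if the maximal time $T^*$ were finite, $\limsup_{t\to T^*}\|u(t)\|_{H^1}=\infty$, contradicting the uniform bound. Therefore $T^*=\infty$.

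For part 3) (focusing, $f(u)=u({\rm e}^{u^2}-1)$, $u_0\neq0$, $J(u_0)\le0$), I would use the classical concavity/Levine-type argument adapted as in \cite{Tan}. Set $M(t):=\frac12\|u(t)\|_{L^2}^2$; then $M'(t)=-\|\nabla u\|_{L^2}^2+\int u^2({\rm e}^{u^2}-1)\,dx = -2J(u(t))+\big(\int u^2({\rm e}^{u^2}-1)\,dx-2\int F(u)\,dx\big)$. Since $F(u)=\frac12({\rm e}^{u^2}-1-u^2)$, the bracketed term is $\int\big(u^2({\rm e}^{u^2}-1)-({\rm e}^{u^2}-1-u^2)\big)\,dx\ge \int\big((\text{something})\ge 2u^2\cdot\frac{?}{}\big)$ — more precisely one checks the pointwise inequality $s({\rm e}^{s}-1)\ge(2+\varepsilon)({\rm e}^{s}-1-s)$ for $s\ge0$ fails globally, so instead one shows $M'(t)\ge -4J(u(t))\ge -4J(u_0)\ge 0$ using monotonicity of $J$, together with a differential inequality of the form $M''M-(1+\alpha)(M')^2\ge0$ (obtained by differentiating $M'$, using the equation again, and Cauchy–Schwarz $ (\int u u_t)^2\le \|u\|_{L^2}^2\|u_t\|_{L^2}^2$). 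Standard concavity then shows $M(t)^{-\alpha}$ reaches $0$ in finite time, i.e. $\|u(t)\|_{L^2}\to\infty$, so the solution cannot be continued; uniqueness on each existence interval comes from part 1). The delicate point here is verifying the convexity inequality for the specific exponential nonlinearity on the full range, which is where the explicit form of $F$ is used; when $J(u_0)=0$ one additionally invokes $u_0\neq0$ to guarantee $M'(0)>0$ so that the concavity argument has a nontrivial initial slope.
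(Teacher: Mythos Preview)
Your outline has the right skeleton, but the two hardest steps (existence and unconditional uniqueness) do not close as written, and the paper's arguments differ substantively. For \emph{existence}, a direct contraction in $\mathcal C([0,T];H^1)$ fails for large data: the Moser--Trudinger bound $\int({\rm e}^{\alpha u^2}-1)\lesssim_\alpha\|u\|_{L^2}^2$ requires $\alpha\|\nabla u\|_{L^2}^2<4\pi$, so on an $H^1$-ball of large radius the exponential factors are \emph{not} uniformly controlled and $M$ is not a contraction for any $T>0$. The paper instead decomposes $u_0=S_Nu_0+(I-S_N)u_0$, solves first for the bounded part $S_Nu_0$ in $\mathcal C_T(H^1\cap L^\infty)$ (where ${\rm e}^{u^2}$ is trivially bounded), and then runs a perturbative fixed point for the small-$H^1$ remainder, where Moser--Trudinger does apply. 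For \emph{uniqueness}, the same non-uniformity obstructs your Gronwall idea: one cannot bound $\int(f(u)-f(v))w$ by $C(\sup_t\|u\|_{H^1},\sup_t\|v\|_{H^1})\|w\|_{L^2}^2$, since ${\rm e}^{u^2}-1$ lies in every $L^p$ but with a constant depending on $u$ itself, not on its $H^1$ norm. The paper's argument is a duality one: writing $w_t-\Delta w=aw$ with $a=(f(u)-f(v))/w$, they first prove $a\in\mathcal C([0,T];L^2)$ (this \emph{continuity in time}, via Proposition~\ref{int1}, is the crux and is not obvious), then test against solutions of the backward problem $-\partial_tv_n-\Delta v_n=a_nv_n$ with truncated potential and pass to the limit (Lemma~\ref{un}). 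This is the Brezis--Cazenave mechanism and is not a disguised Gronwall.

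For part~2), a uniform $H^1$ bound alone does not give continuation here, since the local existence time from the paper's proof depends on $u_0$ through $N$ and $\|S_Nu_0\|_{L^\infty}$, not only on $\|u_0\|_{H^1}$; the paper obtains an actual $L^\infty$ bound $\|u(t)\|_{L^\infty}\lesssim\|u_0\|_{L^2}$ by a De~Giorgi level-set iteration (Proposition~\ref{lm0}) using only $uf(u)\le0$, and then invokes the $L^\infty$ blow-up criterion. For part~3), the Levine argument for parabolic equations uses the \emph{time-integrated} quantity $y(t)=\tfrac12\int_0^t\|u(s)\|_{L^2}^2\,ds$, not $M(t)=\tfrac12\|u(t)\|_{L^2}^2$: then $y''=-\|\nabla u\|^2+\int uf(u)$, and combining the energy identity $J(t)=J(0)-\int_0^t\|u_t\|^2$ with the pointwise inequality $uf(u)-2F(u)\ge\varepsilon F(u)$ (this is exactly where removing the quadratic term matters) gives $y''\ge(2+\varepsilon)\int_0^t\|u_t\|^2$; Cauchy--Schwarz in $L^2_{t,x}$ yields $(y'(t)-y'(0))^2\le 2y(t)\int_0^t\|u_t\|^2$, hence $yy''\ge(1+\alpha)(y')^2$ for large $t$. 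Your version with $M$ would require $M''$, which involves $u_{tt}$ and does not produce the needed inequality.
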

\begin{rem}
The first assertion of the above Theorem remains true for $f(u)=\pm u{\rm e}^{  u^2}$, and the second one also extends to the case $f(u)=- u{\rm e}^{  u^2}$. This means that we only need to remove the quadratic term from the nonlinearity only for the blow up result.
\end{rem}

The previous Theorem shows that the $H^1$ regularity supports well the exponential nonlinearity. That is why we have obtained a ``good" $H^1$-theory. Now, we enlarge that space a little bit so that \eqref{eq2} is still meaningful in the distributional sense, and we investigate the well posedness of \eqref{eq2} in the Orlicz space $\mathcal L$ which is larger than the usual Sobolev space $H^1(\R^2)$. First, we improve the result of Ruf and Terraneo \cite{RT} by showing the local existence of solutions. Second, we give a non uniqueness result in the Orlicz space based on a construction of a singular solution to the associated elliptic problem.

\begin{thm}\label{MainOrlicz}
Let $B_1$ be the unit ball of $\R^2$.
There exists infinitely many $u_0\in \mathcal{L}(B_1)$ such that the Cauchy problem

\begin{eqnarray}
\label{eqDomain}
\left\{
\begin{matrix}
\partial_t u=\Delta u+u({\rm e}^{u^2}-1)\quad \mbox{in}\quad B_1\\\\
u_{t=0}=u_0\quad \mbox{in}\quad B_1\\\\
u_{|\partial B_1}=0\quad \mbox{for}\quad t>0
\end{matrix}
\right.
\end{eqnarray}
with data $u_0$ has at least two (distinct) solutions.
\end{thm}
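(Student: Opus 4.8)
The plan is to follow the Ni--Sacks strategy adapted to the exponential nonlinearity. The first step is to construct a singular stationary solution of the elliptic problem $-\Delta \varphi = \varphi({\rm e}^{\varphi^2}-1)$ in $B_1\setminus\{0\}$ with $\varphi_{|\partial B_1}=0$. Working with radial functions and the ansatz $\varphi(x) = \sqrt{-2\log|x|}$ near the origin (so that ${\rm e}^{\varphi^2}\sim |x|^{-2}$, which is exactly the borderline integrability in $2D$), one checks that the leading singular behaviour of $\Delta\varphi$ matches $\varphi\,{\rm e}^{\varphi^2}$ up to lower-order terms; I would then solve for a genuine solution by a perturbation/fixed-point argument around this profile, or alternatively by an ODE shooting argument for the radial equation $\varphi'' + \frac1r\varphi' + \varphi({\rm e}^{\varphi^2}-1)=0$. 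The key point to verify is that $\varphi \in \mathcal{L}(B_1)$ but $\varphi\notin H^1(B_1)$ (the gradient $|\nabla\varphi|\sim |x|^{-1}(\log\frac1{|x|})^{-1/2}$ just fails to be in $L^2$ of the plane), and that the potential term $\varphi({\rm e}^{\varphi^2}-1)$ is only an $L^1(B_1)$ function, so the equation holds in $\mathcal{D}'(B_1)$ across the singularity (no distributional mass concentrates at the origin because the singularity of $\varphi$ is merely logarithmic, hence $\varphi\in L^1$ and $\Delta\varphi$ has no Dirac part).

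The second step is to take $u_0 := \varphi$ as initial data. On one hand, the constant-in-time function $u(t,x)\equiv\varphi(x)$ is a (weak/distributional) solution of \eqref{eqDomain}; it remains singular at $x=0$ for all $t\ge 0$ and in particular never enters $L^\infty$ near the origin. On the other hand, I would invoke the improved local existence result for data in $\mathcal{L}$ (the strengthening of Ruf--Terraneo announced just before the theorem) to produce a second solution $v$ of \eqref{eqDomain} with $v(0)=\varphi$; the parabolic smoothing inherent in that construction forces $v(t,\cdot)$ to be, say, bounded (or at least continuous, or in $H^1$) for $t>0$. Since the stationary solution $\varphi$ cannot have this regularity for any $t>0$, we get $u\not\equiv v$, hence at least two distinct solutions. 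Replacing $\varphi$ by $\lambda\varphi$ for suitable scalars, or by translating/rescaling the singular profile, produces infinitely many such initial data, giving the ``infinitely many $u_0$'' claim.

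The main obstacle I anticipate is the construction and regularity bookkeeping of the singular stationary solution: one must pin down the precise asymptotics of $\varphi$ (and of $\nabla\varphi$ and $\Delta\varphi$) near $0$ sharply enough to simultaneously (i) guarantee $\varphi\in\mathcal{L}(B_1)\setminus H^1$, (ii) guarantee $\varphi\,({\rm e}^{\varphi^2}-1)\in L^1$ so the PDE is distributionally valid with no concentration at the origin, and (iii) make the perturbative/shooting argument close. The exponential nonlinearity makes the borderline delicate — the logarithmic profile $\sqrt{-2\log|x|}$ is exactly critical, so the error terms are only barely controllable and the choice of the correction term must be done with care. A secondary technical point is to confirm that the Ruf--Terraneo-type local solution genuinely gains regularity instantaneously (an $L^\infty_{loc}$ smoothing estimate for $t>0$), which is what ultimately separates the two solutions; this should follow from the heat semigroup estimates in Orlicz spaces already developed in the earlier sections.
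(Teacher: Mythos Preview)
Your overall strategy matches the paper's: build a singular stationary solution $Q$, take it as initial data, and contrast the stationary solution with a regularized evolution solution. However, there are two genuine gaps.

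\textbf{The ansatz $\varphi(x)=\sqrt{-2\log|x|}$ is too singular.} With this profile, $e^{\varphi^2}=|x|^{-2}$, which is \emph{not} integrable on $B_1\subset\R^2$ (the radial integral $\int_0^1 r^{-1}\,dr$ diverges). Hence $\varphi(e^{\varphi^2}-1)\notin L^1(B_1)$, the equation cannot hold distributionally across the origin, and $\varphi\notin\mathcal L(B_1)$. A perturbation around this exact profile inherits the same leading-order non-integrability, so a fixed-point argument there will not close. The paper avoids precise asymptotics entirely: it runs the ODE shooting argument you mention as an alternative (change of variable $r=e^{-t}$, shoot from the boundary with slope $\alpha$) and then proves, by a test-function argument valid for \emph{any} nonnegative $C^2$ solution on $B_1\setminus\{0\}$, that $f(u)\in L^1(B_1)$ automatically. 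From this, $e^{Q^2}-1\in L^1$ and $Q\in\mathcal L$ follow without ever identifying the growth rate, which must in fact be strictly slower than your ansatz.

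\textbf{``Infinitely many'' via $\lambda\varphi$ or rescaling fails.} The nonlinearity $u(e^{u^2}-1)$ is neither homogeneous nor scale-invariant, so $\lambda Q$ does not solve the stationary equation, and dilations $Q(\lambda\cdot)$ change both the equation and the domain; translations break the boundary condition on $\partial B_1$. The paper obtains infinitely many singular solutions differently: it shows that the set of shooting parameters $\alpha$ for which $y_\alpha$ stays positive for all time (hence tends to $+\infty$ by concavity) is a nondegenerate interval, using that exactly one $\alpha_0$ gives a bounded classical solution together with a connectivity/injectivity argument on the first-zero map $\alpha\mapsto T(\alpha)$.

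Two smaller points. First, the local solution in $\mathcal L$ with data $Q$ is not obtained from a generic Ruf--Terraneo statement (which needs small data); it relies on the specific property $\|Q\|_{\mathcal L(|x|<r)}\to 0$ as $r\to 0$, which permits a decomposition $Q=(1-\chi_R)Q+\chi_R Q$ into a bounded part (handled by the $H^1\cap L^\infty$ theory) plus a part small in $\mathcal L$ (handled by a contraction in $L^\infty_t\mathcal L$). Second, the instantaneous smoothing to $L^\infty$ is not just semigroup estimates: one writes the equation as a \emph{linear} heat equation with potential $a=e^{u^2}-1$, observes $a\in L^\infty_t L^{1+\varepsilon}_x$ because the Orlicz norm of the constructed solution is strictly below $1$, and invokes the Brezis--Cazenave regularization theorem for such potentials.
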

To prove the above Theorem, we first construct infinitely many stationary, non-negative and radially symmetric singular solutions $Q$. We then show that they all belong to the Orlicz space. This implies that $f(Q)\in L^1$, and thus the elliptic regularity does not reach $L^\infty$. Second, using a such singular solution as an initial data in \eqref{eqDomain}, we can construct a solution to the heat equation in $C([0,T),\mathcal{L}(B_1))$. For that, we split the initial data into a smooth part (localized away from the singularity), and small and singular part (well localized near the singularity). We easily construct a local smooth solution with the smooth initial data, and then by a perturbation argument, we construct a solution to the problem with the localized small singular data. Using a parabolic regularization result due to Brezis-Cazenave \cite{Br}, we show that this solution also enjoys a smoothing effect and is in $L^{\infty}([0,T);\mathcal{L}(B_1))\cap L^\infty_{loc}((0,T); L^\infty(B_1))$.

\begin{rem}
In order to prove a nonuniqueness result when the underlying space is $\R^2$, one can either construct a singular solution on the whole space using Pacard's method, or extend to $\R^2$ the singular solutions that we construct on the punctured ball. This latter singular function will obviously not solve the stationary problem. Thus, one need to construct a singular solution to the heat equation with that data. In this paper, we elect to restrict our selves to the ball and not the whole space $\R^2$.
\end{rem}

%%%%%%%%%%%%%%%%%%%%%%%%%%%%%%%%%%%%%%%%%%%%%%%%%%%%%%%%%%%%%%%%%%%%%%%%%%%%%%%%%%%%%%%%%%%%%%%%%%%%%%%%%%%%%%%%%%%%%%%%

\section{Background material}
%%%%%%%%%%%%%%%%%%%%%%%%%%%%%%%%%%%%%%%%%%%%%%%%%%%%%%%%%%%%%%%%%%%%%%%%%%%%%%%%%%%%%%%%%%%%%%%%%%%%%%%%%%%%%%%%%%%%%%%%%
In this section we will fix the notation, state the basic definitions and recall some known and useful tools. First we recall the standard smoothing effect (see for example \cite{Br}).
\begin{lem}\label{smw}
There exists a positive constant $C$ such that for all $1\leq\beta\leq\gamma\leq\infty$, we have
\begin{equation}\label{smoth}
\|{\rm e}^{t\Delta}\varphi\|_{L^{\gamma}}\leq\frac{C}{t^{\frac{1}{\beta}-\frac{1}{\gamma}}}\|\varphi\|_{L^{\beta}},\quad\forall t>0,\forall\varphi\in L^{\beta}(\R^2)
\end{equation}
where ${\rm e}^{t\Delta}\varphi:=K_t*\varphi=\frac{1}{4\pi t}{\rm e}^{-\frac{|\,.\,|^2}{4t}}*\varphi$.
\end{lem}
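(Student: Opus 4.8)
The plan is to read the estimate directly off the explicit Gaussian kernel via Young's convolution inequality, so that the only genuine computation is the $L^r$ norm of $K_t$. First I would fix $1\le\beta\le\gamma\le\infty$ and introduce the exponent $r\in[1,\infty]$ determined by the Young relation $1+\frac1\gamma=\frac1\beta+\frac1r$, i.e. $\frac1r=1+\frac1\gamma-\frac1\beta$. Since $\frac1\gamma\le\frac1\beta$ and $0\le\frac1\beta\le1$, one checks $\frac1r\in[1-\frac1\beta,1]\subset[0,1]$, so $r$ is a legitimate exponent in $[1,\infty]$. Young's inequality then yields at once
$$\|{\rm e}^{t\Delta}\varphi\|_{L^\gamma}=\|K_t*\varphi\|_{L^\gamma}\le\|K_t\|_{L^r}\,\|\varphi\|_{L^\beta},$$
reducing the whole statement to an explicit bound on $\|K_t\|_{L^r}$.

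Next I would compute this norm. For $1\le r<\infty$, using $K_t(x)=\frac1{4\pi t}{\rm e}^{-|x|^2/4t}$, the scaling $x=\sqrt t\,y$, and the two-dimensional Gaussian integral $\int_{\R^2}{\rm e}^{-a|x|^2}\,dx=\pi/a$, one finds
$$\|K_t\|_{L^r}^r=\Big(\frac1{4\pi t}\Big)^r\int_{\R^2}{\rm e}^{-r|x|^2/4t}\,dx=\Big(\frac1{4\pi t}\Big)^r\,\frac{4\pi t}{r},$$
hence $\|K_t\|_{L^r}=(4\pi)^{\frac1r-1}\,r^{-1/r}\,t^{\frac1r-1}$. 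The key point is that the exponent of $t$ equals exactly $\frac1r-1=\frac1\gamma-\frac1\beta=-(\frac1\beta-\frac1\gamma)$, which is precisely the power appearing in \eqref{smoth}. Thus, writing $C_r:=(4\pi)^{\frac1r-1}r^{-1/r}$, we already obtain the estimate with the $r$-dependent constant $C_r$.

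The only point requiring any care — and hence the ``main obstacle'', though a mild one — is to replace $C_r$ by a single constant $C$ independent of $\beta$ and $\gamma$. For this I would observe that $r\mapsto C_r$ is continuous on $[1,\infty)$ with $C_1=1$ and $C_r\to(4\pi)^{-1}$ as $r\to\infty$ (since $r^{-1/r}={\rm e}^{-\ln r/r}\to1$), so $C_r$ is bounded over the whole admissible range and one may take $C:=\sup_{r\ge1}C_r<\infty$. Finally I would dispose of the degenerate and endpoint cases directly: when $\beta=\gamma$ (so $r=1$) the bound is just the contraction $\|{\rm e}^{t\Delta}\varphi\|_{L^\beta}\le\|K_t\|_{L^1}\|\varphi\|_{L^\beta}=\|\varphi\|_{L^\beta}$ because $\int_{\R^2}K_t=1$; when $\gamma=\infty$ with $\beta$ finite ($r=\beta'$) the estimate is the same computation; and the extreme case $\beta=1,\gamma=\infty$ (so $r=\infty$) follows from $\|K_t\|_{L^\infty}=\frac1{4\pi t}$, matching the power $t^{-1}$. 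Since every step is an elementary application of Young's inequality and a Gaussian integral, I expect no real difficulty and the statement follows.
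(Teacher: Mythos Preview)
Your argument is correct and is exactly the standard proof via Young's convolution inequality. Note, however, that the paper does \emph{not} give its own proof of this lemma: it merely states it as a well-known smoothing effect and refers the reader to \cite{Br}. So there is nothing to compare your approach against beyond observing that what you wrote is the expected textbook argument.

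One minor remark: your discussion of the uniform constant can be shortened. Since $(4\pi)^{1/r-1}\le 1$ and $r^{-1/r}\le 1$ for all $r\ge 1$, one has $C_r\le 1$ throughout, and in fact $C_1=1$; hence $C=1$ works and no separate continuity/limit argument is needed.
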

Using Young and H\"older inequalities and the precedent Lemma with the following integral formula
$$u(t)={\rm e}^{t\Delta}u_0+\int_0^t{\rm e}^{(t-s)\Delta}\left(\partial_t u-\Delta u\right) (s)\,ds$$
we deduce the following estimates
\begin{prop}\label{nrg}
\begin{equation}\label{enrg}
\sup_{t\in [0,T]}\|u(t,.)\|_{H^1({\mathbb{R}}^{2})}\leq C\Big(\|u(t_0,.)\|_{H^1({\mathbb{R}}^{2})}+\|\partial_t u-\Delta u\|_{L^{1}([0,T],H^{1}({\mathbb{R}^{2})})}\Big).
\end{equation}
\begin{equation}\label{enrg1}
\sup_{t\in [0,T]}\|u(t,.)\|_{L^{\infty}({\mathbb{R}}^{2})}\leq C\Big(\|u(t_0,.)\|_{L^{\infty}({\mathbb{R}}^{2})}+\|\partial_t u-\Delta u\|_{L^{1}([0,T],L^{\infty}({\mathbb{R}^{2})})}\Big).
\end{equation}
\end{prop}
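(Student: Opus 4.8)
The plan is to read off both inequalities from the Duhamel (variation-of-constants) representation
\[
u(t)={\rm e}^{(t-t_0)\Delta}u(t_0)+\int_{t_0}^t{\rm e}^{(t-s)\Delta}g(s)\,ds,\qquad g:=\partial_s u-\Delta u ,
\]
which is the formula displayed just above the statement with base point translated from $0$ to $t_0\in[0,T]$ via the semigroup property (for $t\ge t_0$; in particular $t_0=0$ recovers the statement as written). The first step is therefore only to note that this representation is legitimate for the functions under consideration: it is enough that $u$ be regular enough for $g$ to make sense and lie in $L^1([0,T],H^1)$, which is precisely the finiteness assumed on the right-hand side.

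The substance of the proof is that the heat semigroup is a contraction, uniformly in time, on every Lebesgue space and on $H^1$. Writing ${\rm e}^{t\Delta}\varphi=K_t*\varphi$ with $\|K_t\|_{L^1}=1$, Young's inequality gives $\|{\rm e}^{t\Delta}\varphi\|_{L^p}\le\|\varphi\|_{L^p}$ for every $1\le p\le\infty$ and every $t>0$ --- this is the $\beta=\gamma$ endpoint of Lemma~\ref{smw}. Since the heat flow commutes with differentiation, $\nabla({\rm e}^{t\Delta}\varphi)={\rm e}^{t\Delta}\nabla\varphi$, the case $p=2$ upgrades to $\|{\rm e}^{t\Delta}\varphi\|_{H^1}\le\|\varphi\|_{H^1}$. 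Applying these bounds term by term in the Duhamel formula and using Minkowski's integral inequality to move the $H^1$ (resp. $L^\infty$) norm inside the $s$-integral yields
\[
\|u(t)\|_{H^1}\le\|u(t_0)\|_{H^1}+\int_{t_0}^t\|g(s)\|_{H^1}\,ds\le\|u(t_0)\|_{H^1}+\|g\|_{L^1([0,T],H^1)} ,
\]
and the identical computation with $H^1$ replaced by $L^\infty$; taking the supremum over $t$ gives \eqref{enrg} and \eqref{enrg1}, the constant $C$ only serving to absorb the equivalence between $\|\cdot\|_{H^1}$ and $\bigl(\|\cdot\|_{L^2}^2+\|\nabla\cdot\|_{L^2}^2\bigr)^{1/2}$. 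If at some point one wishes to bound the source term in $L^q_t$ rather than $L^1_t$, H\"older's inequality on the bounded interval $[0,T]$ performs the exchange, which is presumably why H\"older is mentioned alongside Young.

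There is no genuinely hard step here: the estimate is merely the boundedness of the Duhamel map, and the single point to watch is that one must invoke the heat kernel \emph{without} any smoothing gain --- so that no negative power of $t-s$ appears under the integral sign and the convolution in $s$ converges --- while simultaneously using that it costs nothing, i.e.\ the uniform-in-time contraction. The only real prerequisite, rather than an obstacle, is the validity of the integral representation for the solutions produced later by the fixed-point argument; this is standard once $u\in\mathcal C([0,T];H^1)$ with the nonlinearity in $L^1([0,T],H^1)$, and it will be checked at the point of use.
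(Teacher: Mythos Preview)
Your proof is correct and follows exactly the approach the paper indicates: the paper gives no detailed argument, only the sentence ``Using Young and H\"older inequalities and the precedent Lemma with the following integral formula \ldots\ we deduce the following estimates,'' and you have simply spelled out those details --- Duhamel representation, $L^p$-contractivity of ${\rm e}^{t\Delta}$ via Young with $\|K_t\|_{L^1}=1$ (the $\beta=\gamma$ case of Lemma~\ref{smw}), commutation with $\nabla$ to pass to $H^1$, and Minkowski to bring the norm inside the time integral. Your remark about the role of H\"older is also apt.
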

We recall the following nonlinear estimates which are consequence of the mean
value theorem and the convexity of the exponential function. See \cite{Ib2, Col.I}.
\begin{lem}\label{f}
For any $\varepsilon >0$ there exists $C_{\varepsilon}>0$ such that
\begin{equation}\label{f1}
|f(U_1)-f(U_2)|\leq C_{\varepsilon} |U_1-U_2|\sum_{i=1}^2\Big(e^{ (1+\varepsilon)U_i^2}-1\Big).
\end{equation}
\begin{equation}\label{f2}
|f^{'}(U_1)-f^{'}(U_2)|\leq C_{\varepsilon} |U_1-U_2|\sum_{i=1}^2\Big({\rm e}^{ 2(1+\varepsilon)U_i^2}-1\Big)^{1/2}.
\end{equation}
\end{lem}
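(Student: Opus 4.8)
The plan is to reduce both inequalities to the one–dimensional mean value theorem applied to the scalar nonlinearity $f(u)=u({\rm e}^{u^2}-1)$ of \eqref{eq2}, and then to two elementary scalar estimates. First I would record the derivatives $f'(u)=({\rm e}^{u^2}-1)+2u^2{\rm e}^{u^2}$ and $f''(u)=(6u+4u^3){\rm e}^{u^2}$, so that $f\in\mathcal C^\infty(\R)$ with $f(0)=f'(0)=f''(0)=0$. By the mean value theorem there are $\xi$ and $\eta$, each lying between $U_1$ and $U_2$, with $f(U_1)-f(U_2)=f'(\xi)(U_1-U_2)$ and $f'(U_1)-f'(U_2)=f''(\eta)(U_1-U_2)$; in particular $\xi^2,\eta^2\le\max(U_1^2,U_2^2)$. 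Since $x\mapsto{\rm e}^x-1$ and $x\mapsto({\rm e}^{x}-1)^{1/2}$ are nondecreasing on $[0,\infty)$, and the larger of two nonnegative numbers is at most their sum, \eqref{f1} and \eqref{f2} will follow once we prove, for every $t\ge0$, the two scalar bounds
\[
({\rm e}^{t}-1)+2t\,{\rm e}^{t}\ \le\ C_\varepsilon\big({\rm e}^{(1+\varepsilon)t}-1\big),\qquad \big(6\sqrt t+4t^{3/2}\big){\rm e}^{t}\ \le\ C_\varepsilon\big({\rm e}^{2(1+\varepsilon)t}-1\big)^{1/2}\,;
\]
indeed, applying the first with $t=\xi^2$ and the second with $t=\eta^2$, then using $|f''(\eta)|=(6|\eta|+4|\eta|^3){\rm e}^{\eta^2}$, ${\rm e}^{(1+\varepsilon)\max(U_1^2,U_2^2)}-1\le\sum_i({\rm e}^{(1+\varepsilon)U_i^2}-1)$ and the analogous bound for the square–root expression, yields exactly \eqref{f1} and \eqref{f2}. (One may, if preferred, refine $\xi^2\le\max(U_1^2,U_2^2)$ to $\xi^2\le(1-\theta)U_1^2+\theta U_2^2$ by convexity of $t\mapsto t^2$ and then use the convexity of the exponential to distribute ${\rm e}^{(1+\varepsilon)\xi^2}$ over $U_1^2$ and $U_2^2$, but this is not needed.)

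Both scalar bounds are obtained by inspecting the ratios $\Phi_1(t)=\dfrac{({\rm e}^t-1)+2t{\rm e}^t}{{\rm e}^{(1+\varepsilon)t}-1}$ and $\Phi_2(t)=\dfrac{(6\sqrt t+4t^{3/2}){\rm e}^t}{({\rm e}^{2(1+\varepsilon)t}-1)^{1/2}}$ on $(0,\infty)$. Each is continuous; each has a finite limit as $t\to0^+$ (respectively $\tfrac{3}{1+\varepsilon}$ and $\tfrac{6}{\sqrt{2(1+\varepsilon)}}$, because numerator and denominator vanish to the same order, namely order $t$ for $\Phi_1$ and order $t^{1/2}$ for $\Phi_2$ — this is where $f(0)=f'(0)=f''(0)=0$ enters); and each tends to $0$ as $t\to+\infty$, since the strictly larger exponent in the denominator dominates the polynomial prefactors: $\Phi_1(t)\sim 2t\,{\rm e}^{-\varepsilon t}\to0$ and $\Phi_2(t)\sim 4t^{3/2}{\rm e}^{-\varepsilon t}\to0$. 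Hence each $\Phi_j$ is bounded on $[0,\infty)$, and one takes $C_\varepsilon$ to be the larger of the two suprema.

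The argument is elementary, so there is no genuine obstacle; the only point requiring a little care is this large–$t$ comparison, where the strict gain $\varepsilon>0$ in the exponent is essential — it is precisely what makes the polynomial factors $t$ and $t^{3/2}$ (coming from the $u^2{\rm e}^{u^2}$ term in $f'$ and the $u^3{\rm e}^{u^2}$ term in $f''$) absorbable, the estimates being false at $\varepsilon=0$ — together with matching the correct vanishing order at $t=0$, which is guaranteed by $f$ and its first two derivatives vanishing at the origin.
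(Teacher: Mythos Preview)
Your proof is correct and follows exactly the approach the paper indicates: the paper does not give a detailed proof of this lemma but simply states that it is ``a consequence of the mean value theorem and the convexity of the exponential function'' with a reference to \cite{Ib2, Col.I}, and your argument carries out precisely this plan by applying the mean value theorem to $f$ and $f'$ and then absorbing the polynomial prefactors via the $\varepsilon$-gain in the exponent. The parenthetical remark you included about using convexity of $t\mapsto t^2$ and of the exponential to distribute ${\rm e}^{(1+\varepsilon)\xi^2}$ over $U_1^2,U_2^2$ is in fact the variant the paper alludes to, so you have covered both formulations.
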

In order to control the nonlinear part in $L^{1}_{t}(H^1_{x})$, we will use the following Moser-Trudinger  inequality \cite{Ad,Mo,Tr}.
\begin{prop}\label{prop3}
Let $\alpha\in (0,4\pi)$, a constant $C_{\alpha}$ exists such that
for all $u\in H^{1}({\mathbb{R}}^{2})$ satisfying $\|\nabla u\|_{L^{2}({\mathbb{R}}^{2})}\leq 1$, we have
\begin{equation}\label{eq17}
\int_{{\mathbb{R}}^{2}}\Big({\rm e}^{\alpha |u(x)|^{2}}-1\Big)dx\leq C_{\alpha}\|u\|^{2}_{L^{2}({\mathbb{R}}^{2})}.\end{equation}
Moreover, \eqref{eq17} is false if $\alpha\geq 4\pi$.
\end{prop}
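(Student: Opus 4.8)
The plan is to prove the two assertions separately: validity of \eqref{eq17} for $\alpha<4\pi$, and its failure for $\alpha\ge4\pi$.

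\emph{The inequality for $\alpha<4\pi$.} By density it suffices to treat $u\in C^\infty_c(\R^2)$, and replacing $u$ by its Schwarz symmetrization leaves $\|u\|_{L^2}$ and the equimeasurable quantity $\int_{\R^2}(e^{\alpha u^2}-1)\,dx$ unchanged while not increasing $\|\nabla u\|_{L^2}$ (the P\'olya--Szeg\H{o} inequality); hence I may assume $u\ge0$ is radial and nonincreasing with $\|\nabla u\|_{L^2}\le1$. I would first observe that the quotient $\|u\|_{L^2}^{-2}\int_{\R^2}(e^{\alpha u^2}-1)\,dx$ is invariant under the two-dimensional dilations $u\mapsto u(\cdot/\lambda)$, which also preserve $\|\nabla u\|_{L^2}$; this scale invariance explains why $\|u\|_{L^2}^2$ is the correct normalization and why a sharp threshold is to be expected. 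To produce it, expand and integrate term by term (all terms nonnegative, so monotone convergence applies):
\[
\int_{\R^2}\big(e^{\alpha u^2}-1\big)\,dx=\sum_{k\ge1}\frac{\alpha^k}{k!}\,\|u\|_{L^{2k}}^{2k}.
\]
The key input is the sharp growth of the two-dimensional Gagliardo--Nirenberg constants: there is an absolute $M$ with $\|u\|_{L^{2k}}^{2k}\le M\,\dfrac{k!}{(4\pi)^{k-1}}\,\|\nabla u\|_{L^2}^{2(k-1)}\|u\|_{L^2}^2$ for every $k\ge1$. Inserting this and using $\|\nabla u\|_{L^2}\le1$ gives
\[
\int_{\R^2}\big(e^{\alpha u^2}-1\big)\,dx\le 4\pi M\,\|u\|_{L^2}^2\sum_{k\ge1}\Big(\frac{\alpha}{4\pi}\Big)^{k},
\]
and the geometric series converges precisely when $\alpha<4\pi$, yielding \eqref{eq17} with $C_\alpha=\dfrac{4\pi M\alpha}{4\pi-\alpha}$.

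\emph{The main obstacle} is exactly this constant: the polynomial-in-$k$ factor is harmless, but the rate $(4\pi)^{-(k-1)}$ is what pins the threshold at $4\pi$, and establishing it is a quantitative reformulation of Moser's sharp inequality \cite{Mo} (equivalently, of the fact that the best constant in the $L^{2k}$ interpolation inequality is governed by the optimal Moser--Trudinger exponent). This is the only non-elementary ingredient; one may either quote it from \cite{Mo,Ad} or recover it by Moser's radial substitution $r=e^{-t/2}$, under which the radial Dirichlet energy becomes $4\pi\int_0^\infty|\dot w|^2\,dt$ with $w(t)=u(e^{-t/2})$, so that the constant $4\pi$ emerges from the Jacobian of the change of variables.

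\emph{Failure for $\alpha\ge4\pi$.} I would test \eqref{eq17} against the Moser sequence supported in the unit disc,
\[
m_n(x)=\frac{1}{\sqrt{2\pi}}
\begin{cases}
\sqrt{\log n}, & |x|\le 1/n,\\[1mm]
\dfrac{\log(1/|x|)}{\sqrt{\log n}}, & 1/n\le|x|\le1,\\[1mm]
0, & |x|\ge1,
\end{cases}
\]
which lie in $H^1(\R^2)$ and satisfy $\|\nabla m_n\|_{L^2}=1$. A direct computation gives $\|m_n\|_{L^2}^2\sim(4\log n)^{-1}\to0$, while on the core $|x|\le1/n$ one has $m_n^2=\tfrac{\log n}{2\pi}$, so $e^{\alpha m_n^2}=n^{\alpha/2\pi}$ there and
\[
\int_{\R^2}\big(e^{\alpha m_n^2}-1\big)\,dx\ge \frac{\pi}{n^2}\big(n^{\alpha/2\pi}-1\big).
\]
Hence the quotient is $\gtrsim(\log n)\,n^{\alpha/2\pi-2}$, which tends to $+\infty$ for every $\alpha\ge4\pi$; the extra factor $\log n$, coming from dividing by $\|m_n\|_{L^2}^2$, is exactly what forces even the borderline case $\alpha=4\pi$ to fail (in contrast with the endpoint Trudinger--Moser inequality normalized by the full $H^1$ norm). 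Therefore no constant $C_\alpha$ can satisfy \eqref{eq17} when $\alpha\ge4\pi$, completing the proof. \endproof
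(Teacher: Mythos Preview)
The paper does not prove Proposition~\ref{prop3}; it is stated as a known result with references to \cite{Ad,Mo,Tr}, so there is no in-paper argument to compare against. Your proposal is therefore a genuine addition rather than a reproduction.

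On the merits: your failure argument for $\alpha\ge 4\pi$ is complete and the computations with the Moser sequence are correct, including the crucial observation that the extra $\log n$ from $\|m_n\|_{L^2}^{-2}$ kills the borderline case $\alpha=4\pi$. For the positive direction, your reduction to the sharp $k$-dependent Gagliardo--Nirenberg bound
\[
\|u\|_{L^{2k}}^{2k}\le M\,\frac{k!}{(4\pi)^{k-1}}\,\|\nabla u\|_{L^2}^{2(k-1)}\|u\|_{L^2}^{2}
\]
is valid, but be aware---as you yourself flag---that this bound with the precise base $4\pi$ and a $k$-independent $M$ is essentially equivalent to the inequality you are proving; invoking it is tantamount to quoting \cite{Mo,Ad}, which is exactly what the paper does. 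If you want the argument to be self-contained, you should carry out the radial Moser substitution you sketch (or follow the Adachi--Tanaka approach directly) rather than leave the GN growth rate as a black box. As written, the positive part is a correct repackaging of the cited literature rather than an independent proof.
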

Let us mention that $\alpha=4\pi$ becomes admissible if we require $\|u\|_{H^{1}({\mathbb{R}}^{2})}\leq 1$ rather than
$\|\nabla u\|_{L^{2}({\mathbb{R}}^{2})}\leq 1$. Precisely
\begin{equation}\label{mos}
\displaystyle\sup_{\|u\|_{H^{1}({\mathbb{R}}^{2})}\leq 1}\int_{{\mathbb{R}}^{2}}\Big({\rm e}^{4\pi |u(x)|^{2}}-1\Big)dx<\infty
\end{equation}
and this is false for $\alpha>4\pi$. See \cite{Ru} for more details.\\

Let us now introduce the so-called Orlicz spaces on $\R^d$ and some
related basic facts.
\begin{defi}\label{deforl}\quad\\
Let $\phi : \R^+\to\R^+$ be a convex increasing function such that
$$
\phi(0)=0=\lim_{s\to 0^+}\,\phi(s),\quad
\lim_{s\to\infty}\,\phi(s)=\infty.
$$
We say that a measurable function $u : \R^d\to\C$ belongs to
$L^\phi$ if there exists $\lambda>0$ such that
$$
\dint_{\R^d}\,\phi\left(\frac{|u(x)|}{\lambda}\right)\,dx<\infty.
$$
Then, we denote \bq \label{norm}
\|u\|_{L^\phi}=\dinf\,\left\{\,\lambda>0,\quad\dint_{\R^d}\,\phi\left(\frac{|u(x)|}{\lambda}\right)\,dx\leq
1\,\right\}. \eq
\end{defi}
It is easy to check that $L^\phi$ is a $\C$-vectorial space and
$\|\cdot\|_{L^\phi}$ is a norm. Moreover, we have the following
properties.\\ \noindent$\bullet$ For $\phi(s)=s^p,\, 1\leq
p<\infty$,  $L^\phi$ is nothing else than the  Lebesgue space
$L^p$.\\ \noindent$\bullet$ For  $\phi_\alpha(s)={\rm e}^{\alpha
s^2}-1$, with $\alpha>0$,  we claim  that
$L^{\phi_\alpha}=L^{\phi_1}.$ It is actually a direct consequence
of Definition \ref{deforl}.\\ \noindent$\bullet$ We may replace in
\eqref{norm} the number $1$ by any positive constant. This change
the norm $\|\cdot\|_{L^\phi}$ to an equivalent norm.\\
\noindent$\bullet$  For $u\in L^\phi$ with $A:=\|u\|_{L^\phi}>0$,
we have the following property \bq \label{norm1}
\left\{\,\lambda>0,\quad\dint_{\R^d}\,\phi\left(\frac{|u(x)|}{\lambda}\right)\,dx\leq
1\,\right\}=[A, \infty[\,. \eq

In what follows we shall fix  $d=2$, $\phi(s)={\rm e}^{ s^2}-1$ and
denote the Orlicz space $L^\phi$ by ${\mathcal L}$ endowed with the
norm $\|\cdot\|_{\mathcal L}$. It is easy to see that ${\mathcal L}\hookrightarrow L^p$
for every $2\leq p<\infty$. The 2D critical Sobolev embedding in Orlicz space ${\mathcal L}$
states as follows:
$$
H^{1}(\R^2)\hookrightarrow {\cL}(\R^2)\,.
$$
We recall some elementary properties about Orlicz spaces (see for example \cite{HMN,RT}).
\begin{prop} \label{or1} We have\\
{\bf a)} $\left(L^\phi, \|\cdot\|_{L^\phi}\right)$ is a Banach space.\\
{\bf b)} $L^1\cap L^\infty\subset L^\phi\subset L^1+L^\infty$.\\
{\bf c)} There exists a positive real number $\kappa_{\phi,d}$ such that if $T : L^1\to L^1$ with norm $M_1$ and $T : L^\infty\to
L^\infty$ with norm $M_\infty$,  then $T : L^\phi\to L^\phi$ with
norm $M\leq\kappa_{\phi,n}\dsup(M_1,M_\infty)$.\\
{\bf d)} For any $p\geq 2$, $\mathcal L(\R^2)\subset L^p(\R^2)$ and we have
$$\|u\|_{L^p(\R^2)}\leq \Big(\Gamma(\frac{p}{2}+1)\Big)^{\frac{1}{p}}\|u\|_{\mathcal L(\R^2)}\, ,$$
where $\Gamma(x):=\int_0^{+\infty}t^{x-1}{\rm e}^{-t}dt$.
\end{prop}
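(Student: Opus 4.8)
The four assertions are of unequal depth: (b), (a) and (d) are elementary once the defining modular relation \eqref{norm1} is available, whereas (c) is a genuine interpolation statement. I would establish them in the order (b), (a), (d), (c). For the left inclusion in (b), let $u\in L^1\cap L^\infty$. Convexity with $\phi(0)=0$ gives $\phi(\theta s)\le\theta\phi(s)$ for $0\le\theta\le1$; choosing $\lambda\ge\|u\|_{L^\infty}$ so that $|u|/\lambda\le1$ pointwise yields $\phi(|u(x)|/\lambda)\le(|u(x)|/\lambda)\,\phi(1)$, hence $\dint_{\R^d}\phi(|u|/\lambda)\le\phi(1)\|u\|_{L^1}/\lambda\le1$ for $\lambda$ large, so $u\in L^\phi$. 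For the right inclusion, fix $s_0$ with $\phi(s)\ge s$ for $s\ge s_0$ (possible since $\phi(s)/s\to\infty$), put $A=\|u\|_{L^\phi}$ and split $u=u\mathbf 1_{\{|u|\ge As_0\}}+u\mathbf 1_{\{|u|<As_0\}}=:v+w$. Then $\|w\|_{L^\infty}\le As_0$, while on $\{|u|\ge As_0\}$ one has $|u|/A\le\phi(|u|/A)$, so $\|v\|_{L^1}\le A\dint\phi(|u|/A)\le A$ by \eqref{norm1}. This proves $L^\phi\subset L^1+L^\infty$ and in fact the continuity $\|u\|_{L^1+L^\infty}\le(1+s_0)\|u\|_{L^\phi}$.

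For (a), since the norm axioms are already recorded, only completeness remains. Let $(u_n)$ be Cauchy in $L^\phi$. By the continuous embedding just obtained it is Cauchy in $L^1+L^\infty$, and since $\dint_E|f|\le(1+|E|)\|f\|_{L^1+L^\infty}$ for every finite-measure set $E$, it is Cauchy in $L^1(E)$; diagonalizing over an exhaustion $E_j\uparrow\R^d$ I extract a subsequence converging a.e. to a measurable $u$. To upgrade to $L^\phi$-convergence, fix $\varepsilon>0$ and $N$ with $\dint\phi(|u_n-u_m|/\varepsilon)\le1$ for $m,n\ge N$ (again \eqref{norm1}); letting $m\to\infty$ along the a.e. subsequence, Fatou's lemma and the continuity of $\phi$ give $\dint\phi(|u_n-u|/\varepsilon)\le1$ for $n\ge N$, whence $u\in L^\phi$ and $\|u_n-u\|_{L^\phi}\le\varepsilon$. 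Thus $u_n\to u$ in $L^\phi$ and $L^\phi$ is complete.

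For (d) take $\phi(s)=e^{s^2}-1$ and $A=\|u\|_{\cL}$, so $\dint_{\R^2}(e^{|u|^2/A^2}-1)\le1$ by \eqref{norm1}. If $p=2k$ is an even integer, expanding $e^t-1=\sum_{j\ge1}t^j/j!$ and retaining only the $j=k$ term gives $\frac{1}{k!\,A^{2k}}\dint|u|^{2k}\le1$, i.e. $\|u\|_{L^{2k}}\le(k!)^{1/(2k)}A=(\Gamma(\tfrac p2+1))^{1/p}A$, which is exactly the claim. For a general real $p\ge2$ the same constant comes from the elementary pointwise bound $t^{p/2}\le\Gamma(\tfrac p2+1)(e^t-1)$ for $t\ge0$, which I would verify by showing $\sup_{t\ge0}t^{p/2}(e^t-1)^{-1}\le\Gamma(\tfrac p2+1)$ (using $\sup_t t^{p/2}e^{-t}=(\tfrac p2)^{p/2}e^{-p/2}\le\Gamma(\tfrac p2+1)$ from Stirling, and handling small $t$ separately); applying it with $t=|u|^2/A^2$ and integrating yields $\dint|u|^p\le\Gamma(\tfrac p2+1)A^p$.

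The substantive point is (c). Setting $M=\sup(M_1,M_\infty)$ and replacing $T$ by $T/M$, I may assume $M_1,M_\infty\le1$. The key tool is the Calderón--Mityagin theorem for the couple $(L^1,L^\infty)$: its $K$-functional is $K(t,g)=\int_0^t g^\ast(s)\,ds$, with $g^\ast$ the decreasing rearrangement, and an operator bounded by $1$ on both endpoints satisfies $K(t,Tf)\le K(t,f)$ for all $t>0$, equivalently the Hardy--Littlewood--P\'olya majorization $\int_0^t(Tf)^\ast\le\int_0^t f^\ast$. Since the Luxemburg norm is rearrangement invariant and, through Hardy's lemma and the convexity of $\phi$, monotone with respect to this majorization, one deduces $\|Tf\|_{L^\phi}\le\kappa_\phi\|f\|_{L^\phi}$, and undoing the normalization restores the factor $M$; the constant $\kappa_\phi$ (which may in fact be taken equal to $1$ by Calder\'on's theorem) quantifies the passage from the majorization to the Luxemburg norm. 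I expect this interpolation step, namely establishing the majorization and transferring it to the Orlicz norm, to be the main obstacle, the other three parts being routine; in a write-up it can alternatively be dispatched by citing the rearrangement-invariant interpolation theorem referenced in \cite{HMN,RT}.
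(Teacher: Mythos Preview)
The paper does not prove this proposition at all: it is stated as a recollection of ``elementary properties about Orlicz spaces'' with a citation to \cite{HMN,RT}, and no argument is given. Your write-up therefore goes well beyond what the paper supplies, and your overall strategy for each part is sound. A few remarks on the details. In (b), the claim that $\phi(s)/s\to\infty$ is not a consequence of Definition~\ref{deforl} (take $\phi(s)=s$); what you actually need is only a linear lower bound $\phi(s)\ge cs$ for $s\ge s_0$, which does follow from convexity and $\phi(\infty)=\infty$, and your splitting argument then goes through with an extra harmless constant. In (d), the pointwise bound $t^{p/2}\le\Gamma(\tfrac p2+1)(e^t-1)$ for real $p\ge2$ is genuinely true and can be proved cleanly by showing that $h(t):=\Gamma(\tfrac p2+1)(e^t-1)-t^{p/2}$ satisfies $h(0)=0$ and $h'(t)\ge0$; the latter reduces to $\Gamma(\alpha)\ge t^{\alpha-1}e^{-t}$ with $\alpha=p/2$, whose maximum in $t$ is $((\alpha-1)/e)^{\alpha-1}\le\Gamma(\alpha)$, which is exactly the elementary inequality $\Gamma(x+1)\ge(x/e)^x$ you invoke under the name ``Stirling''. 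For (c), appealing to the Calder\'on--Mityagin description of $(L^1,L^\infty)$ and the monotonicity of the Luxemburg norm under Hardy--Littlewood--P\'olya majorization is the canonical route and indeed yields $\kappa_\phi=1$ for linear $T$; the paper's applications of (c) are to the heat semigroup, so linearity is available. In short, where the paper simply cites, you have supplied correct self-contained arguments, with only the cosmetic adjustments above needed.
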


Now, we give some technical results which will be  useful later. The following lemma is classical (see for example Proposition 4.2 of \cite{Tay}) but the proof seems to be new.
\begin{lem}\label{int}
Let $u\in H^1(\R^2)$. Then for any $ \alpha>0$ and $1\leq q<\infty$,
$$
{\rm e}^{\alpha u^2}-1\in L^q(\R^2).
$$
\end{lem}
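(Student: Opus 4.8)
The plan is to reduce the statement to the Moser--Trudinger inequality of Proposition \ref{prop3} by a scaling and splitting argument. Fix $u\in H^1(\R^2)$, $\alpha>0$ and $1\le q<\infty$. Since ${\rm e}^{\alpha u^2}-1$ is clearly locally integrable (being continuous), the only issue is integrability at infinity, and more precisely the contribution of the set where $|u|$ is large. First I would write, using the elementary inequality $({\rm e}^{a}-1)^q\le {\rm e}^{qa}-1$ for $a\ge0$ (convexity of $t\mapsto {\rm e}^t$, or a direct comparison),
$$
\int_{\R^2}\big({\rm e}^{\alpha u^2}-1\big)^q\,dx\ \le\ \int_{\R^2}\big({\rm e}^{q\alpha u^2}-1\big)\,dx,
$$
so it suffices to treat $q=1$ with $\alpha$ replaced by the arbitrary constant $q\alpha$; rename it $\alpha$.

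Next I would localize to where $u$ is large. Choose $R>0$ (to be fixed) and split $\R^2=\{|u|\le R\}\cup\{|u|>R\}$. On $\{|u|\le R\}$ one has ${\rm e}^{\alpha u^2}-1\le ({\rm e}^{\alpha R^2}-1)\,\frac{u^2}{R^2}\cdot\frac{R^2}{u^2}$ — more cleanly, ${\rm e}^{\alpha u^2}-1 \le C_R\, u^2$ on this set since $(e^{\alpha s}-1)/s$ is bounded on $[0,R^2]$ — hence this piece is controlled by $C_R\|u\|_{L^2}^2<\infty$. For the set $\Omega_R:=\{|u|>R\}$, let $v:=u-\mathrm{sign}(u)\min(|u|,R)$, i.e. the truncation of $u$ at height $R$ subtracted off, so that $v\in H^1(\R^2)$ with $\nabla v = \nabla u\cdot \mathbf 1_{\Omega_R}$ and $\|\nabla v\|_{L^2}\le\|\nabla u\|_{L^2}$. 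Actually it is cleaner to argue directly with the tail: pick $R$ large enough that $\|\nabla u\|_{L^2(\Omega_R)}^2<\varepsilon_0$ for a small $\varepsilon_0$ depending on $\alpha$ (possible since $\nabla u\in L^2$ and $|\Omega_R|<\infty$), and on $\Omega_R$ write $u^2\le 2v^2+2R^2$ where $v$ is as above supported in $\Omega_R$; then
$$
\int_{\Omega_R}\big({\rm e}^{\alpha u^2}-1\big)\,dx\ \le\ {\rm e}^{2\alpha R^2}\int_{\R^2}{\rm e}^{2\alpha v^2}\,dx\ =\ {\rm e}^{2\alpha R^2}\Big(|\Omega_R| + \int_{\R^2}\big({\rm e}^{2\alpha v^2}-1\big)\,dx\Big).
$$
Now apply Proposition \ref{prop3} to $w:=v/\|\nabla v\|_{L^2}$: since $2\alpha\|\nabla v\|_{L^2}^2\le 2\alpha\varepsilon_0<4\pi$ for $\varepsilon_0$ small, we get $\int_{\R^2}({\rm e}^{2\alpha v^2}-1)\,dx=\int({\rm e}^{2\alpha\|\nabla v\|^2 w^2}-1)\le C\|v\|_{L^2}^2<\infty$. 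Combining the two pieces gives ${\rm e}^{\alpha u^2}-1\in L^1(\R^2)$, hence the original claim.

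The main obstacle — really the only delicate point — is making the constant in the exponent subcritical on the "large" region: the global $L^2$-norm of $\nabla u$ may be arbitrarily large, so one cannot apply Moser--Trudinger directly, and one must exploit that truncating at a high level $R$ both leaves only a finite-measure set and, crucially, concentrates only a small amount of Dirichlet energy (since the energy of the tail $\|\nabla u\|_{L^2(\{|u|>R\})}\to0$ as $R\to\infty$). Once that observation is in place the rest is bookkeeping with elementary inequalities. I would double-check the claim $\nabla(\,\text{truncation of }u\,)=\nabla u\cdot\mathbf 1_{\{|u|>R\}}$, which is the standard chain rule for $H^1$ functions (Stampacchia), and that $|\{|u|>R\}|<\infty$, which follows from Chebyshev since $u\in L^2$.
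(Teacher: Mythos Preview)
Your argument is correct and takes a genuinely different route from the paper's. The paper first reduces (implicitly via symmetric decreasing rearrangement) to radial $u$ with $\alpha=q=1$; it then handles the region $\{|x|\ge a\}$ using the radial decay estimate $|u(r)|\le C r^{-1/2}\|u\|_{H^1}$, and near the origin uses the pointwise bound $|u(r)-u(a)|\le (2\pi)^{-1/2}\|\nabla u\|_{L^2(|x|<a)}(-\log(r/a))^{1/2}$, choosing $a$ small so that $\|\nabla u\|_{L^2(|x|<a)}^2<2\pi$ and hence ${\rm e}^{u^2(r)}r\lesssim r^{1-\beta}$ with $\beta<2$. Your proof instead truncates at a level $R$ and applies Proposition~\ref{prop3} directly to the excess $v=(|u|-R)_+\,\mathrm{sign}(u)$, exploiting that $\|\nabla v\|_{L^2}=\|\nabla u\|_{L^2(\{|u|>R\})}\to0$ as $R\to\infty$. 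Both proofs hinge on the same underlying idea---localize so that the Dirichlet energy becomes subcritical---but your level-set truncation avoids the rearrangement step and the radial-specific pointwise estimates, and it plugs straight into the Moser--Trudinger inequality already stated in the paper; in that sense it is more self-contained. Two cosmetic points: the justification ``possible since $\nabla u\in L^2$ and $|\Omega_R|<\infty$'' should really say $|\Omega_R|\to0$ (Chebyshev) together with absolute continuity of $\int|\nabla u|^2$; and the intermediate expression $\int_{\R^2}{\rm e}^{2\alpha v^2}\,dx$ is infinite as written---you mean $\int_{\Omega_R}{\rm e}^{2\alpha v^2}\,dx$, which indeed equals $|\Omega_R|+\int_{\R^2}({\rm e}^{2\alpha v^2}-1)\,dx$ as you use on the next line.
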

\begin{proof}[Proof of Lemma \ref{int}] Without loss of generalitye, we may assume that $\alpha=q=1$ and $u$ is radial. First, let us observe that thanks to the following well known radial estimate
 $$
 |u(r)|\leq
\frac{C}{\sqrt{r}}\,\|u\|_{H^1},
$$
we obtain for any $a>0$,
$$
\int_{|x|\geq a}\,\left({\rm e}^{|u(x)|^2}-1\right)\,dx\leq \int_{|x|\geq a}\,|u(x)|^2\,{\rm e}^{|u(x)|^2}\,dx\leq\,{\rm e}^{\frac{C}{a}\|u\|_{H^1}^2}\, \|u\|_{L^2}^2<\infty\,.
$$

Therefore, to conclude the proof it is sufficient to show that for suitable $a>0$, we have
\bq
\label{near0}
\int_0^a\, {\rm e}^{u^2(r)}\, r\,dr<\infty\,.
\eq
For $a>0$ and $0<r<a$, write
\beqn
|u(r)-u(a)|&=&\Big|\int_r^a\,\sqrt{s}\, u'(s)\,\frac{ds}{\sqrt{s}}\Big|\\
&\leq&\frac{1}{\sqrt{2\pi}}\,\|\nabla u\|_{L^2(|x|<a)}\,\left(-\log(\frac{r}{a})\right)^{1/2}\,.
\eeqn
Choosing $a>0$ small enough such that
$$
\|\nabla u\|_{L^2(|x|<a)}^2<2\pi,
$$
and witting
\beqn
{\rm e}^{u^2(r)}\, r&\lesssim& {\rm e}^{2(u(r)-u(a))^2}\, r\\
&\lesssim& r^{1-\beta}, \quad \beta:=\frac{\|\nabla u\|_{L^2(|x|<a)}^2}{\pi},
\eeqn
we end up with \eqref{near0}.
\end{proof}
\begin{prop}\label{int1}
Let $u\in \mathcal C([0,T];H^1(\R^2))$ for some $T>0$. Then
$${\rm e}^{u^2}-1\,\,\in \,\,\mathcal C([0,T];L^1(\R^2)).$$
\end{prop}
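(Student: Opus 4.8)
The plan is to prove continuity in time of $t\mapsto {\rm e}^{u(t)^2}-1$ as a map into $L^1(\R^2)$, given that $u\in\mathcal C([0,T];H^1(\R^2))$. By Lemma \ref{int} we already know that for each fixed $t$ the function ${\rm e}^{u(t)^2}-1$ lies in $L^1$ (indeed in every $L^q$), so the only issue is continuity. Fix $t_0\in[0,T]$ and a sequence $t_n\to t_0$; write $u_n:=u(t_n)$ and $v:=u(t_0)$, so that $u_n\to v$ in $H^1(\R^2)$. I want to show $\|{\rm e}^{u_n^2}-{\rm e}^{v^2}\|_{L^1}\to 0$.

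First I would reduce to a pointwise-plus-domination argument. Pointwise a.e.\ convergence of ${\rm e}^{u_n^2}-1$ to ${\rm e}^{v^2}-1$ follows, after passing to a subsequence, from a.e.\ convergence of $u_n$ to $v$ (a consequence of $H^1$, hence $L^2$, convergence) and continuity of $s\mapsto {\rm e}^{s^2}-1$; since the limit is the same along every subsequence, the full sequence converges pointwise a.e. To upgrade this to $L^1$ convergence I would use the generalized dominated convergence theorem: it suffices to produce functions $g_n\in L^1$ with $|{\rm e}^{u_n^2}-{\rm e}^{v^2}|\le g_n$, $g_n\to g$ a.e., and $\int g_n\to\int g<\infty$. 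Using the elementary bound $|{\rm e}^{a^2}-{\rm e}^{b^2}|\le |a^2-b^2|({\rm e}^{a^2}+{\rm e}^{b^2})\le |a-b|(|a|+|b|)({\rm e}^{a^2}+{\rm e}^{b^2})$ and Hölder, one controls the $L^1$ norm of the difference by a product of an $L^2$-type factor carrying $\|u_n-v\|$, which tends to $0$, times factors like $\||u_n|+|v|\|_{L^4}$ and $\|{\rm e}^{u_n^2}-1\|_{L^4}+\|{\rm e}^{v^2}-1\|_{L^4}+\text{const}$. The remaining task is thus to show these latter factors stay bounded, i.e.\ that $\|{\rm e}^{u_n^2}-1\|_{L^q}$ is bounded uniformly in $n$ for fixed $q$.

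The main obstacle is exactly this uniform-in-$n$ exponential integrability bound, and here I would run the proof of Lemma \ref{int} with uniform constants. Since $u_n\to v$ in $H^1$, the norms $\|u_n\|_{H^1}$ are bounded, say by $M$. The exterior estimate $\int_{|x|\ge a}({\rm e}^{|u_n|^2}-1)\,dx\le {\rm e}^{(C/a)M^2}\|u_n\|_{L^2}^2$ is then uniform in $n$. For the interior part near the radial singularity the delicate point is that the threshold $a$ in Lemma \ref{int} depends on the local Dirichlet energy $\|\nabla u\|_{L^2(|x|<a)}$; but $\|\nabla u_n\|_{L^2(|x|<a)}\le \|\nabla(u_n-v)\|_{L^2}+\|\nabla v\|_{L^2(|x|<a)}$, the first term $\to 0$ and the second $\to 0$ as $a\to 0$ by absolute continuity of the integral, so one can choose $a$ small and $N$ large so that $\|\nabla u_n\|_{L^2(|x|<a)}^2<\pi$ for all $n\ge N$, giving the bound ${\rm e}^{u_n^2(r)}r\lesssim r^{1-\beta_n}$ with $\beta_n<1$ uniformly; the finitely many $u_n$ with $n<N$ and $v$ itself are handled individually by Lemma \ref{int}. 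This yields a uniform bound on $\|{\rm e}^{u_n^2}-1\|_{L^q}$, closes the domination argument, and completes the proof.

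I should note one subtlety: the pointwise and domination hypotheses should be verified along subsequences and then the standard ``every subsequence has a further subsequence converging to the same limit'' argument promotes the conclusion to the full sequence; I would phrase the final step that way to keep the logic clean. Also, radial symmetry is not available for general $u(t)$, but it is not needed — only the uniform $H^1$ bound and the local energy smallness are used, and the radial reduction in Lemma \ref{int} can be replaced by the Moser--Trudinger inequality of Proposition \ref{prop3} applied to a suitably rescaled function to obtain the same uniform $L^q$ bounds without assuming radiality.
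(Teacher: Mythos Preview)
Your approach is genuinely different from the paper's. The paper does not reduce to uniform $L^q$ bounds on ${\rm e}^{u_n^2}-1$; instead it writes $v_n:=u_n-u$ and uses the algebraic identity
\[
{\rm e}^{u_n^2}-{\rm e}^{u^2}={\rm e}^{u^2}\Big[({\rm e}^{v_n^2}-1)({\rm e}^{2uv_n}-1)+({\rm e}^{2uv_n}-1)+({\rm e}^{v_n^2}-1)\Big],
\]
then applies Moser--Trudinger directly to the small function $v_n$ (so $\|{\rm e}^{v_n^2}-1\|_{L^2}\to 0$), and handles the cross term ${\rm e}^{2uv_n}-1$ by a power-series expansion combined with the Orlicz bound $\|w\|_{L^{2p}}\le(\Gamma(p+1))^{1/2p}\|w\|_{\mathcal L}$, summing a geometric series in $\|u\|_{\mathcal L}\|v_n\|_{\mathcal L}\to 0$. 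This is more direct: it never needs a uniform bound on ${\rm e}^{u_n^2}-1$ itself.

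Your route via the mean value bound and H\"older is valid in principle, but the step you flag as ``the main obstacle'' is not closed. Running the proof of Lemma~\ref{int} with uniform constants does not work for non-radial $u_n$: both the exterior estimate (which relies on the radial pointwise decay $|u(r)|\lesssim r^{-1/2}\|u\|_{H^1}$) and the interior estimate (which localizes at the single origin) are specific to radial functions. Your proposed fix, ``Moser--Trudinger applied to a suitably rescaled function,'' does not help as stated: rescaling $u_n$ to unit gradient gives only $\int({\rm e}^{\alpha u_n^2/\|\nabla u_n\|_{L^2}^2}-1)\lesssim 1$ for $\alpha<4\pi$, which says nothing about $\int({\rm e}^{q u_n^2}-1)$ when $\|\nabla u_n\|_{L^2}$ is large. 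The correct way to get the uniform $L^q$ bound is to split $u_n=v+(u_n-v)$, use ${\rm e}^{u_n^2}-1\le({\rm e}^{2v^2}-1)({\rm e}^{2(u_n-v)^2}-1)+({\rm e}^{2v^2}-1)+({\rm e}^{2(u_n-v)^2}-1)$, apply Lemma~\ref{int} to the fixed $v$, and apply Moser--Trudinger to the small $u_n-v$. Once you write this down, you are essentially carrying out the paper's decomposition inside your argument, so the saving over the paper's direct proof disappears.
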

\begin{proof}[Proof of Proposition \ref{int1}]
Let $t\in [0,T]$ and $(t_n)$ be a sequence in $(0,T)$ such that $t_n \rightarrow t $. Denote by $u_n:=u(t_n)$ and $u=u(t)$.
We will prove that
$$
{\rm e}^{u_n^2}-1\rightarrow {\rm e}^{u^2}-1\quad\mbox{in}\quad L^1(\R^2).
$$
Set $v_n:=u_n-u$. Clearly, we have
$$
{\rm e}^{u_n^2}-{\rm e}^{u^2}={\rm e}^{u^2}\Big(({\rm e}^{v_n^2}-1)({\rm e}^{2v_nu}-1)+({\rm e}^{2v_nu}-1)+({\rm e}^{v_n^2}-1)\Big).
$$
Hence
\begin{eqnarray}
\|{\rm e}^{u_n^2}-{\rm e}^{u^2}\|_{L^1}
&\lesssim&\|{\rm e}^{u^2}-1\|_{L^{2}}\,\|({\rm e}^{v_n^2}-1)({\rm e}^{2v_nu}-1)+({\rm e}^{2v_nu}-1)+({\rm e}^{v_n^2}-1)\|_{L^2}\nonumber\\
&+&\|({\rm e}^{v_n^2}-1)({\rm e}^{2v_nu}-1)+({\rm e}^{2v_nu}-1)+({\rm e}^{v_n^2}-1)\|_{L^{1}},\label{1}
\end{eqnarray}
and by Moser-Trudinger  inequality we have
\begin{equation}\label{2}
\displaystyle\lim_{n}\|{\rm e}^{v_n^2}-1\|_{L^2}=0.
\end{equation}
Now, it is sufficient to prove that
$$
\lim_n\|{\rm e}^{2|uv_n|}-1\|_{L^1}= 0.
$$
From Proposition \ref{or1}, recall that for every $p\geq 2$, we have
$$\|\cdot\|_{L^p}\leq(\Gamma(\frac{p}{2}+1))^{\frac{1}{p}}\|\cdot\|_{\mathcal L}.$$
Thus, by H\"older inequality we have the estimate
\begin{eqnarray*}
\|{\rm e}^{2|uv_n|}-1\|_{L^1}
&\leq&\sum_{p\geq 1}\frac{1}{p!}(\|2u\|_{L^{2p}}\|v_n\|_{L^{2p}})^p\\
&\leq&\sum_{p\geq 1}\frac{1}{p!}\Gamma(p+1)(\|2u\|_{\mathcal L}\|v_n\|_{\mathcal L})^p=
1-\frac{1}{1-2\|u\|_{\mathcal L}\|v_n\|_{\mathcal L}},
\end{eqnarray*}
which implies that
\begin{equation}\label{3}
\lim_n\|{\rm e}^{2|uv_n|}-1\|_{L^1}= 0.
\end{equation}
This together with \eqref{1}, \eqref{2} and \eqref{3} end the proof of Proposition \ref{int1}.
\end{proof}
%%%%%%%%%%%%%%%%%%%%%%%%%%%%%%%%%%%%%%%%%%%%%%%%%%%%
The following elementary result is needed to derive an $L^\infty$ bound of the solution of the nonlinear heat equation in the defocusing case.
\begin{lem}\label{sq}
Let $(x_n)_{n\in\N}$ be a real valued sequence satisfying for some constants $C>1$ and $\beta>1$,
$$0\leq x_0\leq C_0^*:=C^{\frac{-1}{(\beta-1)^2}},\quad\mbox{and}\quad 0\leq x_{n+1}\leq C^nx_n^{\beta}.$$
Then
$$\lim_{n\rightarrow +\infty}x_n=0.$$
\end{lem}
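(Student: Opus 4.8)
The plan is to take logarithms and reduce the recursion to an \emph{additive} inequality that can be summed. Write $y_n := \log_C x_n$ whenever $x_n > 0$ (if some $x_n = 0$ then all subsequent terms vanish and the conclusion is immediate), so that the hypothesis $0 \le x_{n+1} \le C^n x_n^\beta$ becomes $y_{n+1} \le n + \beta y_n$, while the initial bound $x_0 \le C^{-1/(\beta-1)^2}$ reads $y_0 \le -\frac{1}{(\beta-1)^2}$. The goal $x_n \to 0$ will follow once I show $y_n \to -\infty$, and in fact it suffices to show $y_n \le -c\beta^n$ for some $c > 0$ and all large $n$, or even just that $y_n$ stays below a fixed negative constant times something growing.

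Next I would unroll the recursion $y_{n+1} \le n + \beta y_n$ explicitly. Iterating gives
\[
y_n \le \beta^n y_0 + \sum_{k=0}^{n-1} \beta^{\,n-1-k}\, k .
\]
The geometric-type sum $\sum_{k=0}^{n-1}\beta^{n-1-k}k$ is bounded above by $\beta^{n}\sum_{j\ge 1} j\beta^{-j} = \beta^n \cdot \frac{\beta^{-1}}{(1-\beta^{-1})^2} = \frac{\beta^n}{(\beta-1)^2}$ (using $\beta>1$). Hence
\[
y_n \le \beta^n\Big(y_0 + \frac{1}{(\beta-1)^2}\Big) \le \beta^n\Big(-\frac{1}{(\beta-1)^2} + \frac{1}{(\beta-1)^2}\Big) = 0 .
\]
This only yields $y_n \le 0$, i.e. $x_n \le 1$, which is not enough; the point is that the initial bound is \emph{exactly at the threshold}, so I need a hair more room. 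The fix: the series $\sum_{j\ge 1} j\beta^{-j}$ equals $\frac{1}{(\beta-1)^2}$ only in the limit, so the \emph{finite} sum is strictly smaller, and moreover I can afford to start the estimate with $x_0 \le C_0^\ast$ giving a genuine gap once $n\ge 1$. Concretely, $\sum_{k=0}^{n-1}\beta^{n-1-k}k = \beta^n\sum_{j=1}^{n}j\beta^{-j} \le \beta^n\big(\frac{1}{(\beta-1)^2} - \delta_n\big)$ where $\delta_n := \sum_{j>n} j\beta^{-j} > 0$, so
\[
y_n \le \beta^n\Big(y_0 + \frac{1}{(\beta-1)^2} - \delta_n\Big) \le -\beta^n \delta_n \le -\,n\,\beta^n \cdot \beta^{-n} = -n \quad\text{(crudely, since } \delta_n \ge n\beta^{-n}\text{)},
\]
wait — more carefully $\delta_n \ge (n+1)\beta^{-(n+1)}$, so $\beta^n\delta_n \ge \frac{n+1}{\beta} \to \infty$. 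Therefore $y_n \le -\frac{n+1}{\beta} \to -\infty$, which gives $x_n = C^{y_n} \le C^{-(n+1)/\beta} \to 0$ since $C > 1$.

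\textbf{Main obstacle.} The delicate point is precisely that the hypothesis $x_0 \le C^{-1/(\beta-1)^2}$ is sharp: a naive summation of the logarithmic recursion only produces $x_n \le 1$, with no decay. The real work is to extract the strict deficit between the finite geometric sum $\sum_{j=1}^n j\beta^{-j}$ and its infinite value $\frac{1}{(\beta-1)^2}$, and to check that this deficit, after multiplication by $\beta^n$, still tends to $+\infty$ (it does, since the tail $\sum_{j>n} j\beta^{-j}$ decays only like $n\beta^{-n}$, exactly compensating the $\beta^n$ factor up to the linear factor $n$). One should also handle at the outset the trivial/degenerate cases ($x_0 = 0$, or $x_n = 0$ for some $n$) separately so that taking logarithms is legitimate, and record that $C > 1$ is what converts $y_n \to -\infty$ into $x_n \to 0$.
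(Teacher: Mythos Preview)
Your approach is correct in spirit but contains two arithmetic slips that happen to cancel.  First, the claimed identity $\sum_{k=0}^{n-1}\beta^{n-1-k}k = \beta^n\sum_{j=1}^{n}j\beta^{-j}$ is false: the left-hand side equals $\beta^{n-1}\sum_{j=1}^{n-1}j\beta^{-j}$.  Second, $\sum_{j\ge 1}j\beta^{-j}=\dfrac{\beta}{(\beta-1)^2}$, not $\dfrac{1}{(\beta-1)^2}$.  Fortunately the two errors compensate exactly in the leading term, so the crucial bound $S_n\le \dfrac{\beta^n}{(\beta-1)^2}$ survives, and your tail argument then correctly gives $y_n\le -\dfrac{n}{\beta}\to-\infty$ (with the corrected indices the tail starts at $j=n$ rather than $j=n+1$, which only helps).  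So the proof stands after these cosmetic repairs.

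The paper takes a noticeably different and shorter route.  Instead of iterating the logarithmic recursion and estimating a finite geometric--type sum against its infinite tail, it makes the single multiplicative substitution
\[
y_n\;:=\;C^{\frac{1+n(\beta-1)}{(\beta-1)^2}}\,x_n,
\]
which collapses the hypothesis to the clean inequality $y_{n+1}\le y_n^{\beta}$ with $y_0\le 1$; an immediate induction gives $y_n\le 1$, hence $x_n\le C^{-\frac{1+n(\beta-1)}{(\beta-1)^2}}\to 0$.  In logarithmic language this amounts to guessing the exact particular solution of the linear recurrence $z_{n+1}=n+\beta z_n$ and subtracting it, so no series estimate or ``deficit'' analysis is needed.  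The paper's bound also gives a sharper decay rate (exponent $\sim -n/(\beta-1)$ versus your $\sim -n/\beta$), though either suffices for the lemma.  Your method has the advantage of being mechanical and not requiring one to spot the right ansatz; the paper's has the advantage of being a three-line proof.
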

\begin{proof}[Proof of Lemma \ref{sq}]
Let us define $y_n:=C^{\frac{(1+n(\beta-1))}{(\beta-1)^2}}x_n$. We have
$$0\leq y_n\leq y_n^{\beta},\quad\mbox{and}\quad y_0\leq 1.$$
Hence,  $y_n\leq 1$, and then  $0\leq x_n\leq C^{-\frac{(1+n(\beta-1))}{(\beta-1)^2}}$ which implies that $\displaystyle\lim_{n\rightarrow +\infty}x_n=0.$
\end{proof}
Finally, we recall the  following parabolic regularizing effect due to Brezis-Cazenave \cite{Br} that we will use to obtain a locally (in time) bounded solution to \eqref{eq2} with singular data. Consider the following linear heat equation with potential
\begin{equation}
\label{heatpotential}
\left\{
\begin{matrix}
\partial_t u-\Delta u - a(t,x) u=0\quad\mbox{in}\quad \Omega,\\
u=0\quad\mbox{on}\quad \partial\Omega,\\
u(0)= u_0,\\
\end{matrix}
\right.
\end{equation}
where $\Omega$ is a smooth bounded domain of $\R^2$.

\begin{thm}[ see {\bf Theorem A.1} in \cite{Br}]
\label{BrCaz}
Let $0<T<\infty$, $\sigma>1$, and let $a\in L^\infty([0,T]; L^\sigma)$. Given $u_0\in L^r$, $1\leq r<\infty$, there exists a unique solution $u\in {\mathcal C}([0,T]; L^r)\cap L^\infty_{loc}([0,T]; L^\infty)$ of equation \eqref{heatpotential}.
\end{thm}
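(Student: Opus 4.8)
The plan is to prove Theorem~\ref{BrCaz}, the parabolic smoothing estimate for the linear heat equation with an $L^\infty_t L^\sigma_x$ potential. The statement asserts existence and uniqueness of a solution $u\in\mathcal C([0,T];L^r)\cap L^\infty_{\mathrm{loc}}((0,T];L^\infty)$; since we may invoke everything stated earlier (in particular the smoothing estimate of Lemma~\ref{smw}), the scheme is a contraction/bootstrap argument built on Duhamel's formula
\[
u(t)={\rm e}^{t\Delta}u_0+\int_0^t{\rm e}^{(t-s)\Delta}\big(a(s)u(s)\big)\,ds .
\]
First I would fix a small time $\tau>0$ and set up a fixed point for the map $\Phi(u)(t)={\rm e}^{t\Delta}u_0+\int_0^t{\rm e}^{(t-s)\Delta}(a(s)u(s))\,ds$ in a weighted space of the form $X_\tau=\{u\in\mathcal C((0,\tau];L^{r_1}) : \sup_{0<t\le\tau} t^{\theta}\|u(t)\|_{L^{r_1}}<\infty\}$ for a suitably chosen exponent $r_1>r$ (so that $\frac1{r}=\frac1\sigma+\frac1{r_1}$ up to the critical choice, and the weight exponent $\theta$ is dictated by scaling in Lemma~\ref{smw}). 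The key estimate is that by H\"older, $\|a(s)u(s)\|_{L^q}\le\|a(s)\|_{L^\sigma}\|u(s)\|_{L^{r_1}}$ with $\frac1q=\frac1\sigma+\frac1{r_1}$, and then Lemma~\ref{smw} converts the $L^q\to L^{r_1}$ (or $L^q\to L^r$) heat flow into an integrable-in-$s$ power of $(t-s)$; the time integral $\int_0^t(t-s)^{-\mu}s^{-\theta\cdot(\text{something})}\,ds$ is a Beta function, finite precisely because $\sigma>1$ makes the relevant exponents subcritical. Choosing $\tau$ small (or simply using that $\|a\|_{L^\infty_t L^\sigma}$ multiplies the whole nonlocal term) makes $\Phi$ a contraction, giving a unique local solution; one also checks $\Phi$ maps into $\mathcal C([0,\tau];L^r)$ using dominated convergence and the strong continuity of the heat semigroup on $L^r$.

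Second, I would upgrade regularity by a finite bootstrap: given $u\in L^\infty((0,\tau];L^{r_1})$, feed it back into Duhamel and use Lemma~\ref{smw} again with $\beta=q$, $\gamma=r_2>r_1$ to gain integrability, iterating $r_1<r_2<\cdots$ until the exponent exceeds any finite number, and finally land in $L^\infty((\epsilon,\tau];L^\infty)$ for every $\epsilon>0$; each step only costs a fixed power of $t$ in the weight and the gain per step is bounded below (this is the same mechanism as in Lemma~\ref{int}/Proposition~\ref{nrg}). The bound near $t=0$ is necessarily only $L^r$, hence the $L^\infty_{\mathrm{loc}}$ (away from $0$) conclusion; away from $0$ one can also absorb $a$ into the equation and run a genuine parabolic $L^p$--$L^\infty$ argument.

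Third, for global existence on $[0,T]$ and uniqueness in the full class $\mathcal C([0,T];L^r)$: since the nonlinearity here is \emph{linear} in $u$, the a priori bound is automatic — Gronwall applied to $\|u(t)\|_{L^r}\le C\|u_0\|_{L^r}+C\int_0^t(t-s)^{-\mu}\|a(s)\|_{L^\sigma}\|u(s)\|_{L^{r_1}}\,ds$ (after the first smoothing step gives control of the higher norm on $(0,\tau]$, then reabsorbing) closes with no smallness needed, so the local solution extends to all of $[0,T]$ by the usual continuation argument. Uniqueness follows by the same weighted estimate applied to the difference of two solutions $w=u-v$, which satisfies the homogeneous Duhamel identity $w(t)=\int_0^t{\rm e}^{(t-s)\Delta}(a(s)w(s))\,ds$; the Beta-function bound gives $\sup_{0<t\le\tau}t^\theta\|w(t)\|_{L^{r_1}}\le (\text{small})\sup\|w\|$, forcing $w\equiv0$ on $[0,\tau]$ and then, by iterating in time, on all of $[0,T]$.

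The main obstacle is the borderline case: choosing the auxiliary exponents so that the H\"older pairing $\frac1q=\frac1\sigma+\frac1{r_1}$, the target $r$, and the smoothing exponent in Lemma~\ref{smw} all fit together while keeping the time-integral exponents strictly below $1$ (so the Beta integral converges). This is exactly where the hypothesis $\sigma>1$ is used — if $\sigma=1$ the relevant exponent becomes critical and the fixed-point weight degenerates — so I would be careful to track that the admissible window for $r_1$ is non-empty precisely when $\sigma>1$, and to handle the endpoint $r$ close to the scaling-critical value by interpolation rather than a direct estimate. Everything else (strong continuity at $t=0$, the bootstrap chain, Gronwall) is routine once the weighted space is correctly calibrated.
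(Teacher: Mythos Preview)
The paper does not prove this theorem at all: Theorem~\ref{BrCaz} is stated purely as a citation (``see Theorem~A.1 in \cite{Br}'') and is used as a black box later in Proposition~\ref{smm}. So there is no proof in the paper to compare your proposal against.

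That said, your sketch is a reasonable outline of how the Brezis--Cazenave argument actually goes: Duhamel plus the $L^\beta\to L^\gamma$ smoothing of Lemma~\ref{smw}, a weighted fixed point in an auxiliary $L^{r_1}$ with $r_1>r$, a finite bootstrap to reach $L^\infty$ away from $t=0$, and linearity/Gronwall for global-in-$[0,T]$ existence and uniqueness. The one place I would tighten is the choice of exponents: you want $r_1$ with $\frac{1}{r_1}<\frac{1}{r}$ and $\frac{1}{q}=\frac{1}{\sigma}+\frac{1}{r_1}\le 1$, and then the smoothing exponent $\frac{1}{q}-\frac{1}{r_1}=\frac{1}{\sigma}<1$ is exactly what makes the time integral convergent --- this is where $\sigma>1$ enters, as you note. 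Your bootstrap gain per step is $\frac{1}{r_k}-\frac{1}{r_{k+1}}$ bounded below by a fixed positive quantity depending only on $\sigma$, so finitely many steps reach $L^\infty$. None of this is in the present paper, though; if you are writing it up you should compare directly with the appendix of \cite{Br}.
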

%%%%%%%%%%%%%%%%%%%%%%%%%%%%%%%%%%%%%%%%%%%%%%%%%%%%%%%%%%%%%%%%%%%%%%%%%%%%%%%%%%%%%%%%%%%%%%%%%%%%%%%%%%%%%%%%%%%%%%%%%%%%%%%%%%%%%%%%%%%%%%%%%%%%%%%%%%%%%%%%%%%
\section{$H^1$-theory: proof of Theorem \ref{MainH1}}
%%%%%%%%%%%%%%%%%%%%%%%%%%%%%%%%%%%%%%%%%%%%%%%%%%%%%%%%%%%%%%%%%%%%%%%%%%%%%%%%%%%%%%%%%%%%%%%%%%%%%%%%%%%%%
This section is devoted to the proof of Theorem \ref{MainH1}. We divide the proof into several steps. First, we show the existence of a local solution to \eqref{eq2}, regardless of the sign of the nonlinearity. In the second step, we
prove the uniqueness in $C([0,T); H^1)$.  This result, is not straightforward, although it follows Brezis-Cazenave's steps. Then we show that in the defocusing case we can extend
the solution globally in time. Finally, we establish a finite time blow-up result in the focusing case.\\

\subsection{Local existence}

We summarize the result in the following Theorem.
\begin{thm}\label{t1}
Let $u_0\in H^1(\R^2)$. Then, there exist $T>0$ and a solution $u$ to \eqref{eq2} in the class
$$C([0,T);H^{1}({\mathbb{R}}^{2})).$$
\end{thm}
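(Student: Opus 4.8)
The plan is to construct a local solution by a standard fixed point argument on the Duhamel formula
$$
\Phi(u)(t) = {\rm e}^{t\Delta}u_0 + \int_0^t {\rm e}^{(t-s)\Delta} f(u(s))\,ds, \qquad f(u) = \pm u({\rm e}^{u^2}-1),
$$
in a suitable complete metric space. Since initial data in $H^1(\R^2)$ need not be bounded, the natural choice is a ball in $X_T := {\mathcal C}([0,T]; H^1(\R^2))$, possibly intersected with a space carrying the parabolic smoothing, and we seek $T$ small enough that $\Phi$ is a contraction there. First I would fix $R>0$ (typically $R \approx \|u_0\|_{H^1}$ or a small multiple thereof) and work in $E_{T,R} = \{ u \in X_T : \sup_{[0,T]}\|u(t)\|_{H^1} \le R\}$ with the metric inherited from $X_T$; the key point is that because $\|{\rm e}^{t\Delta}u_0\|_{H^1} \le \|u_0\|_{H^1}$, the linear part stays in the ball, and one only needs the Duhamel term to be small.

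The core estimates come from Proposition \ref{nrg} (the inequality \eqref{enrg}) together with the nonlinear bound \eqref{f1} of Lemma \ref{f} and the Moser--Trudinger inequality, Proposition \ref{prop3}. Concretely, applying \eqref{enrg} on $[0,T]$ gives
$$
\sup_{[0,T]}\|\Phi(u)(t)\|_{H^1} \le C\|u_0\|_{H^1} + C\,\|f(u)\|_{L^1([0,T];H^1)},
$$
so the task reduces to controlling $\int_0^T \|f(u(s))\|_{H^1}\,ds$ for $u \in E_{T,R}$. Here $f(u)$ and $\nabla f(u) = f'(u)\nabla u$ must be estimated in $L^2_x$; by Hölder, $\|f'(u)\nabla u\|_{L^2} \le \|f'(u)\|_{L^{p}}\|\nabla u\|_{L^{p'}}$ with $\|\nabla u\|_{L^{p'}}$ controlled for $p'<\infty$ via $H^1 \hookrightarrow L^{p'}$ — wait, one only has $\nabla u \in L^2$, so more carefully one writes $f'(u) = ({\rm e}^{u^2}-1) + 2u^2{\rm e}^{u^2}$ and uses that quantities like $({\rm e}^{(1+\varepsilon)u^2}-1)$ lie in every $L^q$ by Lemma \ref{int}, pairing them against $\nabla u \in L^2$ and $u \in L^q$. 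Since $\|u(s)\|_{H^1} \le R$ is merely bounded rather than small, the Moser--Trudinger exponent $\alpha u^2$ with $\alpha$ near $4\pi$ is not automatically below the threshold; one handles this by choosing $T$ small and splitting $u = {\rm e}^{t\Delta}u_0 + (\text{Duhamel})$, noting $\|{\rm e}^{t\Delta}u_0 - u_0\|_{H^1} \to 0$, so that on a short time interval the gradient norm of $u$ is close to $\|\nabla u_0\|_{L^2}$ plus a small correction, and a scaling/truncation argument (decompose $u_0$ into a smooth bounded piece with small $H^1$ remainder) reduces matters to the subcritical regime where Proposition \ref{prop3} applies with room to spare. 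This yields $\|f(u)\|_{L^1_TH^1} \le T\,\omega(R)$ for some finite $\omega(R)$ (after the splitting, with constants depending on $\|u_0\|_{H^1}$), and similarly for differences $\|f(u)-f(v)\|_{L^1_TH^1} \le T\,\omega(R)\,\sup_{[0,T]}\|u-v\|_{H^1}$ via \eqref{f1} and \eqref{f2}.

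Combining, for $T = T(\|u_0\|_{H^1})$ small enough we get $\Phi : E_{T,R} \to E_{T,R}$ with $CT\omega(R) \le \tfrac12$, hence $\Phi$ is a $\tfrac12$-contraction on $E_{T,R}$ (which is a nonempty closed subset of the Banach space $X_T$, hence complete), and the Banach fixed point theorem produces a unique fixed point $u \in {\mathcal C}([0,T];H^1(\R^2))$ solving the Duhamel equation; a routine argument shows this Duhamel solution is a solution of \eqref{eq2} in the appropriate (distributional/mild) sense, using Proposition \ref{int1} to guarantee $f(u) \in {\mathcal C}([0,T];L^1)$ so that the integral formula makes sense. The main obstacle I anticipate is precisely the nonlinear estimate in $H^1$: because the exponential nonlinearity together with only $\nabla u \in L^2$ control sits exactly at the borderline of Moser--Trudinger, one cannot simply quote Proposition \ref{prop3} — the careful decomposition of the data to create subcritical slack, and bookkeeping the $\varepsilon$-loss from Lemma \ref{f}, is the delicate technical heart of the argument; everything else (the contraction mechanics, continuity in time) is standard.
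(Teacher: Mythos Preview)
Your strategic instincts are right---Moser--Trudinger plus a decomposition of $u_0$ into a bounded piece and an $H^1$-small remainder is exactly the paper's route---but the way you organize the argument has two genuine gaps that the paper's \emph{two-step} structure is designed to close.

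First, the single fixed point in a ball $E_{T,R}\subset{\mathcal C}_TH^1$ of radius $R\approx\|u_0\|_{H^1}$ does not work: for a generic $u\in E_{T,R}$ there is \emph{no uniform bound} of the form $\|{\rm e}^{\alpha u^2}-1\|_{L^q}\le\omega(R)$. Lemma~\ref{int} gives finiteness for each fixed $u$, not a bound depending only on $\|u\|_{H^1}$, and Proposition~\ref{prop3} requires $\|\nabla u\|_{L^2}$ \emph{small}, not merely bounded. So your asserted bound $\|f(u)\|_{L^1_TH^1}\le T\,\omega(R)$ is unjustified. The paper avoids this by \emph{first} solving the full equation with the regularized data $S_Nu_0\in H^1\cap L^\infty$ (a separate fixed point in ${\mathcal C}_T(H^1\cap L^\infty)$, where $\|v\|_{L^\infty}$ supplies the missing uniform control of the exponential), obtaining a solution $v$, and \emph{then} running a second fixed point for the perturbed equation $\partial_t w-\Delta w=f(v+w)-f(v)$ with the small data $(I-S_N)u_0$. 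In that second step ${\rm e}^{C(v+w+w_l)^2}$ splits as ${\rm e}^{Cv^2}\cdot{\rm e}^{C(w+w_l)^2}$: the first factor is bounded since $v\in L^\infty$, and the second is uniformly controlled by Moser--Trudinger because $\|\nabla(w+w_l)\|_{L^2}\le r+\|(I-S_N)u_0\|_{H^1}$ is small. Your parenthetical ``decompose $u_0$ into a smooth bounded piece with small $H^1$ remainder'' is the right idea, but it must organize the whole proof rather than appear as a mid-estimate remark.

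Second, you correctly flag that $\|f'(u)\nabla u\|_{L^2}$ cannot be estimated by H\"older since $\nabla u$ is only in $L^2$, but you do not resolve it: pairing ${\rm e}^{(1+\varepsilon)u^2}-1\in L^q$ against $\nabla u\in L^2$ to land in $L^2$ would need $q=\infty$. In the $H^1\cap L^\infty$ step this is fine because $f'(v)\in L^\infty$. In the perturbed step the paper instead estimates $\nabla(f(v_1)-f(v_2))$ in $L^1$ and uses the $L^1\to L^2$ smoothing $\|{\rm e}^{(t-s)\Delta}g\|_{L^2}\lesssim(t-s)^{-1/2}\|g\|_{L^1}$; then H\"older gives $\|f'(\cdot)\nabla w\|_{L^1}\le\|f'(\cdot)\|_{L^2}\|\nabla w\|_{L^2}$, and $\|f'(\cdot)\|_{L^2}$ is finite by the splitting above. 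Once you restructure along these lines, the contraction closes.
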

\begin{proof}[Proof of Theorem \ref{t1}]
The idea here is similar to the one used in \cite{Ib4,Ib3,Ib2}. Indeed, we decompose the
initial data to a regular part and a small one. We prove the existence of a local solution $v$ to \eqref{eq2}
associated to the regular initial data. Then to recover a solution of our original problem we solve a perturbed equation satisfied by $w:=u-v$ with small data.\\
%%%%%%%%%%%%%%%%%%%%%%%%%%%%%%%%%%%%%%%%%%%%%%%%%%%%%%%%%%%%%%%%%%%%%%%%%%%%%%%%%%%%%%%

%%%%%%%%%%%%%%%%%%%%%%%%%%%%%%%%%%%%%%%%%%%%%%%%%%%%%%%%%%%%%%%%%%%%%%%%%%%%%%%%
We start by showing the local existence in $H^1\cap L^\infty(\R^2)$ as claimed in the following proposition.
\begin{prop}\label{lm01}
Let $u_0\in (H^1\cap L^{\infty})(\R^2)$. Then, there exists $T>0$ (depending upon $u_0$) and a solution $u$ to \eqref{eq2} in the class
$${\mathcal C}([0,T);(H^{1}\cap L^{\infty})({\mathbb{R}}^{2})).$$
\end{prop}
%%%%%%%%%%%%%%%%%%%%%%%%%%%%%%%%%%%%%%%%%%%%%%%%%%%%%%%%%%%%%%%%%%%%%%%%%%%%%%%%%%%%%%%%%%%%%%%%%%%%%%%%%%%%%%%%
\begin{proof}[Proof of Proposition \ref{lm01}]
 Let the space
 $$\widetilde{X}_T:= \mathcal C([0,T);H^1\cap L^{\infty}(\R^2)),\quad T>0,$$
 endowed with the norm
 $$\|u\|^{\widetilde{}}_T:=\|u\|_{L_T^{\infty}(H^1(\R^2))}+\|u\|_{L_T^{\infty}(L^{\infty}(\R^2))}.$$
 Recall that $(X_T,\|\,.\,\|^{\widetilde{}}_T)$ is a Banach space. Set $v:={\rm e}^{t\Delta}u_0$ and define the map
 $$\Phi:u\longmapsto \int_0^t{\rm e}^{(t-s)\Delta}f((u+v)(s))ds.$$
Let $B_{T}(r)$ be the ball in $\widetilde{X}_{T}$ with center zero and radius $r>0$. We prove that for some $T,r>0$, the map $\Phi$ is a contraction from $B_{T}(r)$ into itself.\\
Applying the energy estimate \eqref{enrg} to ${u_1},{u_2}\in B_{T}(r)$ and the smoothing effect \eqref{smoth}, we obtain
\begin{eqnarray*}
\|\Phi(u_1)-\Phi(u_2)\|^{\widetilde{}}_T&\lesssim&\|f(u_1+v)-f(u_2+v)\|_{L^{1}_T(L^{2}({\mathbb{R}}^{2}))}+\|\nabla(f(u_1+v)-f(u_2+v))\|_{L^{1}_T(L^{2}({\mathbb{R}}^{2}))}\\
&+&\|f(u_1+v)-f(u_2+v)\|_{L^{1}_T(L^{\infty}({\mathbb{R}}^{2}))}\\
&\lesssim&\mathcal{A}+\mathcal{B}+\mathcal{C}.
\end{eqnarray*}
Let us start to estimate $\mathcal{A}$. Set $w:=u_1-u_2$ and $v_i:=u_i+v$, $i\in\{1,2\}$. Using Lemma \ref{f}, we infer
\begin{eqnarray*}
\|f(v_1)-f(v_2)\|_{L^2(\R^2)}
&\lesssim&\|w\sum_{i=1,2}\Big({\rm e}^{  2v_i^2}-1\Big)\|_{L^2(\R^2)}\\
&\lesssim&\|w\|_{L^2}\sum_{i=1,2}{\rm e}^{  2\|v_i\|_{L^{\infty}}^2}.\\
\end{eqnarray*}
Using the fact that $\|v_i\|_{L^{\infty}}\leq r+\|u_0\|_{L^{\infty}}$, it follows that
\begin{eqnarray*}
\|f(v_1)-f(v_2)\|_{L^2(\R^2)}\lesssim\|w\|_{L^2}{\rm e}^{2  (r+\|u_0\|_{L^{\infty}(\R^2)})^2}.
\end{eqnarray*}
Therefore
\begin{eqnarray*}
\mathcal{A}
&\lesssim&{\rm e}^{  2(r+\|u_0\|_{L^{\infty}(\R^2)})^2}T\|w\|_{L^{\infty}(0,T, L^2(\R^2))}\\
&\leq &C_{0,r}T\|w\|^{\widetilde{}}_{T}.
\end{eqnarray*}
Similarly, we have
\begin{eqnarray*}
\mathcal{C}\leq C_{0,r}T\|w\|^{\widetilde{}}_{T}.
\end{eqnarray*}
Now, let us estimate the second term $\mathcal{B}$. We have
\begin{eqnarray*}
\|\nabla(f(v_1)-f(v_2))\|_{L^2(\R^2)}
&=&\|\nabla v_1 (f^{'}(v_1)-f^{'}(v_2))+(\nabla v_1-\nabla v_2) f^{'}(v_2)\|_{L^2(\R^2)}\\
&\leq&\|\nabla v_1 (f^{'}(v_1)-f^{'}(v_2))\|_{L^2(\R^2)}+\|\nabla w f^{'}(v_2)\|_{L^2(\R^2)}\\
&\leq& \mathcal{B}_1+\mathcal{B}_2.
\end{eqnarray*}
Arguing as before, we obtain
\begin{eqnarray*}
\mathcal{B}_2\lesssim {\rm e}^{2(r+\|u_0\|_{L^\infty})^2}\|w\|_{H^1(\R^2)}.
\end{eqnarray*}
It remains to estimate $\mathcal{B}_1$. Using \eqref{f2}, we infer
\begin{eqnarray*}
\mathcal{B}_1&\lesssim&\sum_{i=1,2}\|\nabla v_1w({\rm e}^{4  v_i^2}-1)^{1/2}\|_{L^2(\R^2)} \\
&\lesssim&\|\nabla v_1\|_{L^2(\R^2)}\|w\|_{L^{\infty}}\sum_{i=1,2}{\rm e}^{2\|v_i\|^2_{L^\infty}}\\
&\lesssim&\|w\|_{L^{\infty}}\|\nabla v_1\|_{L^2(\R^2)}{\rm e}^{2(r+\|u_0\|_{L^\infty})^2}.
\end{eqnarray*}
Hence
\begin{eqnarray*}
\mathcal{B}\lesssim (1+r+\|u_0\|_{H^1}) {\rm e}^{2(r+\|u_0\|_{L^\infty})^2} T\|w\|^{\widetilde{}}_T\leq C_{0,r} T\|w\|^{\widetilde{}}_T,
\end{eqnarray*}
which implies that
\begin{equation}\label{4}
\|\Phi(u_1)-\Phi(u_2)\|^{\widetilde{}}_T\leq C_{0,r}T\|w\|^{\widetilde{}}_T.
\end{equation}
Now, let us estimate $\|\Phi(u_1)\|^{\widetilde{}}_T$. Using \eqref{nrg}, we deduce
$$\|\Phi(u_1)\|^{\widetilde{}}_T\leq C(\|f(v_1)\|_{L_T^1(H^1(\R^2))}+\|f(v_1)\|_{L_T^1(L^{\infty}(\R^2))}).$$
On the other hand, taking in the precedent computations $v_2=0$, we obtain
\begin{eqnarray*}
\|\Phi(u_1)\|^{\widetilde{}}_T
&\leq& C_{0,r}T\|v_1\|_T\\
&\leq& C_{0,r}T.
\end{eqnarray*}
It follows that for $r,T>0$ small enough, $\Phi$ is a contraction of a ball of $X_T$. Let $u$ to be the fixed point of $\Phi$. Then $u+v$ is a local solution to \eqref{eq2}. This concludes the proof of Proposition \ref{lm01}.
\end{proof}
\begin{rem}Note that
$$\|u\|^{\widetilde{}}_T\leq r+C\|u_0\|_{H^1\cap L^{\infty}}.$$
\end{rem}

\medskip

Now we solve the perturbed problem. We decompose the initial data as follows
$u_0=(I-S_N)u_0+S_N u_0$ where $S_N=\sum_{j\leq N-1}\;\triangle_j$, $(\triangle_j)$ being an inhomogeneous frequency localization, and $N$ is a large integer to be fixed later.
Recall that $\|(I-S_N)u_0\|_{H^1}\stackrel{N}{\longrightarrow} 0$ and $S_N u_0\in (H^1\cap L^{\infty})(\R^2)$.\\
By Proposition \ref{lm01}, there exist a time $T_N>0$ and a solution $v$ to the problem \eqref{eq2} with data $S_N u_0$. Now, we consider the perturbed problem satisfied by $w:=u-v$ and with data $(I-S_N)u_0$. Namely, let
\begin{equation}
\label{0eq1}
\left\{
\begin{matrix}
\partial_t w-\Delta w=-f(v)+f(v+w)\\
u(0)= (I-S_N)u_0.\\
\end{matrix}
\right.
\end{equation}
Using a standard fixed point argument, we shall prove that $\eqref{0eq1}$ has a local solution in the space $X_T:=\mathcal C([0,T);H^1(\R^2))$ for a suitable $T>0$ to be chosen.\\
We denote by
$\|u\|_T:=\|u\|_{L^{\infty}([0,T]; H^1(\R^2))}$ and we recall that $(X_T,\|\,.\,\|_T)$ is a Banach space. \\
Set $w_l:={\rm e}^{t\Delta}(I-S_N)u_0$
and consider the map
 $$\Psi:u\longmapsto \int_0^t{\rm e}^{(t-s)\Delta}(f(u+v+w_l)-f(v))(s))ds.$$
Let $B_{T}(r)$ be the ball in $X_{T}$ of radius $r>0$ and centered at the origin. We prove that for some $T,r>0$, the map $\Psi$ is a contraction from $B_{T}(r)$ into itself.\\
Applying the energy estimate \eqref{enrg} to ${u_1},{u_2}\in B_{T}(r)$ and using the smoothing effect \eqref{smoth}, we infer
\begin{eqnarray*}
\|\Psi(u_1)-\Psi(u_2)\|_{T}&\lesssim&\|f(u_1+v+w_l)-f(u_2+v+w_l)\|_{L^{1}_T(L^{2}({\mathbb{R}}^{2}))}\\
&+&{T}^{\frac{3}{2}}\|\nabla(f(u_1+v+w_l)-f(u_2+v+w_l))\|_{L^{\infty}_T(L^{1}({\mathbb{R}}^{2}))}.
\end{eqnarray*}
Set $w:=u_1-u_2$ and $v_i:=u_i+v+w_l$. Using Lemma \ref{f}, we obtain
\begin{eqnarray*}
\|f(v_1)-f(v_2)\|_{L^2(\R^2)}
&\lesssim&\sum_{i=1,2}\|w({\rm e}^{2 v_i^2}-1)\|_{L^2(\R^2)}
\end{eqnarray*}
Since $|v_i|^2\leq 2(w_l+u_i)^2+2v^2$ and using the simple observation
\begin{eqnarray}
{\rm e}^{a+b}-1=({\rm e}^a-1)({\rm e}^b-1)+({\rm e}^a-1)+({\rm e}^b-1)
\end{eqnarray}
we have,
\begin{eqnarray*}
\|w({\rm e}^{2  v_i^2}-1)\|_{L^2}\leq \|w({\rm e}^{4  v^2}-1)\|_{L^2}&+&\|w({\rm e}^{4  (u_i+w_l)^2}-1)\|_{L^2}\\
&+&\|w({\rm e}^{4  v^2}-1)({\rm e}^{2(1+\epsilon)(u_i+w_l)^2}-1)\|_{L^2}.
\end{eqnarray*}
By H\"older inequality and Sobolev embedding, we have
$$
\|w({\rm e}^{4  v^2}-1)\|_{L^2}\leq \|w\|_{L^2}{\rm e}^{4\|v\|^2_{L^\infty}}\lesssim {\rm e}^{4\|v\|^2_{L^\infty}} \|w\|_{H^1},
$$
and
\begin{eqnarray*}
\|w({\rm e}^{4  (u_i+w_l)^2}-1)\|_{L^2}&\leq &\|w\|_{L^6}\|{\rm e}^{4(u_i+w_l)^2}-1\|_{L^3}\\
&\lesssim& \|w\|_{H^1}\|{\rm e}^{4(u_i+w_l)^2}-1\|_{L^3}.
\end{eqnarray*}
Denoting $\varepsilon_n:=\|(I-S_N)u_0\|_{H^1}$, we have $\|\nabla(u_i+w_l)\|_{L^2}\lesssim r+\varepsilon_n\stackrel{r,n}{\longrightarrow}0$. Hence, for $\alpha>0$, $p\geq 1$ and thanks to Moser-Trudinger  inequality we derive
\begin{eqnarray}\label{mtr}
\|{\rm e}^{\alpha (u_i+w_l)^2}-1\|_{L^p}&\leq& \|{\rm e}^{\alpha p(u_i+w_l)^2}-1\|_{L^1}^{\frac{1}{p}}\\
&\lesssim& (r+\varepsilon_n)^{2/p},\nonumber
\end{eqnarray}
and
$$
\|w({\rm e}^{4  (u_i+w_l)^2}-1)\|_{L^2}\lesssim \|w\|_{H^1}(r+\varepsilon_n)^{2/3}.
$$
Consequently,
$$
\|w({\rm e}^{4  v^2}-1)({\rm e}^{2(1+\epsilon)(u_i+w_l)^2}-1)\|_{L^2}\lesssim {\rm e}^{4\|v\|^2_{L^\infty}}\|w\|_{H^1}(r+\varepsilon_n)^{2/3}.
$$
Therefore,
\begin{eqnarray*}
\|f(v_1)-f(v_2)\|_{L_T^{1}(L^2)}
\leq C_{0,r}T\|w\|_{L_T^{\infty}H^1(\R^2)},
\end{eqnarray*}
where the constant $C_{0,r}$ depends only on $u_0$ and $r$. It remains to control $\|\nabla(f(v_1)-f(v_2))\|_{L_T^{\infty}(L^1(\R^2))}.$ We have
\begin{eqnarray*}
\|\nabla(f(v_1)-f(v_2))\|_{L^1(\R^2)}
&=&\|\nabla v_1 (f^{'}(v_1)-f^{'}(v_2))+(\nabla v_1-\nabla v_2) f^{'}(v_2)\|_{L^1(\R^2)}\\
&\leq&\|\nabla v_1 (f^{'}(v_1)-f^{'}(v_2))\|_{L^1(\R^2)}+\|\nabla w f^{'}(v_2)\|_{L^1(\R^2)}\\
&\leq& {\mathbf E}+{\mathbf F}.
\end{eqnarray*}
Arguing as before, we have
$${\mathbf E}\lesssim \|v\|_{H^1}{\rm e}^{4\|v\|^2_{L^\infty}}(1+(r+\varepsilon_n)^{1/2})\|w\|_{H^1},$$ and $${\mathbf F}\lesssim (\|v\|_{H^1}+2r+2\varepsilon_n){\rm e}^{4\|v\|^2_{L^\infty}}\|w\|_{H^1}.$$ Therefore
\begin{eqnarray*}
\|\nabla(f(v_1)-f(v_2))\|_{L_T^{\infty}(L^1(\R^2))}
&\leq&C_{0,r}\|w\|_T,
\end{eqnarray*}
which implies that
\begin{equation}%\label{4}
\|\Psi(u_1)-\Psi(u_2)\|_{T}\leq C_{0,r}(1+T)T^{\frac{1}{2}}\|w\|_T.
\end{equation}
Now we estimate $\|\Psi(u_1)\|_{T}$. Using \eqref{nrg}, we obtain
$$\|\Psi(u_1)\|_{T}\leq C\|f(v_1)-f(v)\|_{L_T^1(L^2(\R^2))}+ T^{\frac{3}{2}}\|\nabla (f(v_1)-f(v))\|_{L_T^{\infty}(L^1(\R^2))},$$
where we set $v_1:=u_1+v+w_l$. Taking $v_2=v$, in the precedent computations, we have
\begin{eqnarray*}
\|\Psi(u_1)\|_{T}
&\leq& C_{0,r}(1+T)T^{\frac{1}{2}}\|u_1+w_l\|_T\\
&\leq& C_{0,r}(r+\|u_0\|_{H^1(\R^2)})(1+T^{\frac{1}{2}})T^{\frac{1}{2}}
\end{eqnarray*}
In conclusion, for $T$ small enough, $\Psi$ is a contraction of some ball of $X_T$. We obtain the desired solution by taking $u+w_l$ where $u$ is the fixed point of $\Psi$. The proof is achieved.
\end{proof}
%%%%%%%%%%%%%%%%%%%%%%%%%%%%%%%%%%%%%%%%%%%%%%%%%%%%%%%%%%%%%%%%%%%%%%%%%%%%%%%%%%%%%%%%%%%%%%%%%%%%%%%%%%%%%%%%%%%%%
\subsection{Uniqueness in ${\mathcal C}([0,T[; H^1(\R^2))$}
%%%%%%%%%%%%%%%%%%%%%%%%%%%%%%%%%%%%%%%%%%%%%%%%%%%%%%%%%%%%%%%%%%%%%%%%%%%%%%%%%%%%%%%%%%%%%%%%%%%%%%%%%%%%
This subsection is devoted to the proof of the uniqueness part of Theorem \ref{MainH1}. More precisely, we prove an unconditional uniqueness result.
\begin{thm}\label{Uniq}
The solution given in Theorem \ref{t1} is unique in the class
$${\mathcal C}([0,T);H^{1}({\mathbb{R}}^{2})).$$
\end{thm}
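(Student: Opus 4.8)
The plan is to follow the Brezis--Cazenave strategy adapted to the exponential nonlinearity. Let $u_1,u_2\in\mathcal C([0,T);H^1(\R^2))$ be two solutions of \eqref{eq2} with the same data $u_0$, and set $w:=u_1-u_2$, so that $w$ solves $\partial_t w-\Delta w=f(u_1)-f(u_2)$ with $w(0)=0$. Because we do \emph{not} assume a priori that $w$ lies in $L^\infty_{\mathrm{loc}}((0,T);L^\infty)$, we cannot close a Gronwall estimate directly in $H^1$ or $L^2$; the core of the argument is an interpolation/bootstrap in a family of Lebesgue spaces $L^p$ with $p$ large, exactly as in \cite{Br}. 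First I would fix a small time interval $[0,\tau]$ and work in $\mathcal C([0,\tau];L^p)$ for a suitable exponent $p\in(2,\infty)$ to be chosen; since $u_i\in\mathcal C([0,T);H^1)\hookrightarrow\mathcal C([0,T);L^q)$ for every $q<\infty$, and by Lemma~\ref{int} and Proposition~\ref{int1} the quantities ${\rm e}^{\alpha u_i^2}-1$ lie in $\mathcal C([0,T);L^r)$ for every finite $r$, all the nonlinear terms that will appear are genuinely finite.

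The main step is the nonlinear estimate. Using the pointwise bound \eqref{f1} from Lemma~\ref{f},
$$
|f(u_1)-f(u_2)|\le C_\varepsilon\,|w|\sum_{i=1,2}\bigl({\rm e}^{(1+\varepsilon)u_i^2}-1\bigr),
$$
and H\"older's inequality with exponents chosen so that $\frac1p=\frac1p+\frac1{p'}$ splits the right-hand side into $\|w\|_{L^p}$ times $\bigl\|{\rm e}^{(1+\varepsilon)u_i^2}-1\bigr\|_{L^{p'}}$, which is finite and continuous in $t$ by the results just recalled. Then I would apply the smoothing estimate \eqref{smoth} to the Duhamel formula $w(t)=\int_0^t{\rm e}^{(t-s)\Delta}\bigl(f(u_1)-f(u_2)\bigr)(s)\,ds$ with a gain $L^\beta\to L^p$ for a $\beta<p$ tuned so that the time singularity $(t-s)^{-(1/\beta-1/p)}$ is integrable, i.e. $1/\beta-1/p<1$. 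This yields
$$
\sup_{t\in[0,\tau]}\|w(t)\|_{L^p}\le C\Bigl(\sup_{[0,\tau]}\sum_i\bigl\|{\rm e}^{(1+\varepsilon)u_i^2}-1\bigr\|_{L^{\rho}}\Bigr)\,\tau^{\theta}\,\sup_{t\in[0,\tau]}\|w(t)\|_{L^p}
$$
for some $\theta>0$ and a finite $\rho$, where the bracketed factor is finite by Proposition~\ref{int1}. Choosing $\tau$ small enough that the prefactor is $<1$ forces $w\equiv0$ on $[0,\tau]$.

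Finally, to upgrade from a short interval to the whole $[0,T)$, I would run a continuity/continuation argument: the set $\{t\in[0,T):u_1=u_2\text{ on }[0,t]\}$ is closed by continuity, nonempty, and the local estimate above — whose smallness threshold $\tau$ depends only on $\sup_{[0,T)}\sum_i\|{\rm e}^{(1+\varepsilon)u_i^2}-1\|_{L^\rho}$, which is bounded on compact subintervals because $u_i\in\mathcal C([0,T);H^1)$ — shows it is open in $[0,T)$, hence equals $[0,T)$. The delicate point, and the one I expect to be the main obstacle, is the bookkeeping of exponents: one must simultaneously satisfy (i) $H^1\hookrightarrow L^p$ with $p$ large enough that the conjugate space $L^{p'}$ can absorb the exponential factor via Moser--Trudinger/Lemma~\ref{int}, (ii) the heat-kernel gain $1/\beta-1/p<1$ so the time integral converges, and (iii) $w(0)=0$ together with $w\in\mathcal C([0,\tau];L^p)$ (which holds because $w=u_1-u_2$ with $u_i\in\mathcal C(H^1)\hookrightarrow\mathcal C(L^p)$), so that the fixed-point/smallness argument genuinely closes in the space where $w$ is only known to live. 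Getting a consistent choice of $(p,\beta,\varepsilon,\rho)$ — rather than any single estimate — is where the proof really has content, just as in \cite{Br}.
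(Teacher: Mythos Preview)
Your approach is correct (modulo the evident slip $\frac1p=\frac1p+\frac1{p'}$, which should read $\frac1\beta=\frac1p+\frac1\rho$), but it is genuinely different from the paper's route. The paper does not close a Gronwall inequality via Duhamel and heat smoothing. Instead it rewrites $w=u-v$ as a solution of the \emph{linear} equation $\partial_t w-\Delta w=a(t,x)w$, with potential $a=\frac{f(u)-f(v)}{u-v}$ (extended by $f'(u)$ where $u=v$), and proves a general lemma (Lemma~\ref{un}): if $a\in\mathcal C([0,T];L^p)$, $w\in L^\infty((0,T);L^q)$ and $w(t)=\int_0^t e^{(t-s)\Delta}a(s)w(s)\,ds$, then $w\equiv0$. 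That lemma is proved by \emph{duality}: one truncates $a$ to $a_n=\min\{n,\max\{a,-n\}\}$, solves the backward problem $-\partial_t v_n-\Delta v_n=a_n v_n$ with terminal data $\psi\in\mathcal D$, and shows $\int w(t_0)\psi=\int\!\!\int(a-a_n)\,w\,v_n\to0$ using $a_n\to a$ in $\mathcal C_tL^p_x$ together with a uniform $L^{r'}$ bound on $v_n$. The continuity in time of $a$, obtained from Proposition~\ref{int1}, is the crux of that argument.

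Your direct Gronwall closes here precisely because ${\rm e}^{(1+\varepsilon)u_i^2}-1$ lies in \emph{every} $L^\rho$, $\rho<\infty$, so the smoothing exponent $(t-s)^{-1/\rho}$ is always subcritical; this is special to the $2D$/$H^1$ setting and would fail in the Brezis--Cazenave critical regime $q=q_c$ in $d\ge3$, where the analogous exponent is exactly $-1$. Both arguments rest on Proposition~\ref{int1} (you need it to bound $M=\sup_{[0,\tau]}\sum_i\|{\rm e}^{(1+\varepsilon)u_i^2}-1\|_{L^\rho}$ via continuity on a compact interval; the paper needs it to place $a$ in $\mathcal C([0,T];L^2)$). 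The paper's duality lemma yields a reusable tool for linear heat equations with continuous-in-time potentials; your route is shorter and the ``bookkeeping of exponents'' you flag as delicate is in fact trivial here---any pair $p\ge2$, $\rho>1$ with $1/p+1/\rho\le1$ works.
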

\begin{proof}[Proof of Theorem \ref{Uniq}]
Let $u$, $v\in \mathcal C([0,T];H^1)$ be two solutions to \eqref{eq2} with same data $u_0$ and set $w:=u-v$. Define the potential
$$
a(t,x):=\left\{\begin{array}{rl}
&\frac{f(u)-f(v)}{w},\quad \mbox{if }w\neq 0\\
&f^{'}(u),\quad\quad \mbox{if } w=0 ,
\end{array}\right.
$$
so that,
$$
w(t)=\int_0^t {\rm e}^{(t-s)\Delta}a(s)w(s)\;ds.
$$The following Lemma can be seen as an extension of Brezis-Casenave's result \cite{Br} to the two dimensional case. The crucial point is to show the continuity of the potential term (continuity at $t=0$). As pointed out in \cite{Br}, this result seems to be open if the potential is only $L^{\infty}$ in time.
\begin{lem}\label{un}
Let $a\in \mathcal C([0,T];L^p(\R^2))$ and $u\in L^{\infty}((0,T); L^q(\R^2))$ with $2\leq q<\infty,1< p< \infty$, $\frac{1}{p}+\frac{1}{q}\neq 1$ and such that
$$
u(t)=\int_0^t{\rm e}^{(t-s)\Delta}a(s)u(s)ds,\quad\forall\;t\in [0,T].
$$
Then $u=0$ on $[0,T]$.
\end{lem}
\begin{proof}
It is clear that $au\in L^\infty([0,T), L^r(\R^2))$ where
$\displaystyle{\frac{1}{r}=\frac{1}{q}+\frac{1}{p}}$ ($1< r<\infty$), so that by maximal regularity $u\in L^{\widetilde{p}}((0,T),W^{1,r}(\R^2))$ for all ${\widetilde{p}}<\infty$ and satisfies for almost every $t\in (0,T)$ the next equation in $L^r(\R^2)$ ,
\begin{equation}
\label{u}
\partial_tu-\triangle u=a u\,.
\end{equation}
Let $t_0\in[0,T]$, $\psi\in D(\R^2)$ and $a_n:=\min\{n,\max\{a,-n\}\}$. Denote by $v_n$ the solution to
$$\left\{\begin{array}{cccc}
 -\partial_tv_n-\Delta v_n&=&a_nv_n\quad \mbox{in }(0,t_0)\times\R^2,\\
v_n(t_0)=\psi.
\end{array}\right.$$
Multiplying \eqref{u} by $v_n$ and then integrating on $(0,t_0)\times\R^2$, we have
$$
\int_0^{t_0}\int_{\R^2}(\partial_t u v_n-\Delta u v_n)\;dx\;dt=\int_0^{t_0}\int_{\R^2}a u v_n\;dx\;dt
$$
Hence
\begin{eqnarray}\label{hol}
\int_{\R^2}u(t_0)\psi\;dx
&=&\int_0^{t_0}\partial_t (u v_n)\;dx\;dt\nonumber\\
&=&\int_0^{t_0}\int_{\R^2} u  \partial_t{v_n}+(\Delta u+au) v_n\;dx\;dt\nonumber\\
&=&\int_0^{t_0}\int_{\R^2} (a-a_n)uv_n\;dx\;dt.
\end{eqnarray}
In order to prove that $u=0$ on $[0,T]$ it is sufficient to show that
\begin{equation}\label{a}
a_n\stackrel{n\rightarrow\infty}{\longrightarrow}a \mbox{ in } \mathcal C([0,T]; L^{p}(\R^2))
\end{equation}
 and
 \begin{equation}\label{v}
\sup_{n\geq 0}\|v_n\|_{L^\infty([0,t_0),L^{r'}(\R^2))}\leq C_{r'}\|\psi\|_{L^{r'}(\R^2)},
\end{equation}
where $\displaystyle{\frac{1}{r'}=1-\frac{1}{r}}$.
 First let us prove \eqref{a}. By contradiction, there exist $\eta_0>0$ such that $\displaystyle\sup_{0\leq t\leq T}\|a_n(t)-a(t)\|_{L^{p}}>\eta_0$ uniformly on $n\in\N$. So, for fixed $n\in\N$, there exist $s\in[0,T]$ and a real sequence $t_k \in [0,T]$, $t_k\stackrel {k\rightarrow\infty}{\longrightarrow} s$ such that
\begin{equation}\label{e}
\|a_n(t_k)-a(t_k)\|_{L^{p}}\stackrel{k\rightarrow\infty}{\longrightarrow}\sup_{0\leq t\leq T}\|a_n(t)-a(t)\|_{L^{p}}.
\end{equation}
This implies that, for any $k\in\N$,
\begin{equation}\label{e1}
\|a_n(t_k)-a(t_k)\|_{L^{p}}\geq \eta_0\quad\forall k\in\N.
\end{equation}
Since $a_n(t,x)\stackrel{n\rightarrow\infty}{\longrightarrow}a(t,x)$ almost everywhere, $|a_n|\leq|a|$ and $a(t)\in L^{p}(\R^2)$, by
Lebesgue Theorem , we have
\begin{equation}\label{e1'}
\|a_n(t)-a(t)\|_{L^{p}}\stackrel{n\rightarrow\infty}{\longrightarrow} 0\quad\forall t\in[0,T].
\end{equation}
Writing
$$
\|a_n(t_k)-a(t_k)\|_{L^{p}}\leq \|a_n(t_k)-a(s)\|_{L^{p}}+\|a(s)-a(t_k)\|_{L^{p}},
$$
and using the fact that $a\in \mathcal C([0,T]; L^{p}(\R^2))$, we deduce
$$\eta_0\leq
\limsup_{k\longrightarrow \infty}\|a_n(t_k)-a(t_k)\|_{L^{p}}\leq\limsup_{k\longrightarrow \infty}\|a_n(t_k)-a(s)\|_{L^{p}}=\|a_n(s)-a(s)\|_{L^{p}}.
$$

This obviously contradicts \eqref{e1}-\eqref{e1'}.\\
%%%%%%%%%%%%%%%%%%%%%%%%%%%%%%%%%%%%%%%%%%%%%%%%%%%%%%%%%%%%%%%%%%%%%%%%%%%%%%%%%%%%%%%%%%%%%
Now, we prove \eqref{v}. We take $\widetilde{v}_n(t):=v_n(t_0-t)$, and $b_n(t)=:a_n(t_0-t)$, we have
$$\left\{\begin{array}{cccc}
 \partial_t\widetilde v_n-\Delta \widetilde{v}_n&=&b_n\widetilde{v}_n ,\\
\widetilde{v}_n(0)=\psi.
\end{array}\right.$$
First, we multiply the precedent equation by $|\widetilde{v}_n|^{r'-2}\widetilde{v}_n$ then we integrate over $\R^2$, we obtain
\begin{eqnarray*}
\frac{1}{r'}\frac{d}{dt}\int_{\R^2}|\widetilde{v}_n(t,x)|^{r'}\;dx+\frac{4(r'-1)}{r'^2}\int_{\R^2}|\nabla|\widetilde{v}_n|^{r'/2}|^2\;dx
&\leq&\int_{\R^2}|b_n||\widetilde{v}_n|^{r'}\;dx\\
&\leq&\int_{\R^2}|b||\widetilde{v}_n|^{r'}\;dx.
\end{eqnarray*}
In the last inequality we used $|b_n|\leq |b|$ because $|a_n|\leq|a|$,   where $b=a(t_0-.)$ on $[0,t_0]$.\\
Using the fact that $|b_j|\leq j$ and  Sobolev embedding, we get
\begin{eqnarray*}
\int_{\R^2}|b||\widetilde{v}_n|^{r'}\;dx&\leq&\int_{\R^2}|b-b_j||\widetilde{v}_n|^{r'}\;dx+\int_{\R^2}|b_j||\widetilde{v}_n|^{r'}\;dx\\
&\leq&\|b-b_j\|_{L^{p}}\|{\widetilde{v}_n}^{r'/2}\|_{L^{2p^{'}}}^2+j\int_{\R^2}|\widetilde{v}_n|^{r'}\;dx,\\
&\leq&C\|b-b_j\|_{L^{p}}\|{\nabla|\widetilde{v}_n}|^{r'/2}\|_{L^{2(1+\frac{1}{\varepsilon})}}^2+(j+C)\|\widetilde{v}_n(t)\|_{L^{r'}}^{r'}
\end{eqnarray*}
Since $b_j\stackrel{j\rightarrow\infty}{\longrightarrow}b \mbox{ in } \mathcal C([0,T]; L^{p}(\R^2))$, we choose $j\geq 0$ large enough such that
$$C\displaystyle{\|b-b_j\|_{L^{p}}\leq \frac{4(r'-1)}{{r'}^2}}\,.$$
Therefore
$$
\frac{1}{r'}\frac{d}{dt}\|\widetilde{v}_n(t)\|_{L^{r'}}^{r'}\leq (j+C)\|\widetilde{v}_n(t)\|_{L^{r'}}^{r'}.
$$
Using Gronwall Lemma, il follows that
$$\|\widetilde{v}_n(t)\|_{L^{r'}}^{r'}\leq \|\psi\|_{L^{r'}}^{r'}{\rm e}^{(j+C)r't}$$
which conclude the proof of \eqref{v}.\\
The proof of the Lemma \ref{un} is achieved.
\end{proof}
Now we prove that $a\in \mathcal C([0,T];L^2)$. We proceed by contradiction.
Assume that there exists $\varepsilon>0$, $t\in [0,T]$ and a sequence of real numbers $(t_n)$ in $[0,T]$ such that
\begin{equation}\label{abs}
t_n\rightarrow t\quad\mbox{and}\quad\|a(t_n)-a(t)\|_{L^{2}}>\varepsilon,\quad \forall n\in\N.
\end{equation}
 Denote $u_n:=u(t_n),v_n:=v(t_n)$ and $w_n:=w(t_n)$. Recall that
$u,v\in \mathcal C([0,T];H^1)$. So up to extraction of a subsequence, we have
$$a(t_n)\rightarrow a(t)\quad\mbox{almost everywhere}.$$
 Moreover, by a convexity argument\begin{eqnarray*}
|a(t_n)|&\leq&{\rm e}^{2u_n^2}-1+{\rm e}^{2v_n^2}-1.
\end{eqnarray*}
Since $u\in \mathcal C([0,T];H^1)$, using Proposition \ref{int1}, we infer
$${\rm e}^{2u_n^2}-1\rightarrow {\rm e}^{2u^2}-1\quad\mbox{and}\quad {\rm e}^{2v_n^2}-1\rightarrow {\rm e}^{2v^2}-1\quad\mbox{in}\quad L^{2}(\R^2).$$
Thus, there exists $\phi\in L^{2}$ such that
$$|a(t_n)|\leq \phi.$$
Using Lebesgue theorem, we deduce
$$  a(t_n)\rightarrow a(t)\quad\mbox{in}\quad L^{2}.$$
This contradicts \eqref{abs}, and we conclude that $a\in \mathcal C([0,T];L^{2}).$\\

\noindent{\bf End of the proof of Theorem \ref{Uniq}}.\\
It is sufficient to check assumptions of Lemma \ref{un}. Obviously $w\in L^{\infty}([0,T],L^q(\R^2))$ for every $2\leq q<\infty$.
\end{proof}

%%%%%%%%%%%%%%%%%%%%%%%%%%%%%%%%%%%%%%%%%%%%%%%%%%%%%%%%%%%%%%%%%%%%%%%%%%%%%%%%%%%%%%%%%%%%%%%%%%%%%%%%%%%%%%%%%%%%%%%%%%%%%%%%%%%%%%%%%%%%%%%%%%%%
%%%%%%%%%%%%%%%%%%%%%%%%%%%%%%%%%%%%%%%%%%%%%%%%%%%%%%%%%%%%%%%%%%%%%%%%%%%%%%%%%%%%%%%%%%%%%%%%%%%%%%%%%%%%%%%%%%%%%%%%%%%%%%%%%%%%%%%%%%%%%%%
\subsection{Global existence}
%%%%%%%%%%%%%%%%%%%%%%%%%%%%%%%%%%%%%%%%%%%%%%%%%%%%%%%%%%%%%%%%%%%%%%%%%%%%%%%%%%%%%%%%%%%%%%%%%%%%%%%%%%%%%%%%%%%%%%%%%%%%%%%%
In this subsection we prove a global well-posedness result in the defocusing case.
\begin{thm}\label{Global}
Let $u_0\in H^1(\R^2)$ and assume that $f(u)=-u({\rm e}^{  u^2}-1)$. Then, there exists a unique global solution to \eqref{eq2} in the class
$$\mathcal{C}(\R,H^{1}(\R^{2})).$$
\end{thm}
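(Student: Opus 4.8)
The plan is to combine the local existence and unconditional uniqueness already established (Theorems \ref{t1} and \ref{Uniq}) with an a priori bound showing the $H^1$ norm of the solution cannot blow up in finite time. By the standard blow-up alternative for the fixed-point construction (as in \cite{Br}), it suffices to prove that on any maximal existence interval $[0,T^*)$ one has $\sup_{t<T^*}\|u(t)\|_{H^1(\R^2)}<\infty$; then $T^*=\infty$, and the same argument run backward in time (the equation with $f(u)=-u({\rm e}^{u^2}-1)$ is invariant under $t\mapsto -t$ only formally, but the defocusing sign makes the backward problem equally well behaved via the energy) yields a solution on all of $\R$. Concretely, I would first record the energy identity: multiplying \eqref{eq2} by $u_t$ and integrating gives $\frac{d}{dt}J(u(t))=-\|u_t(t)\|_{L^2}^2\le 0$, where in the defocusing case $J(u)=\frac12\|\nabla u\|_{L^2}^2+\int_{\R^2}F(u)\,dx$ with $F(u)=\int_0^u v({\rm e}^{v^2}-1)\,dv\ge 0$. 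Hence $\|\nabla u(t)\|_{L^2}^2\le 2J(u_0)$ for all $t$, and also $\int_{\R^2}F(u(t))\,dx\le J(u_0)$.

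Second, I would control the $L^2$ norm. Multiplying the equation by $u$ and integrating, $\frac12\frac{d}{dt}\|u(t)\|_{L^2}^2=-\|\nabla u(t)\|_{L^2}^2-\int_{\R^2}u^2({\rm e}^{u^2}-1)\,dx\le 0$, so $\|u(t)\|_{L^2}\le\|u_0\|_{L^2}$. Together with the gradient bound this gives a uniform bound $\|u(t)\|_{H^1}\le C(\|u_0\|_{H^1})$ on the whole maximal interval, which is exactly what the blow-up criterion needs. This is the heart of the matter and, thanks to the defocusing sign, it is genuinely soft — no Moser--Trudinger input is needed for the bound itself.

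The one point that needs care — and what I expect to be the main obstacle — is justifying the energy and $L^2$ identities rigorously at the regularity level $u\in\mathcal C([0,T];H^1)$, since a priori $u_t$ need not be in $L^2$ and $F(u)$ must be shown integrable and differentiable in $t$. Here I would invoke the parabolic smoothing already proved in Section 4: the solution lies in $L^\infty_{loc}((0,T^*);H^1\cap L^\infty)$ (indeed the construction produces $v+w$ with the smooth part in $H^1\cap L^\infty$, and the perturbation argument plus Brezis--Cazenave-type regularization upgrades the full solution), so on any $[\tau,T^*)$ with $\tau>0$ the solution is smooth enough to differentiate $J(u(t))$ and $\|u(t)\|_{L^2}^2$ legitimately; Lemma \ref{int} ensures ${\rm e}^{u^2}-1\in L^q$ for all $q<\infty$ so all the integrals make sense, and Proposition \ref{int1} gives the continuity in $t$ needed to pass $\tau\to 0$. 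A standard approximation (mollifying $u_0$, or using the $S_N$ decomposition of Section 4) then transfers the inequalities $\|u(t)\|_{L^2}\le\|u_0\|_{L^2}$ and $\|\nabla u(t)\|_{L^2}^2\le 2J(u_0)$ to the limit. With the uniform $H^1$ bound in hand, the blow-up alternative forces $T^*=\infty$; uniqueness is Theorem \ref{Uniq}, and the extension to negative times follows identically since the defocusing energy is equally coercive for the time-reversed problem.
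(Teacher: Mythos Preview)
Your energy computation is correct: in the defocusing case the functional $\tilde J(u)=\tfrac12\|\nabla u\|_{L^2}^2+\int G(u)\,dx$ with $G(u)=\int_0^u v({\rm e}^{v^2}-1)\,dv\ge0$ is nonincreasing, and together with the $L^2$ decay this yields a uniform bound on $\|u(t)\|_{H^1}$. The gap is in the step where you pass from this bound to $T^*=\infty$. The blow-up alternative you invoke would require the local existence time in Theorem~\ref{t1} to depend only on $\|u_0\|_{H^1}$, and it does not: the construction splits $u_0=S_N u_0+(I-S_N)u_0$, and the time is governed by $\|S_N u_0\|_{L^\infty}$ (through the factor ${\rm e}^{2(r+\|v\|_{L^\infty})^2}$ in the contraction estimates), while $N$ is chosen according to the tail $\|(I-S_N)u_0\|_{H^1}$. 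A family bounded in $H^1$ can require arbitrarily large $N$, hence arbitrarily large $\|S_N u_0\|_{L^\infty}$ and arbitrarily small local time. This is precisely the criticality of the exponential nonlinearity with respect to the Moser--Trudinger embedding: a uniform $H^1$ bound controls $\int({\rm e}^{\beta u^2}-1)\,dx$ only for $\beta<4\pi/\|\nabla u\|_{L^2}^2$, which for large data is strictly less than $1$ and therefore does not dominate the nonlinearity.

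The paper closes this gap differently. Instead of an $H^1$ blow-up criterion it uses an $L^\infty$ one (Lemma~\ref{lm1}), which \emph{is} available because once $u(t_0)\in H^1\cap L^\infty$ the time in Proposition~\ref{lm01} depends on $\|u(t_0)\|_{H^1\cap L^\infty}$, and an $L^\infty$ bound forces an $H^1$ bound by Gronwall. The heart of the argument is then Proposition~\ref{lm0}: a De~Giorgi--Caffarelli--Vasseur level-set iteration (using $u f(u)\le0$ to get the local energy inequality for $(u-c_k)_+$) yielding $\|u(t)\|_{L^\infty}\le\sqrt{2}\,\|u_0\|_{L^2}$ for all $0<t<T^*$. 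Your mention of parabolic smoothing and Brezis--Cazenave regularization puts $u(t)$ in $L^\infty$ for each fixed $t>0$, but does not give the uniform-in-$t$ bound that the criterion requires; the De~Giorgi iteration is what supplies it. A minor separate point: the heat equation is not time-reversible, so the extension to negative times you sketch is not available --- the $\R$ in the theorem statement should be read as $[0,\infty)$.
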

In the defocusing case, we prove that for $u_0\in H^1(\R^2)$, the solution $u\in \mathcal C([0,T);H^1(\R^2))$ to \eqref{eq2} satisfies $ u\in L^{\infty}_{loc}(]0,T[; L^{\infty}(\R^2))$ then we conclude using the next standard blow-up criterion (see for example \cite{Br}).
\begin{lem}\label{lm1}
Let $u_0\in H^1(\R^2)$ and $u\in \mathcal C([0,T^*);H^1(\R^2))$ solution to \eqref{eq2}. Assume that $T^*<\infty$, then
$$\limsup_{t\rightarrow T^*} \|u(t)\|_{L^{\infty}(\R^2)}=+\infty.$$
\end{lem}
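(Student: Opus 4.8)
The plan is to argue by contraposition: I would assume that $\limsup_{t\to T^*}\|u(t)\|_{L^\infty(\R^2)} =: M < \infty$ and show that the solution can then be continued past $T^*$, contradicting the maximality of $T^*$. So fix $\tau<T^*$ close enough to $T^*$ that $\|u(t)\|_{L^\infty}\le 2M$ for all $t\in[\tau,T^*)$. The strategy is to produce a uniform-in-$t$ bound on $\|u(t)\|_{H^1}$ on $[\tau,T^*)$ using the energy/Duhamel estimate \eqref{enrg} together with the nonlinear estimates of Lemma \ref{f}. Writing $u(t)={\rm e}^{(t-\tau)\Delta}u(\tau)+\int_\tau^t {\rm e}^{(t-s)\Delta}f(u(s))\,ds$, estimate \eqref{enrg} gives
\[
\sup_{t\in[\tau,T']}\|u(t)\|_{H^1} \le C\Big(\|u(\tau)\|_{H^1} + \|f(u)\|_{L^1([\tau,T'],H^1)}\Big)
\]
for any $T'<T^*$.

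The key step is to control $\|f(u(t))\|_{H^1}$ by $\|u(t)\|_{H^1}$ with a constant depending only on $M$. Since $f(u)=\pm u({\rm e}^{u^2}-1)$, one has $|f(u)|\le |u|({\rm e}^{u^2}-1)\le |u|\,{\rm e}^{4M^2}$ pointwise when $|u|\le 2M$, so $\|f(u)\|_{L^2}\lesssim_M \|u\|_{L^2}$. For the gradient, $\nabla f(u) = f'(u)\nabla u$ with $|f'(u)| = |{\rm e}^{u^2}-1 + 2u^2{\rm e}^{u^2}|\lesssim_M 1$ when $|u|\le 2M$, hence $\|\nabla f(u)\|_{L^2}\lesssim_M \|\nabla u\|_{L^2}$. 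Therefore $\|f(u(t))\|_{H^1}\le C_M\|u(t)\|_{H^1}$, and plugging this into the display above yields, for all $T'<T^*$,
\[
\sup_{t\in[\tau,T']}\|u(t)\|_{H^1}\le C\|u(\tau)\|_{H^1} + C\,C_M\int_\tau^{T'}\sup_{s\in[\tau,t]}\|u(s)\|_{H^1}\,dt.
\]
Gronwall's lemma then gives $\sup_{t\in[\tau,T^*)}\|u(t)\|_{H^1}\le C\|u(\tau)\|_{H^1}\,{\rm e}^{C C_M(T^*-\tau)}<\infty$.

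With this uniform $H^1$ bound in hand, I would finish by a standard continuation argument: the local existence theory (Theorem \ref{t1}, applied with the profile $u(t_k)$ as new initial data, for $t_k\uparrow T^*$) provides a solution on a time interval whose length depends only on $\|u(t_k)\|_{H^1}\le R$; since this length is bounded below uniformly in $k$, choosing $t_k$ close enough to $T^*$ extends the solution beyond $T^*$, contradicting the definition of $T^*$ as the maximal existence time. Hence $T^*<\infty$ forces $\limsup_{t\to T^*}\|u(t)\|_{L^\infty}=+\infty$. The main obstacle is making the step $\|f(u(t))\|_{H^1}\lesssim_M\|u(t)\|_{H^1}$ fully rigorous — in particular that $u(t)\in H^1$ with $\|u(t)\|_{L^\infty}\le 2M$ indeed implies $f(u(t))\in H^1$ with the stated bound, which uses the $\mathcal C^1$ structure of $f$ and the chain rule in $H^1$; the rest is routine Duhamel/Gronwall bookkeeping.
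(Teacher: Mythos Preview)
The paper does not prove Lemma \ref{lm1}; it is simply quoted as a standard blow-up criterion with a reference to \cite{Br}. Your argument is the expected one and is essentially correct, but one step in the continuation argument needs sharpening.

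You invoke Theorem \ref{t1} to restart the equation from data $u(t_k)$ and assert that the resulting existence time depends only on $\|u(t_k)\|_{H^1}$. That is not clear here: the problem is $H^1$-critical, and the proof of Theorem \ref{t1} proceeds by splitting the data into a low-frequency part $S_N u_0$ and a small $H^1$ remainder; the integer $N$ --- and hence the time, through $\|S_N u_0\|_{L^\infty}$ --- depends on the profile of $u_0$, not merely on its $H^1$ norm. The clean fix is to invoke Proposition \ref{lm01} instead. You have already established $\|u(t_k)\|_{H^1}\le R$ \emph{and} $\|u(t_k)\|_{L^\infty}\le 2M$ uniformly in $k$, and inspection of the fixed-point argument in Proposition \ref{lm01} shows that all constants $C_{0,r}$ there depend only on $r$, $\|u_0\|_{H^1}$ and $\|u_0\|_{L^\infty}$; hence it yields a solution in ${\mathcal C}([t_k,t_k+\delta);H^1\cap L^\infty)$ with $\delta=\delta(R,M)>0$ independent of $k$. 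Choosing $t_k>T^*-\delta$ and using uniqueness (Theorem \ref{Uniq}) to glue then extends $u$ past $T^*$, giving the contradiction. The rest of your argument --- the pointwise bounds $|f(u)|\le |u|\,{\rm e}^{4M^2}$, $|f'(u)|\lesssim_M 1$ on $\{|u|\le 2M\}$ and the Duhamel--Gronwall step --- is correct.
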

The next Proposition proves that the solution is bounded. The proof is in the spirit of Caffarelli-Vasseur \cite{cv} and is based on the energy estimate.
\begin{prop}\label{lm0}
Assume that $u_0\in H^1(\R^2)$ and $u\in \mathcal C([0,T);H^1(\R^2))$ solution to \eqref{eq2} with $f(u)=-u({\rm e}^{ u^2}-1)$. Then
$$u\in L^{\infty}_{loc}(]0,T[,L^{\infty}(\R^2))\quad\mbox{and}\quad \|u(t)\|_{L^{\infty}}\leq \sqrt{2}\,\|u_0\|_{L^2},\quad\forall\;0<t<T.$$
\end{prop}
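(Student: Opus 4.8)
The plan is to run a De Giorgi / Caffarelli--Vasseur type iteration on the truncated energy of the solution, exploiting the sign of the nonlinearity. Fix a level $M>0$ and, for $k\geq 0$, set $M_k:=M(1-2^{-k})\uparrow M$, and introduce the truncations $u_k:=(u-M_k)_+$. Pick a sequence of times $0<\tau_{k}\uparrow \tau$ (for any fixed $0<\tau<T$) so that on the interval $[\tau_k,T)$ one controls $u_k$. The key structural point is that since $f(u)=-u(\mathrm e^{u^2}-1)$, multiplying the equation by $u_k$ and integrating gives
\begin{equation*}
\frac12\frac{d}{dt}\int_{\R^2} u_k^2\,dx+\int_{\R^2}|\nabla u_k|^2\,dx
=\int_{\R^2} f(u)\,u_k\,dx\leq 0,
\end{equation*}
because $f(u)u_k\leq 0$ wherever $u>M_k>0$. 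Thus the nonlinear term is a \emph{friend}: it only helps dissipate the energy. This is exactly the mechanism used in \cite{cv}.

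Next I would set up the level-set energy functional, e.g.
\begin{equation*}
U_k:=\sup_{t\in[\tau_k,T)}\int_{\R^2} u_k^2(t)\,dx+\int_{\tau_k}^{T}\int_{\R^2}|\nabla u_k|^2\,dx\,dt,
\end{equation*}
and derive the recursive inequality $U_{k}\le C\,b^{k}\,U_{k-1}^{\beta}$ for some $b>1$ and some $\beta>1$. The ingredients are: (i) integrate the differential inequality above against a cutoff in time supported in $[\tau_{k-1},T)$ and equal to $1$ on $[\tau_k,T)$, producing the $\tau_k-\tau_{k-1}\approx 2^{-k}$ factors (hence the geometric constant $b^k$); (ii) a Sobolev/interpolation estimate in 2D — here $\|u_k\|_{L^4_{t,x}}^4\lesssim \big(\sup_t\|u_k\|_{L^2}^2\big)\,\|\nabla u_k\|_{L^2_{t,x}}^2$, which is the parabolic Gagliardo--Nirenberg inequality in dimension $2$; (iii) the measure-of-super-level-set bound $\{u>M_k\}\supseteq\{u>M_{k-1}\}$ combined with Chebyshev: $u_{k-1}\geq M_k-M_{k-1}=M2^{-k}$ on the support of $u_k$, so $\mathbf 1_{\{u_k>0\}}\leq (2^{k}/M)^{\theta} u_{k-1}^{\theta}$ for suitable $\theta>0$. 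Feeding (iii) into (ii) via Hölder upgrades the $L^4$ norm of $u_k$ to a power $\beta>1$ of $U_{k-1}$ at the cost of another $b^k$. This produces precisely the hypothesis of Lemma \ref{sq} (with the constant $C$ there being our $b$ and the exponent our $\beta$).

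Then I would invoke Lemma \ref{sq}: provided the initial term $U_0$ is small enough, namely $U_0\leq b^{-1/(\beta-1)^2}$, one concludes $U_k\to 0$, which forces $u_k\to 0$, i.e. $u\leq M$ on $[\tau,T)\times\R^2$ (and by symmetry $u\geq -M$, treating $-u$). The smallness of $U_0$ is guaranteed by the global energy bound: since $f$ is defocusing, $\frac12\|u(t)\|_{L^2}^2$ is nonincreasing, so $U_0\lesssim \|u_0\|_{L^2}^2$ uniformly; one then tracks the constants and checks that choosing $M$ proportional to $\|u_0\|_{L^2}$ — in fact $M=\sqrt2\,\|u_0\|_{L^2}$ after optimizing — makes $U_0$ land below the threshold $C_0^*$ of Lemma \ref{sq}. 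A short bootstrap (first prove $u\in L^\infty_{loc}((0,T);L^\infty)$ qualitatively, using that a priori $u_k\in L^2_tH^1_x$, then run the quantitative iteration from an arbitrarily small starting time) delivers the stated bound for all $0<t<T$.

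The main obstacle I anticipate is bookkeeping the constants so that the final level $M$ comes out to be exactly $\sqrt2\,\|u_0\|_{L^2}$ rather than just $C\|u_0\|_{L^2}$: this requires being careful that the \emph{only} loss of mass is dissipative (the term $\int f(u)u_k\leq 0$ must be discarded with zero, not estimated crudely) and that the parabolic Gagliardo--Nirenberg and Chebyshev steps are combined with the sharp interpolation exponents; one also must make sure the nonlinear term $\int f(u)u_k$, which we are throwing away, does not need to be retained — its nonpositivity is what makes the whole argument dimension-$2$ and nonlinearity-independent, so the exponential growth of $f$ never enters. A secondary technical point is justifying the multiplication of the equation by $u_k$ (and the integration by parts) at the level of regularity $u\in \mathcal C([0,T);H^1)$; this is handled by the maximal-regularity smoothing already recorded in the excerpt, which places $u$ in $L^{\widetilde p}_{loc}((0,T);W^{1,r})$ on positive time intervals, enough to make all manipulations rigorous.
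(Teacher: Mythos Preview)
Your proposal follows essentially the same De Giorgi / Caffarelli--Vasseur scheme as the paper: the truncations $u_k=(u-M_k)_+$, the energy inequality coming from $f(u)u_k\le 0$, the level-set functional $U_k$, the Chebyshev step $\mathbf 1_{\{u_k>0\}}\le (2^k u_{k-1}/M)^\theta$, the nonlinear recursion, and Lemma~\ref{sq}. So the architecture is correct and matches the paper.

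The one point where your plan diverges from the paper is precisely the one you flag as the ``main obstacle'', and your proposed fix is not the right one. Fixing the interpolation at $L^4_{t,x}$ corresponds in the paper's notation to the single choice $\alpha=4$ (recursion exponent $\beta=\alpha/2=2$); tracking the constants then yields a bound of the form $\|u(t)\|_{L^\infty}\le 2^{11/4}\,t^{-1/4}\,\|u_0\|_{L^2}$, not $\sqrt2\,\|u_0\|_{L^2}$. No amount of care in ``not estimating crudely'' will remove the $t^{-1/4}$ or bring the constant down to $\sqrt2$. What the paper does instead is keep the integrability exponent $\alpha>2$ free: it uses $\|u_{k-1}\|_{L^{2+\alpha}}$ together with Gagliardo--Nirenberg, obtains the family of bounds
\[
\|u(t)\|_{L^\infty}\le 2^{\frac{\alpha^2+10\alpha-12}{2\alpha(\alpha-2)}}\,t^{-1/\alpha}\,\|u_0\|_{L^2},
\]
and then sends $\alpha\to\infty$, which kills the $t^{-1/\alpha}$ factor and drives the numerical constant to $2^{1/2}=\sqrt2$. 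So the device that produces the exact constant $\sqrt2$ (and the time-independence of the bound) is the optimization over the free interpolation exponent, not sharper bookkeeping at a fixed exponent.
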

\begin{rem}
Actually we will obtain a more precise estimate, namely
$$
\|u(t)\|_{L^{\infty}(\R^2)}\leq 2^{\frac{\alpha^2+10\alpha-12}{2\alpha (\alpha-2)}}\,t^{-\frac{1}{\alpha}}\,\|u_0\|_{L^2},\quad\forall\;\alpha>2.
$$
\end{rem}
\begin{proof}[Proof of Proposition \ref{lm0}].
%%%%%%%%%%%%%%%%%%%%%%%%%%%%%%%%%%%%%%%%%%%%%%%%%%%%%%%%%%%%%%%%%%%%%%%%%%%%%%%%%%%%%%%%%%%%%%%%%%%%%%%%%%%%%%%%%%%%%%%%%%%%%%%%%%%%%%%%%%
Let $M>0$ to fix later and
$$c_k:=M(1-2^{-k}),\quad u_k:=(u-c_k)_+=(u-c_k)\chi_{\{u>c_k\}}.$$
Since $uf(u)\leq 0$, we have the following estimate
\begin{equation}\label{e2}
\frac{d}{dt}\int_{\R^2}u_k^2(t)dx+\int_{\R^2}|\nabla u_k(t)|^2dx\leq 0.
\end{equation}
Let $t_0>0$, $T_k:=t_0(1-2^{-k})$, and
$$U_k:=\sup_{t\geq T_k}\Big(\int_{\R^2}u_k^2(t,x)dx\Big)+2\int_{T_k}^{+\infty}\int_{\R^2}|\nabla u_k(t,x)|^2dxdt.$$
Let $T_{k-1}\leq s\leq T_k\leq t\leq t_0$. Integring \eqref{e2} between $s,t$ and $s,\infty$ we obtain
$$U_k\leq 2\|u_k(s)\|_{L^2(\R^2)}^2.$$
Thus integring between $T_{k-1}$ and $T_k$, we have
\begin{equation}\label{e0}
U_k\leq\frac{2^{k+1}}{t_0}\|u_k\|_{L^2((T_{k-1},T_k)\times\R^2)}^2.
\end{equation}
Moreover, using the interpolation estimate
\begin{equation}
\|u\|_{L^r}\leq\|u\|_{L^2}^{\frac{2}{r}}\|\nabla u\|_{L^r}^{1-\frac{2}{r}},\quad 2\leq r<\infty,
\end{equation}
we obtain
$$\|u_k(t)\|_{L^r}\leq\|u_k(t)\|_{L^2}^{\frac{2}{r}}\|\nabla u_k(t)\|_{L^r}^{1-\frac{2}{r}}\leq\|u_k(0)\|_{L^2}^{\frac{2}{r}}\|\nabla u_k(t)\|_{L^r}^{1-\frac{2}{r}}\,.$$
Therefore
\begin{eqnarray*}
\|\nabla u_k\|_{L^2((T_k,\infty)\times\R^2)}^2
&\geq&\Big( \frac{\|u_k\|_{L^r((T_k,\infty)\times\R^2)}}{\|u_k(0)\|_{L^2}^{\frac{2}{r}}}\Big)^{\frac{2r}{r-2}}\\
&\geq&\Big( \frac{1}{2\|u_0\|_{L^2}}\Big)^{\frac{4}{r-2}}\|u_k\|_{L^r((T_k,\infty)\times\R^2)}^{\frac{2r}{r-2}}\\
&\geq&C_0\|u_k\|_{L^r((T_k,\infty)\times\R^2)}^{\frac{2r}{r-2}}.
\end{eqnarray*}
Thus
\begin{equation}\label{e3}
U_k\geq 2C_0\|u_k\|_{L^r((T_k,\infty)\times\R^2)}^{\frac{2r}{r-2}}.
\end{equation}
On the other hand, if $u_k>0$ then $u_{k-1}>2^{-k}M$, so
$$\chi_{\{u_k>0\}}\leq\Big(\frac{2^k}{M}u_{k-1}\Big)^{\alpha},\quad\forall \alpha>0.$$
Now, by \eqref{e0}, and for given $\alpha>0$,
\begin{eqnarray*}
U_k
&\leq&\frac{2^{k+1}}{t_0}\int_{(T_{k-1},\infty)\times\R^2}u_{k-1}^2\chi_{\{u_k>0\}}dxdt\\
&\leq&\frac{2^{1+k(\alpha+1)}}{M^{\alpha}t_0}\int_{(T_{k-1},\infty)\times\R^2}u_{k-1}^{2+\alpha}dxdt\\
&\leq&\frac{2^{1+k(\alpha+1)}}{M^{\alpha}t_0}\|u_{k-1}\|_{L^{2+\alpha}({(T_{k-1},\infty)\times\R^2})}^{2+\alpha}.
\end{eqnarray*}
Using \eqref{e3}, we infer
\begin{eqnarray*}
U_k&\leq&A\, C^{k-1}\,U_{k-1}^{\frac{\alpha}{2}},
\end{eqnarray*}
where
$$
A:=\frac{2^{\alpha/2+4}\,\|u_0\|_{L^2}^2}{M^\alpha\, t_0},\quad\mbox{and}\quad C:=2^{\alpha+1}\,.
$$
To conclude the proof, we shall use Lemma \ref{sq}. Indeed, taking $\alpha>2$, $M:=2^{\frac{\alpha^2+10\alpha-12}{2\alpha (\alpha-2)}}\,t_0^{-\frac{1}{\alpha}}\,\|u_0\|_{L^2}$, and applying Lemma \ref{sq} to the sequence $x_k:=A^{\frac{2}{\alpha-2}}\,U_k$, we obtain
$$\lim_{k\rightarrow +\infty}U_k=0.$$
Thus $u\leq M$ for $t\geq t_0$.The same proof on $-u$ gives the same bound for $|u|$. We obtain finally
$$\|u(t_0)\|_{L^{\infty}(\R^2)}\leq 2^{\frac{\alpha^2+10\alpha-12}{2\alpha (\alpha-2)}}\,t_0^{-\frac{1}{\alpha}}\,\|u_0\|_{L^2},\quad\forall\;\alpha>2.$$
Letting $\alpha$ to infinity, we conclude the proof of Proposition \ref{lm0}.
\end{proof}
%%%%%%%%%%%%%%%%%%%%%%%%%%%%%%%%%%%%%%%%%%%%%%%%%%%%%%%%%%%%%%%%%%%%%%%%%%%%%%%%%%%%%%%%%%%%%%%%%%%%%%%%%%%%%

\subsection{Blowing-up solutions}
%%%%%%%%%%%%%%%%%%%%%%%%%%%%%%%%%%%%%%%%%%%%%%%%%%%%%%%%%%%%%%%%%%%%%%%%%%%%%%%%%%%%%%%%%%%%%%%%%%%%%%%%%%%%%
Recall the energy
$$
J(t):=J(u(t))=\frac{1}{2}\|\nabla u(t)\|_{L^2}^2-\int_{\R^2}F(u(t))\;dx,
$$
with $F(u)=\frac{1}{2}\left({\rm e}^{  u^2}-1-u^2\right)$. We show that all solutions with non-positive energy have a finite lifespan time. More precisely

\begin{prop}\label{Blowup}
Let $u_0\in H^1(\R^2)\backslash\{0\}$ such that $J(u_0)\leq 0$ and $u\in\mathcal C([0,T^*[; H^1(\R^2))$ be the maximal solution to \eqref{eq2} with data $u_0$. Then $T^*<\infty$.
\end{prop}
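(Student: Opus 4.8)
The plan is to use the classical concavity argument of Levine and Kaplan, which is essentially what ``proceeding as in \cite{Tan}'' amounts to. First recall that for $f(u)=u({\rm e}^{u^2}-1)$ one has $F(u)=\tfrac12({\rm e}^{u^2}-1-u^2)\geq0$ and $uf(u)=u^2({\rm e}^{u^2}-1)\geq0$, both finite on $H^1(\R^2)$ by Lemma~\ref{int}. Since the solution is smooth for $t>0$ by parabolic regularity, one has the energy identity $\frac{d}{dt}J(u(t))=-\|\partial_t u(t)\|_{L^2}^2$, hence $J(u(t))=J(u_0)-\int_0^t\|\partial_s u\|_{L^2}^2\,ds\leq J(u_0)\leq0$; and $M(t):=\|u(t)\|_{L^2}^2$ satisfies $M'(t)=2\int_{\R^2}u\,\partial_t u\,dx=-2\|\nabla u(t)\|_{L^2}^2+2\int_{\R^2}uf(u)\,dx$. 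All these identities are first established on $[t_0,T^*)$ for an arbitrary small $t_0>0$ and then one lets $t_0\to0$.

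The first key ingredient is a one-variable inequality: there is $\alpha\in(0,\tfrac12)$, say $\alpha=\tfrac14$, such that $sf(s)\geq(2+4\alpha)F(s)$ for all $s\in\R$. Setting $t=s^2\geq0$ this reads $h(t):=2t({\rm e}^t-1)-(2+4\alpha)({\rm e}^t-1-t)\geq0$, and since $h(0)=h'(0)=0$ and $h''(t)=(2-4\alpha+2t){\rm e}^t>0$, it follows; this is precisely the place where the quadratic part of the nonlinearity must be discarded, in agreement with the Remark, since for $f(u)=u{\rm e}^{u^2}$ the analogous inequality $s^2{\rm e}^{s^2}\geq\frac{2+4\alpha}{2}({\rm e}^{s^2}-1)$ fails near $s=0$. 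Inserting this into $M'$ and using $\int F(u)=\tfrac12\|\nabla u\|_{L^2}^2-J(u)$ gives
$$M'(t)\ \geq\ 4\alpha\|\nabla u(t)\|_{L^2}^2-(4+8\alpha)J(u(t))\ \geq\ (4+8\alpha)\Big(|J(u_0)|+\int_0^t\|\partial_s u\|_{L^2}^2\,ds\Big),$$
so $M$ is strictly increasing once $J(u_0)<0$. This also lets me reduce to $J(u_0)<0$: if $J(u_0)=0$, the display at $t=0$ gives $M'(0)\geq4\alpha\|\nabla u_0\|_{L^2}^2>0$ (as $u_0\neq0$ forces $\nabla u_0\neq0$), so $M$ increases strictly on some $[0,t_0]$; were $J(u(t_0))=0$ too, the energy identity would give $\partial_s u\equiv0$ on $[0,t_0]$, i.e. $u(t_0)=u_0$, contradicting $M(t_0)>M(0)$. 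Thus $J(u(t_0))<0$ for some $t_0>0$ and one runs the argument from $t_0$.

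Now assume $J(u_0)<0$. Fix $\beta\in\big(0,\tfrac{2\alpha|J(u_0)|}{1+2\alpha}\big]$, a parameter $\tau>0$, and for $T>0$ set $\Theta(t):=\int_0^tM(s)\,ds+(T-t)M(0)+\beta(t+\tau)^2$ on $0\leq t\leq\min(T,T^*)$, so that $\Theta>0$, $\Theta'(t)=2\big(\int_0^t\int_{\R^2}u\,\partial_s u\,dx\,ds+\beta(t+\tau)\big)$, and $\Theta''(t)=M'(t)+2\beta$. The Cauchy--Schwarz inequality in $L^2((0,t)\times\R^2)\oplus\R$ yields $\tfrac14(\Theta')^2\leq\Theta\cdot\big(\int_0^t\|\partial_s u\|_{L^2}^2\,ds+\beta\big)$; combining this with $\int_0^t\|\partial_s u\|_{L^2}^2\,ds\leq\frac{M'(t)}{4+8\alpha}=\frac{\Theta''-2\beta}{4+8\alpha}$ and $\Theta''\geq(4+8\alpha)|J(u_0)|+2\beta$, a short computation using the choice of $\beta$ gives $\Theta\Theta''-(1+\alpha)(\Theta')^2\geq0$. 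Hence $(\Theta^{-\alpha})''\leq0$, i.e. $\Theta^{-\alpha}$ is positive and concave, with $(\Theta^{-\alpha})'(0)=-2\alpha\beta\tau\,\Theta(0)^{-\alpha-1}<0$; a positive concave function with negative slope at $0$ must vanish at $t_*=\Theta(0)/(2\alpha\beta\tau)$, forcing $T^*\leq t_*$. Choosing $\tau>M(0)/(2\alpha\beta)$ and then $T$ large enough that $t_*\leq T$ makes this bound finite, so $T^*\leq T<\infty$.

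The routine part is the parabolic smoothing needed to legitimize the differentiation formulas for $J(u(t))$ and $M(t)$ on $(0,T^*)$. The genuine subtleties are the sharp pointwise inequality $sf(s)\geq(2+4\alpha)F(s)$ — which is exactly what fails if the quadratic term is kept, matching the Remark — and the bookkeeping of the constants: one must check that the admissible range of $\beta$ dictated by $|J(u_0)|$ is compatible with the exponent $\alpha$ coming from the nonlinearity so that the concavity inequality $\Theta\Theta''\geq(1+\alpha)(\Theta')^2$ genuinely closes.
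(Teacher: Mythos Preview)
Your argument is correct and follows the same Levine-type concavity scheme as the paper: both proofs hinge on the pointwise inequality $uf(u)\geq(2+\varepsilon)F(u)$ (your $\varepsilon=4\alpha$), the energy identity $J'=-\|\partial_t u\|_{L^2}^2$, and Cauchy--Schwarz to obtain a differential inequality of the form $YY''\geq(1+\alpha)(Y')^2$ for an integrated $L^2$-functional.

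The packaging differs slightly. The paper works with the bare quantity $y(t)=\tfrac12\int_0^t\|u(s)\|_{L^2}^2\,ds$ and establishes the concavity inequality only for large $t$, via three intermediate claims (in particular showing $y'(t)\to\infty$ so that $(y'(t)-y'(0))^2\geq\alpha\,y'(t)^2$ eventually); the case $J(u_0)=0$ is absorbed into Claim~1, where one argues that $\partial_t u\equiv0$ would force $u_0$ to be a stationary solution with $\int F(u_0)=0$. You instead use the full Levine functional $\Theta(t)=\int_0^tM+(T-t)M(0)+\beta(t+\tau)^2$, which makes the concavity inequality hold from $t=0$ and yields an explicit upper bound on $T^*$ once $J(u_0)<0$; your reduction of the borderline case $J(u_0)=0$ to strict negativity at a later time is a clean alternative. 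Both routes are standard, and your careful check of the constant $\beta\leq\frac{2\alpha|J(u_0)|}{1+2\alpha}$ is exactly what is needed to close the inequality.
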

The proof is standard and follows for example \cite{L} (see also \cite{IMN} in the context of the Klein-Gordon equation). It consists in following the evolution in time of the function
$$
y(t):=\frac{1}{2}\,\int_0^t\|u(s)\|_{L^2}^2\;ds.
$$
\begin{proof}[Proof of Proposition \ref{Blowup}]

First, observe that since we have removed the quadratic term from the nonlinearity, then $f(u)$ enjoys the following  property for a certain positive number $\varepsilon$
\begin{eqnarray}
\label{superquadratic}
\Big(u f(u)-2F(u)\Big)\geq \varepsilon F(u)\,.
\end{eqnarray}
Next, multiplying \eqref{eq2} by $u$, integrating in space we obtain
$$
J'(t)=-\|\partial_tu(t)\|_{L^2}^2,
$$
and by an integration in time
\begin{eqnarray}
\label{evolution}
J(t)=J(0)-\int_0^t\int_{\R^2}(\partial_tu)^2(s,x)\;dx\,ds.
\end{eqnarray}
Finally, a straight calculation shows that
\begin{eqnarray}
\label{secondderivative}
y''(t)&=&-\|\nabla u\|_{L^2}^2+\int_{\R^2} u f(u)\;dx\nonumber\\
&\geq& \frac{2+\varepsilon}{2}\Big(\int_{\R^2}2 F(u)\;dx-\|\nabla u\|_{L^2}^2\Big)\nonumber\\
&\geq& (2+\varepsilon)\Big(\int_0^t\int_{\R^2}\partial_t u^2\;dx\,ds-J(0)\Big),
\end{eqnarray}
where we used property \eqref{superquadratic} in the second estimate and identity \eqref{evolution} in the last one.
Now, the proof goes by contradiction assuming that $T^*=\infty$. We have\\
{\sf Claim 1:} There exists $t_1>0$ such that $\int_0^{t_1}\|\partial_t u(s)\|_{L^2}^2\;ds>0$.\\
Indeed, otherwise $u(t)=u_0$ almost everywhere and thus $u$ solves the elliptic stationary equation $\Delta u=-f(u)$. Then $\|\nabla u\|_{L^2(\R^2)}^2= \int_{\R^2}uf(u)dx$, and therefore
$$
0\leq\varepsilon \int_{\R^2} F(u_0)\;dx\leq \int_{\R^2}\Big(u_0f(u_0)-2F(u_0)\Big)\;dx=2J(0)\leq 0
$$
giving  $u_0=0$ which is an absurdity.\\
{\sf Claim 2:} For any $0<\alpha<1$, there exists $t_\alpha>0$ such that
$$
(y'(t)-y'(0))^2\geq \alpha y'(t)^2, \quad t\geq t_\alpha.
$$
The claim immediately follows from the first one observing that
$$
\lim_{t\rightarrow\infty}y(t)=\lim_{t\rightarrow\infty}y'(t)=+\infty.
$$
{\sf Claim 3}: One can choose $\alpha=\alpha(\varepsilon)$ such that
\begin{eqnarray}
\label{convexity}
y(t)y''(t)\geq(1+\alpha)y'(t)^2,\quad t\geq t_\alpha.
\end{eqnarray}
Indeed, we have
\begin{eqnarray}
\nonumber
y(t)y''(t)&\geq&\frac{2+\varepsilon}2\Big(\int_0^t\int_{\R^2}u^2\;dxds\Big)\,\Big(\int_0^t\int_{\R^2}\partial_tu^2\;dxds\Big)\\
\nonumber
&\geq&\frac{2+\varepsilon}2\Big(\int_0^t\int_{\R^2}u\partial_tu \;dxds\Big)^2\\
\nonumber
&\geq&\frac{2+\varepsilon}2(y'(t)-y'(0))^2\\
\nonumber
&\geq&\frac{(2+\varepsilon)\alpha}2(y'(t))^2,
\end{eqnarray}
where we used \eqref{secondderivative} in the first estimate, Cauchy-Schwarz inequality in the second and {\sf Claim 2} in the last one.
Now choose $\alpha$ such that $\frac{(2+\varepsilon)\alpha}{2}>1$  then
$$
y(t)y''(t)\geq\frac{(2+\varepsilon)\alpha}{2} y(t)^2.
$$
The fact that this ordinary differential inequality blows up in finite time contradicts our assumption that the solution was global.
\end{proof}
%%%%%%%%%%%%%%%%%%%%%%%%%%%%%%%%%%%%%%%%%%%%%%%%%%%%%%%%%%%%%%%%%%%%%%%%%%%%%%%%%%%%%%%%%%%%%%%%%%%%%%%%%%%%%%%%%%%%%%%%%%%%%%%%%%%%%%%%%%%%%%%%%%%%%%%%%%%%5

%%%%%%%%%%%%%%%%%%%%%%%%%%%%%%%%%%%%%%%%%%%%%%%%%%%%%%%%%%%%%%%%%%%%%%%%%%%%%%%%%%%%%%%%%%%%%%%%%%%%%%%%%%%%%%%%%%%%%%%%%%%%%%%%%%%%%%%%%%%%%%%%%%%%%%%%%%%%%%%%%%%%%%%%%%%%%%%%%%%%%
\section{The Cauchy problem in Orlicz space: proof of Theorem \ref{MainOrlicz}}
%%%%%%%%%%%%%%%%%%%%%%%%%%%%%%%%%%%%%%%%%%%%%%%%%%%%%%%%%%%%%%%%%%%%%%%%%%%%%%%%%%%%%%%%%%%%%%%%%%%%%%%%%%%%%%%%%%%%%%%%%%%%%%%%%%%%%%%%%%%%%%%%%%%%%%%%%%%%%%%%%%%%%%%%%%5
We start this section with the following definition of singular solution.
\begin{defi}
Recall that $B_1$ is the unit ball of $\R^2$. By a singular solution of
$$\left\{\begin{array}{rl}
-\Delta u=f(u) \quad\mbox{in}\quad B_1\\
u=0 \quad \mbox{on}\quad \partial B_1\\
u>0
\end{array}\right.$$
we mean a function $u\in {\mathcal C}^2(B_1\backslash\{0\})$ satisfying
$$
\limsup_{x\to 0} u(x)=\infty\,.
$$
\end{defi}
First we construct a stationary singular solution $Q$ to \eqref{eq2}. Second, we prove the existence of local solution to the Cauchy problem \eqref{eq2} in the Orlicz space $\mathcal L$ with data $Q$. Third we prove a regularizing effect of the heat equation. The nonuniqueness result given by Theorem \ref{MainOrlicz} immediately follows.
%%%%%%%%%%%%%%%%%%%%%%%%%%%%%%%%%%%%%%%%%%%%%%%%%%%%%%%%%%%%%%%%%%%%%%%%%%%%%%%%%%%%%%%%%%%%%%%%%%%%%%%%%%%%%%%%%%%%%%%%%%%%%%%%%%%%%%%%%%%%%%%%%%%%%%%%%%%%5
\subsection{Singular solutions}
%%%%%%%%%%%%%%%%%%%%%%%%%%%%%%%%%%%%%%%%%%%%%%%%%%%%%%%%%%%%%%%%%%%%%%%%%%%%%%%%%%%%%%%%%%%%%%%%%%%%%%%%%%%%%%%%%%%%%%%%%%%%%%%%%%%%%%%%%%%%%%%%%%%%%%%%%%%%%%%%%%%%%%%%%%%%%%%%%%%%%
The main goal of this subsection is to prove the next result.
\begin{thm}
\label{Singular}
The following singular elliptic problem
$$\left\{\begin{array}{rl}
-\Delta u=f(u):=u\,\left({\rm e}^{u^2}-1\right) \quad\mbox{in}\quad B_1\backslash\{0\}\\
u(|x|=1)=0,\quad u(0)=+\infty\\
u>0
\end{array}\right.$$
has infinitely many radial classical solutions. Moreover, they all satisfy
\begin{enumerate}
\item[1)] $-\Delta u=f(u)\quad\mbox{in}\quad \mathcal D^{'}(B_1).$
\item[2)] $u\in \mathcal L(B_1)$ and $\displaystyle\lim_{r\rightarrow 0}\|u\|_{\mathcal L(|x|<r)}=0.$
\end{enumerate}
\end{thm}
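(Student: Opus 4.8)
The plan is to look for radial solutions $u(r)$ of $-u'' - \frac1r u' = f(u) = u(e^{u^2}-1)$ on $(0,1)$ with $u(1)=0$ and $u$ singular at the origin. The natural change of variables for the radial Laplacian on a punctured $2$D disk is the Emden--Fowler substitution $r = e^{-t}$, $t \in (0,\infty)$, and $u(r) = v(t)$, which turns the equation into an autonomous-in-the-principal-part ODE $\ddot v = e^{-2t} f(v)$ on $(0,\infty)$, with $v(0) = 0$ (corresponding to $r=1$) and $t \to \infty$ corresponding to $r \to 0$. The singularity requirement $\limsup_{x\to0} u(x) = +\infty$ becomes $\limsup_{t\to\infty} v(t) = +\infty$. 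Because of the factor $e^{-2t}$, which decays very fast, the nonlinear term is integrable near $t = \infty$ provided $v$ does not grow too fast, so I expect solutions that behave like a linear function $v(t) \sim at$ as $t \to \infty$, i.e. $u(r) \sim a\log\frac1r$, a logarithmic singularity.

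The construction I would carry out: fix a target slope $a > 0$ and solve the ODE $\ddot v = e^{-2t} v(e^{v^2}-1)$ by a shooting/fixed-point argument. Concretely, write the would-be solution as $v(t) = at + \int_0^t (t-s) e^{-2s} f(v(s))\,ds$ minus a correction to enforce $v(0)=0$; more cleanly, set up the integral equation on $[0,\infty)$ whose solution satisfies $v(0)=0$ and $\dot v(t) \to a$ as $t\to\infty$, namely
\[
v(t) = at - \int_0^\infty \min(t,s)\, e^{-2s} f(v(s))\,ds,
\]
and show that for $a$ large this is a contraction on a suitable closed subset of $C([0,\infty))$ with the weighted norm $\sup_t \frac{|v(t)|}{1+t}$. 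The key estimate is that $\int_0^\infty e^{-2s} f(v(s))\,ds < \infty$ when $|v(s)| \lesssim 1+s$ since $e^{-2s} e^{C s^2}$... wait — that diverges; so in fact one must be more careful and only allow $|v(s)| \le \epsilon\sqrt{s}$-type growth, or run the shooting argument on a finite interval $[0,T]$ and let $T\to\infty$ with uniform bounds. I would actually do the shooting: for the ODE with $v(0)=0$, $\dot v(0) = \beta$, show that for a range of $\beta$ the solution exists globally on $(0,\infty)$, stays positive, and has $\dot v(t)$ converging to a positive limit; distinct values of $\beta$ give distinct solutions, yielding infinitely many.

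For the two additional properties: (1) that $-\Delta Q = f(Q)$ holds in $\mathcal D'(B_1)$ (not just away from $0$) follows because a logarithmic (or milder) singularity at a point in $\R^2$ is a removable-type singularity for the distributional Laplacian — one checks that $\nabla Q \in L^1_{loc}$, $f(Q) \in L^1_{loc}$, and that the point mass that could appear at $0$ has zero coefficient, e.g. by testing against a cutoff $\psi_\epsilon$ supported near $0$ and using $\int_{|x|<\epsilon} |\nabla Q| \to 0$ together with $r u'(r) \to 0$ (for the $a\log(1/r)$ profile, $ru'(r) \to -a \neq 0$, so actually one needs the profile refined: the right statement is that $r u'(r)$ has a finite limit and the would-be delta coefficient is that limit, which the equation forces to be $0$ since $f(Q) \in L^1$). (2) That $Q \in \mathcal L(B_1)$ with $\|Q\|_{\mathcal L(|x|<r)} \to 0$: this is exactly the content of the integrability computation in Lemma \ref{int}-style, since near $0$ we have $Q(r) \le C\log\frac1r$ so $e^{Q^2} \sim r^{-C'\log(1/r)}$... which is not locally integrable for any fixed slope — hence the profile must be milder, $Q(r) = o(\sqrt{\log(1/r)})$, or more precisely the shooting must be tuned so the potential energy is finite. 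This tension is the crux.

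The main obstacle, then, is reconciling ``$Q$ singular'' with ``$f(Q) \in L^1$ and $Q \in \mathcal L$'': the singularity must be strong enough that $Q \notin L^\infty$ yet weak enough that $e^{Q^2}-1$ is integrable and $Q$ lies in the Orlicz space with vanishing local norm. The hard part of the proof is identifying the correct asymptotic profile at the origin — I expect something like $Q(r) \sim \sqrt{2\log\log\frac1r}$ or $Q(r) \sim c\sqrt{\log(1/r)}$ with the borderline constant, controlled precisely by the $e^{-2t}$ weight in the Emden--Fowler form — and then running the ODE existence/uniqueness-of-branch argument so that this profile is attained, while simultaneously keeping $\int_{B_1} (e^{Q^2}-1)\,dx < \infty$. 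Once the profile and the global ODE solution are in hand, properties (1) and (2) are short verifications, and the multiplicity ``infinitely many'' comes from the one-parameter freedom in the shooting data.
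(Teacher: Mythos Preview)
Your setup matches the paper: the Emden--Fowler substitution $r=e^{-t}$, $u(r)=v(t)$, reducing to $\ddot v=-e^{-2t}f(v)$ with $v(0)=0$, $\dot v(0)=\alpha$, and a shooting argument in $\alpha$. But you misidentify where the real work lies, and this leaves two genuine gaps.

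\textbf{First gap: the integrability of $f(Q)$.} You say ``the hard part of the proof is identifying the correct asymptotic profile at the origin'' in order to reconcile $Q\notin L^\infty$ with $f(Q)\in L^1$, and you never settle on a profile. The paper bypasses this entirely. Its Lemma~\ref{lm05}(i) shows that \emph{every} nonnegative $C^2(B_1\setminus\{0\})$ solution automatically has $f(u)\in L^1(B_1)$, with no asymptotic information whatsoever: one tests the equation against a radial cutoff $\varphi_\varepsilon$ chosen so that $\Delta\varphi_\varepsilon\ge 0$ on $B_1$; then $\int f(u)\varphi_\varepsilon=-\int(\Delta u)\varphi_\varepsilon$, and integrating by parts (using $u\ge 0$, $\Delta\varphi_\varepsilon\ge 0$) gives a bound depending only on boundary data, uniform in $\varepsilon$; Fatou finishes. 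Once $f(u)\in L^1$, the Orlicz membership is one line: near the singularity $u>1$, so $e^{u^2}-1\le u(e^{u^2}-1)=f(u)$. The distributional extension (property 1) then follows from a routine cutoff argument using only $(-\log|x|)^\alpha u^q\in L^1$, which is implied by $f(u)\in L^1$ via H\"older. Your attempt to read off the delta coefficient from $r u'(r)$ is a dead end precisely because you do not know the profile.

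\textbf{Second gap: multiplicity.} Saying ``distinct $\beta$ give distinct solutions'' is not enough; you must show that an infinite set of $\alpha$ yields solutions that stay positive for all $t>0$ and tend to $+\infty$. The paper's argument is topological. Let $I=\{\alpha>0:\ v_\alpha\ \text{vanishes at some finite time}\ T(\alpha)\}$ and $J=(0,\infty)\setminus I$. By the uniqueness of the regular radial ground state (Lemma~\ref{lm04}) exactly one $\alpha_0\in J$ gives a bounded solution; any other $\alpha\in J$ gives a concave positive solution with $v_\alpha(t)\to+\infty$. If $J\setminus\{\alpha_0\}$ were empty (or finite), $I$ would split into open intervals $I_k$; the first-zero map $T:I\to(0,\infty)$ is continuous and injective (ODE uniqueness), so each $T(I_k)$ is an interval, and they are pairwise disjoint. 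But each $T(I_k)$ contains $(A,\infty)$ for large $A$ (since $T(\alpha)\to\infty$ as $\alpha$ approaches a point of $J$), a contradiction. Hence $J$ is a nondegenerate interval and there are infinitely many singular solutions.
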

For the proof of Theorem \ref{Singular},  we will need the following results about the associated elliptic problem. The first is known and can be found in \cite{T} for example. The second can be seen as an extension to dimension two of  Lemma 1.1 in \cite{NiSa}.
\begin{lem}({see \cite{T}})\label{lm04}
There exists a unique radial classical solution to
$$\left\{\begin{array}{rl}
-\Delta u=f(u):=u\,\left({\rm e}^{u^2}-1\right)\,\,\mbox{in}\,\, B_1\\
u>0,\\
u=0\,\, \mbox{in}\,\, \partial B_1.
\end{array}\right.$$
\end{lem}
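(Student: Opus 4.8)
Since we only want a positive \emph{radial} solution, I would pass to the ODE obtained from the ansatz $u=u(r)$, $r=|x|$:
\[
u''+\tfrac1r u'+f(u)=0 \quad\text{on }(0,1),\qquad u'(0)=0,\quad u(1)=0,\quad u>0,
\]
with $f(s)=s({\rm e}^{s^2}-1)$. Note $f(0)=f'(0)=0$, $f>0$ on $(0,\infty)$, $f$ is convex there ($f''(s)=(6s+4s^3){\rm e}^{s^2}>0$), and $s\mapsto f(s)/s={\rm e}^{s^2}-1$ is strictly increasing. For existence I would run the shooting method: for $a>0$ let $u_a$ solve the initial value problem with $u_a(0)=a$, $u_a'(0)=0$. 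Because $(ru_a')'=-rf(u_a)\le 0$ while $u_a>0$, the function $u_a$ is positive, decreasing and bounded by $a$, hence extends up to its first zero $R(a)$, which is finite (otherwise $ru_a'\to-\infty$, forcing $u_a\to-\infty$). Continuous dependence gives $R\in\mathcal C((0,\infty))$. For the limiting behaviour, compare with the pure cubic: since $f(s)\in[s^3,{\rm e}\,s^3]$ for $s\in[0,1]$, a Sturm/Wronskian comparison and the scaling of $-\Delta v=v^3$ in $\R^2$ give $R(a)\to+\infty$ as $a\to0^+$; and since $f(a/2)=\tfrac a2({\rm e}^{a^2/4}-1)$ blows up super-exponentially, the estimate $u_a(r)\le a-\tfrac{r^2}{4}f(a/2)$ (valid while $u_a\ge a/2$) forces $R(a)\to 0$ as $a\to+\infty$. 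By the intermediate value theorem there is $a$ with $R(a)=1$, which yields the desired solution. (Alternatively, existence follows from a mountain-pass argument for $J(u)=\tfrac12\int_{B_1}|\nabla u|^2-\int_{B_1}F(u)$ on the radial subspace of $H^1_0(B_1)$, using the Moser--Trudinger inequality together with $F(s)\sim s^4/4$ near $0$ and the Ambrosetti--Rabinowitz-type bound $sf(s)\ge(2+\varepsilon)F(s)$ to place the min-max level below the non-compactness threshold; but the shooting picture is what feeds into uniqueness.)

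\textbf{Regularity.} Any such $u$ lies in $H^1_0(B_1)\subset\bigcap_{p<\infty}L^p(B_1)$, and by the exponential-integrability argument of Lemma \ref{int} (which works verbatim on $B_1$) one has ${\rm e}^{u^2}-1\in\bigcap_{p<\infty}L^p(B_1)$, hence $f(u)\in\bigcap_{p<\infty}L^p(B_1)$. Elliptic regularity gives $u\in\bigcap_p W^{2,p}\subset\mathcal C^{1,\alpha}$, and since $f\in\mathcal C^\infty(\R)$ a standard bootstrap upgrades this to $u\in\mathcal C^\infty(B_1)\cap\mathcal C(\overline{B_1})$, i.e.\ a classical solution. (This also supplies assertions 1) and 2) of Theorem \ref{Singular} later, once the relevant function is shown to be in $\mathcal L$.)

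\textbf{Uniqueness.} By the Gidas--Ni--Nirenberg theorem every positive classical solution on the ball is radial and radially strictly decreasing, so uniqueness is equivalent to injectivity of $a\mapsto R(a)$. Differentiating $u_a(R(a))\equiv 0$ in $a$ gives $R'(a)=-\phi_a(R(a))/u_a'(R(a))$, where $\phi_a:=\partial_a u_a$ solves the linearized equation $\phi''+\tfrac1r\phi'+f'(u_a)\phi=0$ with $\phi_a(0)=1$, $\phi_a'(0)=0$, and $u_a'(R(a))<0$. Hence it suffices to prove $\phi_a(R(a))<0$ for every $a$: then $R$ is strictly decreasing, and together with $R(0^+)=+\infty$, $R(+\infty)=0$ there is exactly one $a$ with $R(a)=1$. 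To establish the sign, I would introduce a Pohozaev/energy companion $\psi_a$ built from $u_a$, $ru_a'$ and $F(u_a)$, use the identities it satisfies together with the Pohozaev identity on $B_{R(a)}$ to fix the sign $\psi_a(R(a))<0$, and then transfer this to $\phi_a$ by a Sturm comparison between $\phi_a$ and $\psi_a$ that is available precisely because $(f(s)/s)'>0$ (and $f$ is convex with $f(0)=f'(0)=0$).

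\textbf{Main obstacle.} The crux is exactly this monotonicity of $R(a)$. Since $f$ has no scaling invariance, the Lane--Emden identity $R(a)=a^{-(p-1)/2}R(1)$ is unavailable; moreover the naive hope that $\phi_a$ stays positive on $[0,R(a))$ is \emph{false} — it already fails for $-\Delta u=u^p$ — and the elementary Wronskian relation $\frac{d}{dr}\big[r(u_1'u_2-u_2'u_1)\big]=r\,u_1u_2\big(g(u_2)-g(u_1)\big)$ (with $g(s)=f(s)/s$) does not by itself rule out crossings of two solutions with different values at the origin. Controlling the location of the first zero of the linearized solution — where the fine structure of $f$ (its convexity, the monotonicity of $s\mapsto sf'(s)/f(s)$) must be used — is the real work, and is what the argument of \cite{T} provides.
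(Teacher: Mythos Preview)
The paper does not actually prove this lemma: it is stated with the parenthetical ``(see \cite{T})'' and no argument is given. In other words, the paper's ``proof'' is simply a citation of Tarsi's uniqueness result for exponential nonlinearities.

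Your proposal is therefore more detailed than what the paper offers. Your outline is correct in spirit and follows the standard route (which is essentially what \cite{T} does): reduce to the radial ODE, obtain existence by shooting on the central value $a$ and analysing the first-zero map $R(a)$, use Gidas--Ni--Nirenberg to reduce uniqueness among all positive classical solutions to uniqueness among radial ones, and then establish strict monotonicity of $R(a)$ via the linearized equation and a Pohozaev-type identity exploiting the convexity of $f$ and the monotonicity of $f(s)/s$. You correctly flag that this last step --- controlling the sign of the linearized solution at $R(a)$ --- is the genuine obstacle, and you defer it to \cite{T}, exactly as the paper does. So in the end you and the paper are citing the same source for the same hard step; you simply supply the surrounding scaffolding that the paper omits.

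One minor caution: your sketch of $R(a)\to+\infty$ as $a\to0^+$ via comparison with the pure cubic is morally right, but the Sturm comparison you invoke needs the inequality to go the correct way (you need a \emph{lower} bound on $R(a)$, hence an \emph{upper} bound on $f$ near $0$); make sure the direction is stated cleanly if you write this out in full. Likewise your crude upper bound $u_a(r)\le a-\tfrac{r^2}{4}f(a/2)$ is only valid on the interval where $u_a\ge a/2$, so the argument that $R(a)\to 0$ requires one more line to close. These are routine, but worth tightening.
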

\begin{lem}\label{lm05}
Let $u\in C^2(B_1\backslash\{0\}),\, u\geq 0,$ such that $-\Delta u=f(u)$ in $B_1\backslash\{0\}$. Then
\begin{enumerate}
\item[i)]
$f(u)\in L^1(B_1)$.
\item[ii)]
If $(-\log(|x|))^{\alpha}u^q\in L^1(B_1),$ for some $\frac{\alpha}{q-1}>0$, then
$$\Delta u+f(u)=0\quad\mbox{in}\quad {\mathcal D}^{'}(B_1).$$
\end{enumerate}
\end{lem}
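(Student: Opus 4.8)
\textbf{Plan of proof of Lemma \ref{lm05}.}

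The proof splits naturally into the two items. For part (i), the idea is to integrate the equation $-\Delta u = f(u)$ over an annulus $\{\varepsilon < |x| < 1\}$ and control the boundary terms as $\varepsilon \to 0$. Writing $u$ in terms of its spherical average (or noting that $-\Delta u = f(u) \geq 0$ makes $u$ superharmonic on the punctured ball, hence its spherical averages are monotone in $r$), one obtains via the divergence theorem
$$
\int_{\varepsilon < |x| < 1} f(u)\,dx = \int_{|x|=\varepsilon} \partial_\nu u \,d\sigma - \int_{|x|=1}\partial_\nu u\,d\sigma.
$$
The flux $\int_{|x|=\varepsilon}\partial_r u\,d\sigma$ is monotone in $\varepsilon$ by the superharmonicity (it equals $-\int_{\varepsilon<|x|<1}f(u) + \text{const}$), so it has a limit (possibly $+\infty$) as $\varepsilon \to 0$; the point is that this limit must be finite, because otherwise $u$ would behave like $c\log(1/|x|)$ with $c>0$ near the origin, and then $f(u)=u(e^{u^2}-1)$ would fail to be integrable near $0$ — but here I need to be slightly careful, since a priori I only know $u$ is superharmonic, not its precise growth. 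The clean way: by superharmonicity the spherical average $\bar u(r)$ is nonincreasing and the flux $2\pi r \bar u'(r)$ is nonincreasing as $r$ decreases; if $\int_{B_1} f(u) = +\infty$ then $2\pi r\bar u'(r)\to -\infty$, forcing $\bar u(r)\geq c\log(1/r)$ for small $r$, hence $u^2 \geq c^2 (\log(1/r))^2$ on a set of positive measure at each small radius (using that $\bar u \le u$ may fail — instead use that for superharmonic $u$, $u(x) \ge \bar u(|x|)$ is false; rather one uses the Harnack-type lower bound or works directly with the radial structure). In the radial-by-hypothesis setting this is immediate, but for general $u \in C^2(B_1\setminus\{0\})$ one applies the representation $u = c\,G + h$ near $0$ where $G$ is the fundamental solution, $c = \lim_{\varepsilon}\frac{1}{2\pi}\int_{|x|=\varepsilon}\partial_\nu u$, and $h$ is harmonic near $0$, deducing $c \ge 0$ and that $f(u)$ integrable $\iff$ this is consistent; I will invoke the standard removable-singularity / Riesz-representation machinery for superharmonic functions to make this rigorous.

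For part (ii), suppose $(-\log|x|)^\alpha u^q \in L^1(B_1)$ with $\alpha/(q-1)>0$ (so $q>1$, $\alpha>0$, or $q<1$, $\alpha<0$). I want to show the distributional Laplacian of $u$ on all of $B_1$ has no singular part at the origin, i.e. $\Delta u + f(u) = 0$ in $\mathcal D'(B_1)$. By part (i) and the local structure of superharmonic functions, we know $-\Delta u = f(u) + c\,\delta_0$ in $\mathcal D'(B_1)$ for some $c \geq 0$ (the ``mass'' at the origin), and the goal is exactly $c = 0$. The standard device is to test against a cutoff: pick $\psi \in C_c^\infty(B_1)$ with $\psi \equiv 1$ near $0$, and for small $\eta>0$ let $\psi_\eta(x) = \psi(x)\,\zeta(|x|/\eta)$ where $\zeta$ interpolates between $0$ on $[0,1]$ and $1$ on $[2,\infty)$; then
$$
c = \int_{B_1} u\,\Delta\bigl(\psi(1-\zeta(|x|/\eta))\bigr)\,dx + \int_{B_1} f(u)\,\psi(1-\zeta(|x|/\eta))\,dx + o(1),
$$
and one shows the right side $\to 0$ as $\eta \to 0$. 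The term with $f(u)$ vanishes by dominated convergence using part (i). The main term is $\int u\,\Delta(\text{cutoff near }0)$, where $\Delta$ of the cutoff is supported in $\eta < |x| < 2\eta$ and is of size $\eta^{-2}$; so this is bounded by $\eta^{-2}\int_{\eta<|x|<2\eta} u\,dx$, and I need this to go to $0$. Here is where the hypothesis enters: by Hölder with exponents $q$ and $q'$,
$$
\int_{\eta<|x|<2\eta} u\,dx \leq \Bigl(\int_{\eta<|x|<2\eta}(-\log|x|)^\alpha u^q\,dx\Bigr)^{1/q} \Bigl(\int_{\eta<|x|<2\eta}(-\log|x|)^{-\alpha/(q-1)}\,dx\Bigr)^{1/q'},
$$
the first factor is $o(1)$ (tail of an integrable function), and the second factor is $\lesssim \eta^{2/q'}(\log(1/\eta))^{-\alpha/(q(q-1))}\cdot(\text{const})$; combining, $\eta^{-2}\int_{\eta<|x|<2\eta}u \lesssim o(1)\cdot \eta^{2/q'-2}(\cdots) = o(1)\cdot\eta^{-2/q}(\cdots)$. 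This does \emph{not} obviously vanish, so the crude estimate must be refined — the right move is to use the annular Hölder more carefully (the $o(1)$ factor is a genuine tail bound, and one must balance it against the $\eta^{-2/q}$ by also exploiting that the annuli are disjoint, summing a dyadic series $\sum_j 2^{2j/q}\,a_j$ where $\sum_j a_j^q < \infty$ ... which still needs care). Alternatively, and more robustly, one chooses the cutoff to be \emph{logarithmic}: $\psi_\eta(x)$ equal to $1$ for $|x|<\eta^2$, equal to $0$ for $|x|>\eta$, and equal to $\log(|x|/\eta)/\log(\eta)$ in between, so that $|\nabla \psi_\eta| \lesssim 1/(|x|\,|\log\eta|)$ and $\Delta\psi_\eta$ is a measure supported on the two spheres; this replaces the $\eta^{-2}$ loss by a $(\log\eta)^{-1}$ gain, and then the hypothesis $(-\log|x|)^\alpha u^q \in L^1$ is precisely tailored to kill the remaining term. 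This logarithmic-cutoff computation is the technical heart, mirroring Lemma 1.1 of \cite{NiSa}; I expect it to be the main obstacle, since getting the exponents on the logarithm to match requires the precise hypothesis $\alpha/(q-1)>0$ and a careful integration in polar coordinates.

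In summary: first establish $f(u) \in L^1$ by integrating the equation over annuli and using that superharmonicity forces the origin-flux to converge (part (i)); this also yields $-\Delta u = f(u) + c\delta_0$ in $\mathcal D'(B_1)$ with $c \ge 0$. Then for part (ii) test this identity against a logarithmic cutoff $\psi_\eta$ vanishing near the origin, use part (i) to dispatch the $f(u)$ contribution, and use the integrability hypothesis together with $|\nabla\psi_\eta| \lesssim (|x||\log\eta|)^{-1}$ and polar-coordinate integration to show the $\int u\,\Delta\psi_\eta$ term tends to $0$, forcing $c=0$. The delicate point throughout is that $u$ need only be assumed $C^2$ on the \emph{punctured} ball and nonnegative; all the classical structure (Riesz representation, $c\ge0$, removability criterion) has to be brought in to legitimize the manipulations near $0$, and the logarithmic-cutoff balancing in part (ii) is where the stated hypothesis is used in an essential way.
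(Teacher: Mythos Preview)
Your plan is workable, but it diverges from the paper's argument in both parts.

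For (i), the paper avoids Riesz representation and superharmonic extension entirely. It builds an explicit radial test function $\varphi_\varepsilon(x)=g(\varepsilon/|x|)$ with $g(a)=e^{1-1/(a-1)^2}\chi_{[0,1[}$, chosen so that $\varphi_\varepsilon\equiv 0$ on $\{|x|\le\varepsilon\}$, $\varphi_\varepsilon\to 1$ pointwise on $B_1\setminus\{0\}$, and---this is the point---$\Delta\varphi_\varepsilon\ge 0$ on $B_1$. After two integrations by parts in $\int f(u)\varphi_\varepsilon=-\int(\Delta u)\varphi_\varepsilon$, the sign conditions $u\ge 0$ and $\Delta\varphi_\varepsilon\ge 0$ allow the bulk term $\int u\,\Delta\varphi_\varepsilon$ to be dropped, leaving a bound depending only on boundary data at $|x|=1$; Fatou then gives $f(u)\in L^1$. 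This is more hands-on and self-contained than your potential-theoretic route, though your approach has the advantage of producing the structural identity $-\Delta u=f(u)+c\delta_0$, $c\ge 0$, in one stroke. Your written argument for (i), however, is circular as it stands (you derive ``$f(u)\notin L^1$'' from the assumption ``$\int f(u)=\infty$''); it only becomes a proof once you actually invoke the extension theorem for nonnegative superharmonic functions across a polar set.

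For (ii), the paper uses precisely the \emph{standard} cutoff $\tau_\varepsilon(x)=\tau(|x|/\varepsilon)$ that you tried first and set aside. It bounds $\big|\int u\phi\,\Delta\tau_\varepsilon\big|$ via H\"older with the weight $\chi=(-\log|x|)^\alpha$, arriving at $\varepsilon^{-2}\big(\int_{B_{2\varepsilon}}\chi u^q\big)^{1/q}\big(\int_{B_{2\varepsilon}}\chi^{-q'/q}\big)^{1/q'}$, and concludes from $\int_0^2(-\log(\varepsilon r))^{-\alpha q'/q}r\,dr\to 0$. Your concern that this yields only $\varepsilon^{-2/q}\cdot o(1)$, which does not obviously vanish, is well-founded: the paper's displayed passage from $\varepsilon^{-2}$ to $\varepsilon^{-(2-2/q)}$ is not justified as written. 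Your proposed remedy---a logarithmic cutoff, harmonic on the annulus $\eta^2<|x|<\eta$, so that $|\nabla\psi_\eta|\lesssim(|x|\,|\log\eta|)^{-1}$---converts the polynomial loss into a logarithmic one and is the natural two-dimensional analogue of the Ni--Sacks device; with it the hypothesis $\alpha/(q-1)>0$ is exactly what is needed to close the estimate. So for (ii) your route is not only different but arguably more transparent than what the paper records.
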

\begin{proof}[Proof of Lemma \ref{lm05}]
The proof contains two steps.
\begin{enumerate}
\item[i)]
Let $g(a):={\rm e}^{1-\frac{1}{(a-1)^2}}\chi_{[0,1[}$ and $\varphi_{\varepsilon}:x\longmapsto g(\frac{\varepsilon}{|x|})$, for $\varepsilon>0$. So
$$
\forall\; |x|\leq\varepsilon,\,\varphi_{\varepsilon}(x)=0,\,\lim_{x\neq0,\varepsilon\rightarrow 0}\varphi_{\varepsilon}(x)=1\,\mbox{ and }\,\Delta \varphi_{\varepsilon}\geq 0\;\mbox{ on }\;B_1.
$$
By Fatou's Lemma
$$
\|f(u)\|_{L^1(B_1)}\leq\liminf_{\varepsilon\rightarrow 0}\|f(u)\varphi_{\varepsilon}\|_{L^1(B_1)}
$$
Moreover
\begin{eqnarray*}
\|f(u)\varphi_{\varepsilon}\|_{L^1(B_1)}
&=&-\int_{B_1}\Delta u\varphi_{\varepsilon}(x)dx\\
&=&-\pi\Big(\int_{\varepsilon}^1\dot u(r)\varphi_{\varepsilon}({r})dr-\int_{\varepsilon}^1\dot u(r)(\varphi_{\varepsilon}(r)+r\dot{\varphi}_{\varepsilon}({r}))dr+\dot u(1)g(\varepsilon)\Big)\\
&=&-\pi\Big(\int_{\varepsilon}^1u(r)(r\ddot{\varphi}_{\varepsilon}({r})+\dot{\varphi}_{\varepsilon}({r}))dr-u(1)\dot{\varphi}_{\varepsilon}(1)+\dot u(1)g(\varepsilon)\Big)\\
&\leq&-\pi\Big(-u(1)\dot{\varphi}_{\varepsilon}(1)+\dot u(1)g(\varepsilon)\Big).
\end{eqnarray*}
Since the right hand side is bounded uniformly on $\varepsilon$, we conclude that $f(u)\in L^1(B_1)$.
\item[2i)]
Let $\tau\in C^{\infty}(\R_+)$, satisfying $\tau=0$ on $[0,1]$, $\tau =1$ on $[2,\infty)$ and $0\leq\tau\leq 1$. For $\varepsilon>0$, take $\tau_{\varepsilon}(x):=\tau(\frac{|x|}{\varepsilon})$. It is sufficient to prove that
$$\lim_{\varepsilon\rightarrow 0}\int_{B_1}\Big(u\Delta\phi+f(u)\phi\Big)\tau_{\varepsilon}=0,\quad\forall\;\phi\in{\mathcal D}(B_1).$$
Write
$$\int_{B_1}\Big(u\Delta\phi+f(u)\phi\Big)\tau_{\varepsilon}
=\int_{B_1}\Big((\Delta(\phi\tau_{\varepsilon})-2\nabla \phi\nabla \tau_{\varepsilon}-\phi\Delta\tau_{\varepsilon})u+f(u)\tau_{\varepsilon}\phi\Big),$$
so it is enough to show that
$$\lim_{\varepsilon\rightarrow 0} \int_{B_1}u\nabla \phi .\nabla \tau_{\varepsilon}=\lim_{\varepsilon\rightarrow 0} \int_{B_1}u\phi\Delta\tau_{\varepsilon}=0. $$
Let $q>1,\alpha>0$ and $\chi(x):=(-\log(|x|))^{\alpha},h:=\chi u^q.$ We have
\begin{eqnarray*}
\Big|\int_{B_1}u\phi\Delta\tau_{\varepsilon}\Big|
&\lesssim&\frac{1}{\varepsilon^2}\int_{B_{2\varepsilon}} h^{\frac{1}{q}}\chi^{\frac{-1}{q}}\\
&\lesssim&\frac{1}{\varepsilon^2}\Big(\int_{B_{2\varepsilon}} h\Big)^{\frac{1}{q}}\,\Big(\int_{B_{2\varepsilon}}\chi^{\frac{-q^{'}}{q}}\Big)^{\frac{1}{q^{'}}}\\
&\lesssim&\frac{1}{\varepsilon^{2-\frac{2}{q}}}(\int_{B_{2\varepsilon}}\chi^{\frac{-q^{'}}{q}})^{\frac{1}{q^{'}}}.
\end{eqnarray*}
In addition
\begin{eqnarray*}
\int_{B_{2\varepsilon}}\chi^{\frac{-q^{'}}{q}}
&=&\pi\int_0^{2\varepsilon}(-\log(r))^{\frac{-\alpha q^{'}}{q}}rdr\\
&=&\pi\varepsilon^2\int_0^{2}(-\log(\varepsilon r))^{\frac{-\alpha q^{'}}{q}}rdr.
\end{eqnarray*}
Since $$\lim_{\varepsilon\rightarrow 0}\int_0^{2}(-\log(\varepsilon r))^{\frac{-\alpha q^{'}}{q}}rdr=0,$$
we deduce that
$$\lim_{\varepsilon\rightarrow 0}\int_{B_1}u\phi\Delta\tau_{\varepsilon}=0.$$
Similarly, we have
$$\lim_{\varepsilon\rightarrow 0}\int_{B_1}u\nabla\phi.\nabla\tau_{\varepsilon}=0.$$
This finishes the proof.
\end{enumerate}
\end{proof}

\begin{proof}[Proof of Theorem \ref{Singular}]
First, we prove the existence of singular solutions. For any $\alpha>0$, we denote the Cauchy problem
$$(\mathcal P_{\alpha})\left\{\begin{array}{rl}
-\ddot y(t)={\rm e}^{-2t}f(y(t)),\quad t\geq 0\\
y(0)=0,\quad\dot y(0)=\alpha.
\end{array}\right.$$
Using the changing $r={\rm e}^{-t}, u_{\alpha}(x):=u_{\alpha}(|x|)=y_{\alpha}(t)$, we have classical radial solution to the elliptic problem
$$(\mathcal E_{\alpha})\left\{\begin{array}{rl}
-\Delta u_{\alpha}=f(u_{\alpha}),\quad 0<|x|\leq 1\\
u_{\alpha}(1)=0,\quad u_{\alpha}(0)=y_{\alpha}(\infty).
\end{array}\right.$$
Observe that if $\displaystyle\lim_{t\rightarrow\infty}y_{\alpha_0}(t)=l\in (0,\infty)$, for some  $\alpha_0>0$. Then $u_{\alpha_0}$ is a radial classical solution to $(\mathcal E_{\alpha_0})$ which satisfies $u_{\alpha_0}\in L^{\infty}(B)$. Then thanks to the elliptic regularity, $u_{\alpha_0}$ is necessarily a classical radial solution in the unit ball. Moreover $u_{\alpha_0}>0$ on $B(r_0)$ for some $r_0>0$. Lemma \ref{lm04} guarantees the existence of a unique $\alpha_0\in\R$ such that $\displaystyle\lim_{t\rightarrow\infty}y_{\alpha_0}(t)=l\in (0,\infty).$

Let $T(\alpha)$ be the first time for which $y_\alpha$ vanishes i.e.
$$
T(\alpha):=\sup\{s\geq 0,\,\,\mbox{ s.t }\,\, y_{\alpha}>0 \,\,\mbox{ on }\,\, (0,s)\},$$
and let $I$ be the domain of $T$ i.e.
$$
 I:=\{\alpha>0,\,\,\mbox{ s.t }\,\, T(\alpha)<\infty\}.
$$
Clearly, we have $\alpha_0\in J:=\{\alpha>0, \alpha\notin I\}$. Moreover, if there is an $\alpha\in J-\{\alpha_0\}$, then the fact that $y_{\alpha}$ is positive and concave on $(0,\infty)$ would imply $\displaystyle\lim_{t\rightarrow\infty}y_{\alpha}(t)=+\infty$. Thus, it is sufficient to prove that $J-\{\alpha_0\}$ is a not an empty interval.\\
Assume that $J-\{\alpha_0\}$ is  empty. Then necessarily $I=(0,\alpha_0)\cup (\alpha_0,\infty):=I_1\cup I_2$.\\
Now since the function $T:I\rightarrow (0,\infty)$ is a continuous, then $T(I_i)$ is an interval.
Moreover, $T(I_1)\cap T(I_2)$ should be empty because $T$ it is one to one by the uniqueness of Cauchy problem for the ODE. This is absurd because for some positive real $A$ big enough, $(A,\infty)\subset T(I_1)\cap T(I_2)$.\\
Now we repeat that same argument again. Let  $\alpha_1\in J-\{\alpha_0\}$.
Assume that $J_1:=J-\{\alpha_0,\alpha_1\}$ is  empty. Then $I=(0,\alpha_0)\cup(\alpha_0,\alpha_1)\cup (\alpha_1,\infty):=I_0\cup I_1\cup I_2$.\\
Now $T:I\rightarrow (0,\infty)$ is a continuous function, so $T(I_i)$ is an interval.
Moreover, $T(I_1)\cap T(I_2)$ is empty because $T$ it is one to one by uniqueness of elliptic associated problem.
Which is absurd because for some positive real $A$ big enough, $(A,\infty)\subset T(I_0)\cap T(I_1)\cap T(I_2)$. Thus $J_1$ is not empty. \\
The same reasoning proves that $I$ is connected and so $J$ is a not empty interval of $(0,\infty)$. The proof of this part is achieved.\\
Lemma \ref{lm05} proves that the above classical solution in $B_1\backslash\{0\}$ is extendable to a distribution solution in $B_1$.
Note that using i$)$ and H\"older inequality, the condition ii$)$ in Lemma \ref{lm05} is clearly satisfied for $f(u)=u({\rm e}^{u^2}-1)$.\\

Now we prove the last part of Theorem \ref{Singular} about Orlicz properties of singular solution.
The proof contains two parts
\begin{enumerate}
\item[i)]
Since $u(0)=\infty$, there exists $r>0$ such that $|u(x)|>1$ for any $|x|<r$. Hence
$$\int_{|x|<r}({\rm e}^{u^2}-1)dx\leq \int_{|x|<r}|u|({\rm e}^{u^2}-1)dx\leq \|f(u)\|_{L^1(B_1)}.$$
Therefore ${\rm e}^{u^2}-1\in L^1(B_1)$ via the precedent Lemma, the fact that $u\in C^2(B_1\backslash\{0\})$ and $u(|x|=1)=0$. It follows that $u\in {\mathcal L}(B_1)$.
\item[ii)]
Let $0<\varepsilon\leq 1$. By H\"older inequality we have for any $r>0$
\begin{eqnarray*}
\int_{|x|<r}\Big({\rm e}^{(\frac{u}{\varepsilon})^2}-1\Big)dx
&\lesssim&r\Big[\int_{B_1}\Big({\rm e}^{2(\frac{u}{\varepsilon})^2}-1\Big)dx\Big]^{\frac{1}{2}}.
\end{eqnarray*}
Thus, there exists $r_{\varepsilon}>0$ such that for $0\leq r<r_{\varepsilon}$, we have
$$\int_{|x|<r}\Big({\rm e}^{(\frac{u}{\varepsilon})^2}-1\Big)dx\leq 1.$$
The last inequality implies in particular that
$$\|u\|_{\mathcal L(|x|<r)}\leq \varepsilon.$$
The proof of Theorem \ref{Singular} is achieved.
\end{enumerate}
\end{proof}
%%%%%%%%%%%%%%%%%%%%%%%%%%%%%%%%%%%%%%%%%%%%%%%%%%%%%%%%%%%%%%%%%%%%%%%%%%%%%%%%%%%%%%%%%%%%%%%%%%%%%%%%%%%%%%%%%%%%%%
\subsection{Construction of a bounded solution in Orlicz space}
%%%%%%%%%%%%%%%%%%%%%%%%%%%%%%%%%%%%%%%%%%%%%%%%%%%%%%%%%%%%%%%%%%%%%%%%%%%%%%%%%%%%%%%%%%%%%%%%%%%%%%%%%%%%%%%%%%%%%%%%%%%%%%%%%%
\begin{lem}\label{aj}
Let $Q$ to be a singular solution given by Theorem \ref{Singular}. Then the Cauchy problem
\begin{equation}
\label{eq1R'}
\left\{
\begin{matrix}
\partial_t u-\Delta u= f(u)\quad\mbox{in}\quad B_1\\
u(0)= Q,\,\, u(|x|=1)=0\\
\end{matrix}
\right.
\end{equation}
has a local solution $u\in L^{\infty}([0,T]; \mathcal L(B_1))$.
\end{lem}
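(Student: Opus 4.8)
The plan is to mimic the decomposition strategy used in Section~4 for the $H^1$ local existence, but adapted to the fact that the singular data $Q$ only lies in the Orlicz space $\mathcal L(B_1)$ rather than in $H^1$. The key structural input is part~(2) of Theorem~\ref{Singular}: $\|Q\|_{\mathcal L(|x|<r)}\to 0$ as $r\to 0$. So first I would fix a small radius $\rho>0$ and split $Q = Q_{\mathrm{reg}} + Q_{\mathrm{sing}}$, where $Q_{\mathrm{sing}} := \eta\, Q$ with $\eta$ a smooth cutoff supported in $|x|<2\rho$ and equal to $1$ on $|x|<\rho$, and $Q_{\mathrm{reg}} := (1-\eta)Q$. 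By construction $Q_{\mathrm{reg}}\in C^2(\overline{B_1})$ (the singularity at the origin has been removed) and vanishes on $\partial B_1$, while $Q_{\mathrm{sing}}\in\mathcal L(B_1)$ with $\|Q_{\mathrm{sing}}\|_{\mathcal L(B_1)}$ as small as we wish by choosing $\rho$ small, since $Q_{\mathrm{sing}}$ is supported where $\|Q\|_{\mathcal L}$ is tiny.

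Next I would solve the equation with the regular part: since $Q_{\mathrm{reg}}\in H^1\cap L^\infty(B_1)$ (indeed it is $C^2$), Proposition~\ref{lm01} (or its obvious analogue on the ball with Dirichlet boundary conditions) yields a local solution $v\in\mathcal C([0,T];H^1\cap L^\infty(B_1))$ of $\partial_t v-\Delta v=f(v)$, $v(0)=Q_{\mathrm{reg}}$, $v|_{\partial B_1}=0$, on some time interval $[0,T]$ depending only on $\|Q_{\mathrm{reg}}\|_{H^1\cap L^\infty}$. Then I would look for the solution of \eqref{eq1R'} in the form $u=v+w$, where $w$ solves the perturbed problem
\begin{equation}
\left\{
\begin{matrix}
\partial_t w-\Delta w = f(v+w)-f(v)\quad\mbox{in}\quad B_1\\
w(0)=Q_{\mathrm{sing}},\quad w|_{\partial B_1}=0.
\end{matrix}
\right.
\end{equation}
Writing $w = {\rm e}^{t\Delta}Q_{\mathrm{sing}} + \int_0^t {\rm e}^{(t-s)\Delta}\bigl(f(v+w)-f(v)\bigr)(s)\,ds$, I would run a fixed point argument in a small ball of $\mathcal C([0,T];\mathcal L(B_1))$ (intersected, if convenient, with a space carrying a mild $L^p$ or $L^\infty$ time-weighted norm to absorb the nonlinearity). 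The free term ${\rm e}^{t\Delta}Q_{\mathrm{sing}}$ has $\mathcal L$-norm controlled by $\|Q_{\mathrm{sing}}\|_{\mathcal L}$ using property~\textbf{c)} of Proposition~\ref{or1} (the heat semigroup is bounded on $L^1$ and $L^\infty$, hence on $\mathcal L$), and this is small. For the Duhamel term one uses Lemma~\ref{f} to write $|f(v+w)-f(v)|\lesssim |w|\bigl({\rm e}^{(1+\varepsilon)(v+w)^2}-1 + {\rm e}^{(1+\varepsilon)v^2}-1\bigr)$; since $v\in L^\infty$ this is bounded pointwise by $|w|\,({\rm e}^{C(1+w^2)}\cdot\text{stuff})$, and one estimates $\|w(\cdot)(\ldots)\|$ via Hölder together with property~\textbf{d)} of Proposition~\ref{or1} ($\|\cdot\|_{L^p}\le (\Gamma(p/2+1))^{1/p}\|\cdot\|_{\mathcal L}$) exactly as in the proof of Proposition~\ref{int1}, gaining a factor $T^\theta$ (or a small factor from the smallness of $\|w\|_{\mathcal L}$) that makes the map a contraction for $T$ small.

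The main obstacle I anticipate is the estimate of the nonlinear Duhamel term in the Orlicz norm: unlike the $H^1$ setting there is no Moser--Trudinger inequality available for $w\in\mathcal L$ alone (Moser--Trudinger needs a gradient bound), so the exponential factor ${\rm e}^{(1+\varepsilon)(v+w)^2}-1$ cannot be controlled by ${\rm e}^{t\Delta}$-smoothing directly. The fix is to exploit that, after the smoothing effect, $\int_0^t {\rm e}^{(t-s)\Delta}$ gains integrability, so one can afford to measure the nonlinearity in $L^1_x$ or $L^{1+}_x$, where the power-series bound $\|{\rm e}^{2|vw|}-1\|_{L^1}\le 1 - (1-2\|v\|_{\mathcal L}\|w\|_{\mathcal L})^{-1}$ from Proposition~\ref{int1} applies and is small. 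Concretely, I would close the argument in the norm $\|w\|_{X_T} := \sup_{0<t\le T}\|w(t)\|_{\mathcal L(B_1)} + \sup_{0<t\le T} t^{1/2}\|w(t)\|_{L^\infty(B_1)}$: the $L^\infty$ part lets me treat the exponential factor as a bounded multiplier, the $\mathcal L$ part carries the low-regularity data, and the two interact through Hölder and Proposition~\ref{or1}\,\textbf{d)}. Once $u=v+w$ is constructed on $[0,T]$, it lies in $\mathcal C([0,T];\mathcal L(B_1))$, in particular in $L^\infty([0,T];\mathcal L(B_1))$, which is the assertion of the lemma; I would also remark that $u(0)=v(0)+w(0)=Q_{\mathrm{reg}}+Q_{\mathrm{sing}}=Q$ and that $u$ solves \eqref{eq1R'} in the Duhamel (hence distributional) sense.
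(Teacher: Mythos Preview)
Your overall architecture --- split $Q=Q_{\mathrm{reg}}+Q_{\mathrm{sing}}$ using Theorem~\ref{Singular}(2), solve the regular problem in $H^1\cap L^\infty$ to get $v$, then run a fixed point for the perturbed equation satisfied by $w=u-v$ --- is exactly the paper's strategy, and your use of Proposition~\ref{or1}(c) to bound $\|{\rm e}^{t\Delta}Q_{\mathrm{sing}}\|_{\mathcal L}$ is also how the paper handles the linear part.

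The divergence from the paper, and the place where your proposal has a genuine gap, is in the norm you choose to close the contraction. You propose adding the component $\sup_{0<t\le T} t^{1/2}\|w(t)\|_{L^\infty}$ and say ``the $L^\infty$ part lets me treat the exponential factor as a bounded multiplier.'' That does not close: on the ball you would only have $\|w(s)\|_{L^\infty}\lesssim s^{-1/2}$, so ${\rm e}^{(1+\varepsilon)w(s)^2}$ is pointwise bounded by ${\rm e}^{C/s}$, and the Duhamel integral picks up a factor $\int_0^t (t-s)^{-1/(1+\varepsilon)}{\rm e}^{C/s}\,ds$, which diverges at $s=0$. The time weight is too weak to tame an exponential of the solution.

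The paper avoids this entirely by working only in $X_T=L^\infty([0,T];\mathcal L(B_1))$ and using the Orlicz norm \emph{itself} to control the exponential: if $\|\psi\|_{\mathcal L}\le\lambda$ then by \eqref{norm1} one has $\int({\rm e}^{\psi^2/\lambda^2}-1)\,dx\le 1$, hence $\|{\rm e}^{\alpha\psi^2}-1\|_{L^1}\le 1$ whenever $\alpha\le\lambda^{-2}$. So choosing $R$ (hence $\|Q_{\mathrm{sing}}\|_{\mathcal L}$) and the ball radius $r$ small enough forces $\|w_i+w_l\|_{\mathcal L}$ to be small, and then $\|{\rm e}^{2(1+2\varepsilon)(w_i+w_l)^2}-1\|_{L^{1+2\varepsilon}}\lesssim 1$ directly --- no Moser--Trudinger and no $L^\infty$ control on $w$ are needed. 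With this in hand the paper estimates $\|f(u_1)-f(u_2)\|_{L^{1+\varepsilon}}\le C_{r,R}\|w\|_T$ (using $\|v\|_{L^\infty}$ only to peel off ${\rm e}^{2v^2}$), then bounds the Duhamel term separately in $L^1$ and in $L^\infty$ via the smoothing estimate \eqref{smoth}, and finally interpolates to $\mathcal L$ by Proposition~\ref{or1}(c). The contraction factor is a power of $T$. Your parenthetical ``or a small factor from the smallness of $\|w\|_{\mathcal L}$'' is the correct mechanism; drop the time-weighted $L^\infty$ component and develop that line instead.
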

\begin{proof}[Proof of Lemma \ref{aj}]
 For $0<R\leq 1$ we denote by $\chi_R$ the radial function $\chi_R\in C^{\infty}_0(\R^2)$ such that $\chi_R(x)=1$ for $|x|\leq R$ and $\chi_R(x)=0$ for $|x|\geq 2R$. We decompose $$Q=(1-\chi_R)Q+\chi_RQ:=Q_1+Q_2.$$
Note that from the properties of the singular solution $Q$, we have $Q_1\in (H^1_0\cap L^{\infty})(B_1)$ and $Q_2\in \mathcal L(B_1)$ with $\displaystyle\lim_{R\rightarrow 0}\|Q_2\|_{\mathcal L(B_1)}= 0$. For the rest of the proof, we fix $R>0$ such that $\|Q_2\|_{\mathcal L(B_1)}<\min\{1,\frac{1}{4\kappa}\}$, where $\kappa$ is defined in Proposition \ref{or1}.\\
First, consider the problem
\begin{equation}
\label{eq1R}
\left\{
\begin{matrix}
\partial_t v-\Delta v= f(v)\quad\mbox{in}\quad B_1\\
v(0)= Q_1,\,\, v(|x|=1)=0.\\
\end{matrix}
\right.
\end{equation}
Using the same previous arguments, we have the existence of $ T:=T_R>0$ and $v\in {\mathcal C}([0,T); L^{\infty}\cap H^1_0(B_1))$ solution to \eqref{eq1R}.\\
Consider now the perturbed problem
\begin{equation}
\label{eq2R}
\left\{
\begin{matrix}
\partial_t w-\Delta w= f(v+w)-f(v)\quad\mbox{in}\quad B_1\\
w(0)= Q_2,\,\, w(|x|=1)=0,\\
\end{matrix}
\right.
\end{equation}
and denote by $$X_T:=L^{\infty}([0,T];\mathcal L(B_1)),\quad\mbox{endowed with the norm}\quad \|\,.\,\|_{T}:=\|\,.\,\|_{L^{\infty}([0,T],\mathcal L(B_1))}.$$
Define $w_l:={\rm e}^{t\Delta}Q_2$ and consider the map
$$\Phi : w\mapsto \tilde w:=\int_0^t{\rm e}^{(t-s)\Delta}(f(v+w+w_l)-f(v))(s)ds.$$
We prove that $\Phi$ is a contraction in the ball $B_T(r)$ of $X_T$ for some small $r,\;T>0$.\\
Let $w_1,w_2\in B_T(r)$ and set $w:=w_1-w_2$ and $u_i:=v+w_i+w_l, i\in \{1,2\}$. By the smoothing effect \eqref{smoth}, we have for any $\varepsilon>0$
\begin{equation}\label{sm}
\|\tilde w_1-\tilde w_2\|_{L^1}\lesssim \int_0^t\|f(u_1)-f(u_2)\|_{L^1}ds,\quad\|\tilde w_1-\tilde w_2\|_{L^{\infty}}\lesssim\int_0^t \frac{\|f(u_1)-f(u_2)\|_{L^{1+\varepsilon}}}{(t-s)^{\frac{1}{1+\varepsilon}}}ds.
\end{equation}
Now, using a convexity argument,  H\"older inequality and Proposition \ref{or1}, we have for small $\varepsilon>0$,
\begin{eqnarray*}
\|f(u_1)-f(u_2)\|_{L^{1+\varepsilon}}
&\lesssim&\sum_{i=1}^2\|wu_i^2{\rm e}^{u_i^2}\|_{L^{1+\varepsilon}}\\
&\lesssim&{\rm e}^{2\|v\|_{L^{\infty}}^2}\sum_{i=1}^2\|wu_i^2{\rm e}^{2(w_i+w_l)^2}\|_{L^{1+\varepsilon}}\\
&\lesssim&{\rm e}^{2\|v\|_{L^{\infty}}^2}\sum_{i=1}^2(\|wu_i^2\|_{L^{1+\varepsilon}}+\|w({\rm e}^{2(w_i+w_l)^2}-1)\|_{L^{1+\varepsilon}})\\
&\lesssim&{\rm e}^{2\|v\|_{L^{\infty}}^2}\|w\|_{\mathcal L}\sum_{i=1}^2(\|u_i^2\|_{L^{1+2\varepsilon}}+\|{\rm e}^{2(w_i+w_l)^2}-1\|_{L^{1+2\varepsilon}})\\
&\lesssim&{\rm e}^{2\|v\|_{L^{\infty}}^2}\|w\|_{\mathcal L}\sum_{i=1}^2(r^2+\|v\|_{H^1}^2+\|w_l\|_{\mathcal L}^2+\|{\rm e}^{2(w_i+w_l)^2}-1\|_{L^{1+2\varepsilon}}).
\end{eqnarray*}
Moreover, we have
\begin{eqnarray*}
\int_{\R^2}({\rm e}^{ 2(1+2\varepsilon)(w_1+w_l)^2}-1)dx
&\leq&\int_{\R^2}({\rm e}^{4(1+2\varepsilon)[w_1^2+w_l^2]}-1)dx\\
&\leq&\int_{\R^2}({\rm e}^{ 4(1+2\varepsilon)w_1^2}-1)({\rm e}^{4(1+2\varepsilon)w_l^2}-1)dx\\
&+&\int_{\R^2}({\rm e}^{4(1+2\varepsilon)w_1^2}-1)dx+\int_{\R^2}({\rm e}^{4(1+2\varepsilon)w_l^2}-1)dx.
\end{eqnarray*}
Taking $r>0$ small enough, we have, by H\"older inequality and Proposition \ref{or1},
 $$\int_{\R^2}({\rm e}^{2(1+2\varepsilon)(w_1+w_l)^2}-1)dx\lesssim 1+\int_{\R^2}({\rm e}^{4(1+4\varepsilon)w_l^2}-1)dx.$$
By $L^{\infty}-L^1$ interpolation (see Proposition \ref{or1}), we have $\|w_l\|_{\mathcal L}\leq \kappa \|Q_2\|_{\mathcal L}<\frac{1}{4}$. Hence, for $\varepsilon>0$ small enough we have $ \int_{\R^2}({\rm e}^{4(1+4\varepsilon)w_l^2}-1)dx\leq 1$. Therefore
$$\|f(u_1)-f(u_2)\|_{L^{1+\varepsilon}}\leq C_{r,R}\|w\|_T.$$
Using \eqref{sm}, we have for small $T>0$,
$$\|\tilde w_1-\tilde w_2\|_{T}\lesssim T\|w\|_T.$$
Moreover, taking in the last inequality $w_2=0$,
$$\|\tilde w_1\|_{T}\lesssim Tr+\|\int_0^t{\rm e}^{(t-s)\Delta}(f(w_l+v)-f(v))ds\|_T.$$
 Using similar arguments we clearly have
 $$
 \|\int_0^t{\rm e}^{(t-s)\Delta}(f(w_l+v)-f(v))ds\|_T\lesssim T\,.
 $$
All this shows that $\Phi$ is a contraction of $B_T(r)$ for small positive $r,T$. Let $w$ be the fixed point of $\Phi$ in $B_T(r)$, and $u:=v+w+w_l$. Clearly $u$ is the desired solution of Problem \eqref{eq1R'} in $L^{\infty}([0,T]; \mathcal L(B_1))$.
\end{proof}
%%%%%%%%%%%%%%%%%%%%%%%%%%%%%%%%%%%%%%%%%%%%%%%%%%%%%%%%%%%%%%%%%%%%%%%%%%%%%%%%%%%%%%%%%%%%%%%%%%%%%%%%%
\subsection{Non uniqueness}
%%%%%%%%%%%%%%%%%%%%%%%%%%%%%%%%%%%%%%%%%%%%%%%%%%%%%%%%%%%%%%%%%%%%%%%%%%%%%%%%
In this subsection we construct a non stationary solution $u$ to the Cauchy problem \eqref{eq2} in $\mathcal L(B_1)$ with initial data $Q$. We show that its potential term will be slightly better than $L^1$. This will suffices to apply Brezis-Cazenave result about regularization effect of the heat equation. Thus, the solution satisfies $u\in L^{\infty}((0,T);L^{\infty})$.
\begin{prop}\label{smm}
Let $u_0\in \mathcal L(B_1)$ and $u\in L^{\infty}((0,T);\mathcal L(B_1))$ solution to \eqref{eq2} with data $u_0$ such that $\|u\|_{L^{\infty}([0,T];\mathcal L(B_1))}\leq \frac{1}{1+\varepsilon}$ for some $\varepsilon>0$. Then
$$u\in L^{\infty}([0,T];L^{\infty}).$$
\end{prop}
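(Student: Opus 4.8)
The plan is to recast \eqref{eq2} as a linear heat equation with potential and then apply the Brezis--Cazenave regularization result, Theorem \ref{BrCaz}. Since $f(u)=u(e^{u^2}-1)$, the solution $u$ solves
$$
\partial_t u-\Delta u-a(t,x)\,u=0\quad\mbox{in}\quad B_1,\qquad u|_{\partial B_1}=0,\qquad u(0)=u_0,
$$
with potential $a(t,x):=e^{u(t,x)^2}-1$. The one hypothesis of Theorem \ref{BrCaz} to be checked is that this potential is slightly better than integrable in space, namely $a\in L^\infty([0,T];L^\sigma(B_1))$ for some $\sigma>1$; this is exactly the place where the smallness assumption $\|u\|_{L^\infty([0,T];\mathcal L(B_1))}\le\frac{1}{1+\varepsilon}$ is used. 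Granting it, Theorem \ref{BrCaz} (on $\Omega=B_1$, with $r=2$, which is allowed since $u_0\in\mathcal L(B_1)\subset L^2(B_1)$ by Proposition \ref{or1}) yields a unique solution of the linear problem in $\mathcal C([0,T];L^2)\cap L^\infty_{loc}(]0,T];L^\infty(B_1))$, and identifying it with $u$ gives the claim.

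First I would prove the bound on $a$. Fix $t\in[0,T]$: if $u(t)=0$ there is nothing to do, and otherwise $A:=\|u(t)\|_{\mathcal L(B_1)}\in\big(0,\tfrac{1}{1+\varepsilon}\big]$, so by the characterization \eqref{norm1} of the Orlicz norm the value $\lambda=\tfrac{1}{1+\varepsilon}\ge A$ is admissible, i.e.
$$
\int_{B_1}\Big(e^{(1+\varepsilon)^2\,u(t,x)^2}-1\Big)\,dx\le 1 .
$$
I would then take $\sigma:=1+\varepsilon$, so $1<\sigma\le(1+\varepsilon)^2$, and use the elementary inequality $(e^s-1)^{\sigma}\le e^{\sigma s}-1$ for $s\ge0$, $\sigma\ge1$ (it holds since $h(s):=e^{\sigma s}-1-(e^s-1)^{\sigma}$ satisfies $h(0)=0$ and $h'(s)=\sigma e^{s}\big(e^{(\sigma-1)s}-(e^s-1)^{\sigma-1}\big)\ge0$). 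Applying it with $s=u(t,x)^2$ and integrating over $B_1$ gives
$$
\|a(t)\|_{L^\sigma(B_1)}^{\sigma}=\int_{B_1}\big(e^{u(t)^2}-1\big)^{\sigma}dx\le\int_{B_1}\big(e^{\sigma u(t)^2}-1\big)dx\le\int_{B_1}\big(e^{(1+\varepsilon)^2 u(t)^2}-1\big)dx\le 1 ,
$$
whence $a\in L^\infty([0,T];L^\sigma(B_1))$ with $\sigma=1+\varepsilon>1$.

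Next I would identify $u$ with the Brezis--Cazenave solution. Being a solution of \eqref{eq2}, $u$ obeys Duhamel's formula $u(t)=e^{t\Delta}u_0+\int_0^t e^{(t-s)\Delta}f(u(s))\,ds$. Since $u\in L^\infty([0,T];L^q(B_1))$ for every $q<\infty$ (Proposition \ref{or1}) and $a\in L^\infty_t L^\sigma$, H\"older's inequality gives $f(u)=a\,u\in L^\infty([0,T];L^{p}(B_1))$ with $\frac{1}{p}=\frac{1}{\sigma}+\frac{1}{q}<1$ for $q$ large, hence for some $p>1$; combined with the smoothing estimates of Lemma \ref{smw} and the strong continuity of the heat semigroup on $L^2(B_1)$, this shows $u\in\mathcal C([0,T];L^2(B_1))$. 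Finally, because $1/\sigma<1$ the map $w\mapsto\int_0^t e^{(t-s)\Delta}a(s)w(s)\,ds$ is a contraction on $\mathcal C([0,\tau];L^2(B_1))$ for $\tau$ small, so the linear equation with potential $a$ has at most one solution in $\mathcal C([0,T];L^2(B_1))$; therefore $u$ coincides with the solution of Theorem \ref{BrCaz}, and in particular $u\in L^\infty_{loc}(]0,T];L^\infty(B_1))$.

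The hard part is the second step: turning the Orlicz smallness $\|u\|_{\mathcal L}\le\frac{1}{1+\varepsilon}<1$ into a genuine $L^\sigma$ bound with $\sigma>1$ on the potential $e^{u^2}-1$. This is precisely the threshold separating the Ni--Sacks type failure of regularization, where the potential lies merely in $L^1$, from the Brezis--Cazenave smoothing, and it explains why the smallness hypothesis cannot be dropped. The other ingredients --- the elementary exponential inequality, H\"older's inequality, and the identification by uniqueness --- are routine.
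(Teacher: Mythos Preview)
Your strategy is exactly the paper's: rewrite \eqref{eq2} as a linear heat equation with potential $a=e^{u^2}-1$, use the Orlicz smallness to place $a\in L^\infty_tL^{1+\varepsilon}$, invoke Theorem~\ref{BrCaz}, and identify $u$ with the Brezis--Cazenave solution. Your derivation of the $L^{1+\varepsilon}$ bound via $(e^s-1)^\sigma\le e^{\sigma s}-1$ is in fact more explicit than the paper's one-line justification.

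There is, however, a gap in the identification step when $\varepsilon<1$. Your contraction claim in $\mathcal C([0,\tau];L^2)$ needs $\|e^{(t-s)\Delta}(aw)\|_{L^2}$ to be controlled via Lemma~\ref{smw}, which requires $aw\in L^p$ for some $1\le p\le 2$. But H\"older with $a\in L^{1+\varepsilon}$ and $w\in L^2$ only gives $\tfrac1p=\tfrac1{1+\varepsilon}+\tfrac12$, and this exceeds $1$ as soon as $\varepsilon<1$. The inequality ``$1/\sigma<1$'' you invoke is the integrability of the time kernel $(t-s)^{-1/\sigma}$, not the spatial H\"older constraint $\tfrac1\sigma+\tfrac1r\le 1$. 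The paper handles this by choosing $r\ge 1+\tfrac1\varepsilon=(1+\varepsilon)'$, so that $aw\in L^1$ and the smoothing exponent $1-\tfrac1r<1$ remains integrable; since $\mathcal L(B_1)\hookrightarrow L^r(B_1)$ for every finite $r\ge 2$ (Proposition~\ref{or1}), both $u_0$ and $u$ lie in this larger $L^r$ and the identification goes through. Replacing $r=2$ by any $r\ge 1+1/\varepsilon$ in your argument fixes the issue.
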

\begin{proof}
It is clear that $\|{\rm e}^{u^2}-1\|_{L^{\infty}((0,T),L^{1+\varepsilon})}\leq 1$ because
$$\int_{\R^2}({\rm e}^{(1+\varepsilon)u^2}-1)dx\leq\int_{\R^2}({\rm e}^{(\frac{u}{\|u\|_T})^2}-1)dx\leq 1.$$
Now, applying Theorem A.1 in \cite{Br} (see Theorem \ref{BrCaz}) in choosing $\sigma =1+\varepsilon$, $r\geq 1+\frac{1}{\varepsilon}$ and $a:={\rm e}^{u^2}-1$, there exists a unique $v\in L^{\infty}([0,T]; L^r)$ solution to \eqref{eq2} with data $u_0$. Since ${\mathcal C}([0,T];\mathcal L)\subset L^{\infty}([0,T]; L^r)$, then $u=v$.
\end{proof}
Now we prove the nonuniqueness result given by Theorem \ref{MainOrlicz}.
\begin{proof}[Proof of Theorem \ref{MainOrlicz}]
Let $Q$, given by Theorem \ref{Singular} to be a singular solution to the elliptic stationary problem associated to \eqref{eq2}. Then, by Lemma \ref{aj} there exists $u\in L^{\infty}([0,T]; \mathcal L(B_1))$ a solution to \eqref{eq1R'}. Moreover, applying Proposition \ref{smm} to the second part of $u$ (see decomposition of $u$ in the proof of Lemma \ref{aj}) which is solution to \eqref{eq2R}, we conclude that $u\in L^{\infty}([0,T];L^{\infty}(B_1))$.
Then, the problem \eqref{eq1R'}
has two different solutions, $v(t):=Q\in \mathcal C([0,\infty);\mathcal L)$ and another solution $u\in L^{\infty}([0,T];L^{\infty})$. But $\displaystyle\lim_{x\rightarrow 0}v(x)=\infty$, thus
$$u\neq v.$$
\end{proof}
%%%%%%%%%%%%%%%%%%%%%%%%%%%%%%%%%%%%%%%%%%%%%%%%%%%%%%%%%%%%%

%%%%%%%%%%%%%%%%%%%%%%%%%%%%%%%%%%%%%%%%%%%%%%%%%%%%%%%%%%%%%

\end{document}